\definecolor{myblue}{HTML}{4245A3}
\definecolor{mypastel}{HTML}{1C7AA2}
\definecolor{myorange}{HTML}{FF8726}
\definecolor{myred}{HTML}{D75341}
\theoremstyle{definition}
\newtheorem{defn}{Definition}[section]
\newtheorem{lem}[defn]{Lemma}
\newtheorem{prop}[defn]{Proposition}
\newtheorem{conject}[defn]{Conjecture}
\newtheorem*{conject*}{Conjecture}
\newtheorem*{16piconjecture}{$ 16\pi $-Conjecture}
\theoremstyle{plain}
\newtheorem{theorem}[defn]{Theorem}
\newtheorem*{theorem*}{Theorem}
\newtheorem{thmCite}{Theorem}
\theoremstyle{remark}
\newtheorem{rem}[defn]{Remark}
\newtheorem{openq}[defn]{Question}
\newcommand{\com}[1]{}
\newcommand{\N}{\mathbb{N}}
\newcommand{\Z}{\mathbb{Z}}
\newcommand{\R}{\mathbb{R}}
\renewcommand{\S}{\mathbb{S}}
\newcommand{\eps}{\varepsilon}
\newcommand{\abs}[1]{\left\lvert#1\right\rvert}
\newcommand{\norm}[1]{\left\lVert#1\right\rVert}
\DeclareMathOperator\Imm{Imm}
\DeclareMathOperator\CatSph{CatSph}
\DeclareMathOperator\Ann{Ann}
\DeclareMathOperator{\arccosh}{arccosh}
\title{The Willmore Energy Landscape of Spheres 
	and\\ Avoidable Singularities of the Willmore Flow}
\author{Elena M\"ader-Baumdicker\footnote{maeder-baumdicker@mathematik.tu-darmstadt.de,\newline ORCID: 0000-0001-8125-8799,\newline Technical University Darmstadt, Department of Mathematics, Schlossgartenstr.\ 7, 64289 Darmstadt, Germany}, Jona Seidel\footnote{seidel@mathematik.tu-darmstadt.de,\newline ORCID: 0009-0000-1282-8209,\newline Technical University Darmstadt, Department of Mathematics, Schlossgartenstr.\ 7, 64289 Darmstadt, Germany }}
\date{}
\begin{document}

\maketitle

\begin{abstract}
	
	We study the sublevel sets of the Willmore energy on the space of smoothly immersed $ 2 $-spheres in Euclidean $ 3 $-space. We show that the subset of immersions with energy at most $ 12\pi $ consists of four regular homotopy classes.
	Moreover, we show that in certain regular homotopy classes, all singularities of the Willmore flow are avoidable---that is, the initial surface admits a regular homotopy to a round sphere whose Willmore energy does not exceed that of the initial surface. This yields a classification of initial surfaces with energy at most $ 12\pi $ that lead to unavoidable singularities.
	As a further consequence, we obtain an extension of the Li--Yau inequality at $ 12\pi $ for a large class of immersed spheres without triple points.
	To prove these results, we glue together different instances of the Willmore flow and employ an invariant for triple-point-free immersed spheres.\footnote{Mathematics Subject Classification \href{https://mathscinet.ams.org/mathscinet/search/mscdoc.html?code=53C42,53E40,57R42}{53C42; 53E40, 57R42},\newline Keywords: 
		Regular Homotopy Class, Willmore Flow, Singularity, Surgery
	}

\end{abstract}

\newpage
\tableofcontents

\section{Introduction and Main Results}
\subsection{The Willmore Energy}
For a closed surface $ \Sigma $ and an immersion $ f:\Sigma\to\R^3 $ into Euclidean space, the Willmore energy is given by
\begin{equation*}
	W(f) := \int_\Sigma H^2 d\mu_f,
\end{equation*}
where $ \mu_f $ is the area measure induced by $ f $ and $ H=\frac12 \left(\kappa_1 + \kappa_2\right) $ is the mean curvature of $ f $.
In this paper we concentrate on immersed spheres, that is, $ \Sigma=\S^2 $.

The Willmore energy is of considerable analytic interest: its associated Euler–Lagrange equations are a system of quasi-linear, fourth-order partial differential equations. The search for minimizers of the Willmore energy \cite{LY,PS_WillmoreSurfs,Simon_W,BK_ExistenceWillmore,R_AnalysisWillmore,BR_Quant,MN_WillmoreConjecture}, as well as the study of the corresponding $ L^2 $-gradient flow---the Willmore flow \cite{KS_Gradient,KS_SmallInitial,Simonett_WillmoreFlow,KS_Remov,LN,LS_Wtori,DMSS_WillmoreFlowTOR,DMRS_WillmoreFlowToriFixedConformal,PR_Param}---have been the focus of extensive recent study.
From a geometric perspective, the Willmore energy is remarkable in being invariant under scaling and conformal transformations; in fact, up to a topological constant it is the unique global invariant with this full conformal invariance \cite{MN_Conformal}.
It also serves as a bending energy and a measure of complexity as it penalizes self-intersections and deviations from round spheres, making it relevant in applications across biology, physics, and computer graphics \cite{Helfrich,M_Germain,Koerber_HawkingMass,BKPAL_Mesh}. We refer the reader to \cite{KS_Survey,R_WillmoreNotes,MN_WillmoreConjectureSurvey} for an overview of the Willmore energy and its applications.

In the following section, we outline the approach taken in this paper and relate it to known results. The main contribution of this paper is stated in Section~\ref{sec_mainResults} as Theorem~\ref{thm_globalSing}, with further results and applications in Sections~\ref{sec_avoidableSingularities}, \ref{sec_extensionLiYau}, \ref{sec_optimalSE}, and~\ref{sec_gen16pi}. The remaining sections are mostly dedicated to the proof of Theorem~\ref{thm_globalSing}.

\subsection{The Willmore Energy Landscape}
An energy landscape is the graph of an energy function over a set of admissible objects, in this case the Willmore functional over the set $ \Imm(\S^2,\R^3) $ of smoothly immersed $ 2 $-spheres in Euclidean $ 3 $-space. 
Usually, a landscape is described by its critical points and their indices. The Willmore energy landscape of spheres has been thoroughly studied from this Morse-theoretic viewpoint.
All its immersed critical points, the \textit{Willmore spheres}, are classified \cite{Bryant_Dual}. They are inverted complete minimal surfaces with finite total curvature and embedded planar ends and appear at quantized energy levels $ 4\pi k $ for some $ k\in\N $. The absolute minima at $ 4\pi $ are the \textit{round spheres}, i.e.\ reparametrizations, scalings, and translations of the standard embedding $ e:\S^2\to\R^3 $. After $ 4\pi $ the next Willmore spheres appear at $ 16\pi $. They form a $ 4 $-parameter family up to reparametrization and conformal transformation \cite{Bryant_Dual} and their indices are equal to one \cite{HMB,HKMB}.
Beyond smoothly immersed Willmore spheres, branched Willmore spheres have been the subject of recent research \cite{KL_W22conformal,CL_BranchedWillmore,LN,M_BWSIndex,MR_BWS_S3S4,Martino_BWS} (with differing notions of ``branched'', see \cite{Martino_BWS}).
These results all provide \textit{local} information about the energy landscape.

The main object of this paper is to obtain insight into the \textit{global} topology of the Willmore energy landscape. 
\com{About the underlying space of immersed spheres it is known that it contains one regular homotopy class \cite{Smale_SE} and the first fundamental group modulo rotation is the integers \cite{MB_SE}. We want to describe the topology of the Willmore energy landscape.}
One approach is to cut off the landscape at certain energy levels and analyze the structure of the resulting ``valleys'', i.e.\ the regular homotopy classes of the sublevel sets 
\begin{equation*}
	I(\omega):=\left\{ f\in \Imm(\S^2,\R^3) \ \mid \ W(f) \leq \omega  \right\},
\end{equation*}
for $ \omega\geq 4\pi $. A purely topological result by Smale shows that the whole space $ I(\infty)=\Imm(\S^2,\R^3) $ has one regular homotopy class \cite{Smale_SE}. But to tackle the question for finite energy values, one has to combine tools from topology and geometric analysis.

For $ \omega\leq 8\pi $ such tools are available and they imply
\begin{equation}\label{eq_I8pihas2RHC}
	I(\omega) \text{ has two regular homotopy classes for } \omega\in[4\pi,8\pi].
\end{equation} 
To prove this, we first employ the Willmore flow to get an upper bound on the number of regular homotopy classes of $ I(\omega) $. Kuwert and Schätzle proved that up to $ 8\pi $ the Willmore flow behaves well.
\begin{thmCite}{(\cite{KS_Remov}).}\label{thm_KS8pi}
	Let $ f_0:\S^2\to\R^3 $ be a smooth immersion of a sphere with Willmore energy $$ W(f_0)\leq 8\pi. $$ Then, the Willmore flow with initial data $ f_0 $ exists smoothly for all times and converges to a round sphere.
\end{thmCite}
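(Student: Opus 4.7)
The plan is to establish global existence by ruling out finite-time singularities, and then to identify the limit as a round sphere via subconvergence combined with the classification of Willmore spheres. First, by standard parabolic theory the fourth-order quasilinear Willmore flow admits a unique smooth short-time solution $f_t$ on some maximal interval $[0,T)$. Since it is the $L^2$-gradient flow of $W$, the energy is monotonically nonincreasing, so $W(f_t)\leq W(f_0)\leq 8\pi$ throughout the evolution.

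Next I would invoke an $\varepsilon$-regularity principle for the Willmore flow: there exists a universal constant $\varepsilon_0>0$ such that, whenever the local $L^2$ concentration of the second fundamental form on every ball of a fixed radius stays below $\varepsilon_0$ on a time interval, one obtains uniform higher-derivative estimates in the interior. Consequently, if $T<\infty$ and the flow does not extend smoothly past $T$, then curvature must concentrate at some spatial point $x_0$, and parabolic rescaling around $(x_0,T)$ yields a nontrivial blow-up limit: a properly immersed, complete Willmore surface $\Sigma_\infty\subset\R^3$ of finite total squared second fundamental form, with at least $\varepsilon_0$ of curvature concentrated near the origin.

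The main obstacle is to show that any such blow-up must carry at least $8\pi$ of Willmore energy, contradicting the bound $W(f_t)\leq 8\pi$ via lower semicontinuity of $W$ under the blow-up procedure. The strategy is to invert $\Sigma_\infty$ at an appropriate point so that its image is a closed, possibly branched, immersed surface whose Willmore energy is controlled by the curvature concentrated in $\Sigma_\infty$, and then to combine this with the Li--Yau multiplicity inequality $W(f)\geq 4\pi k$, where $k$ is the maximal multiplicity of $f$. Applied to $f_0$ with $W(f_0)<8\pi$, this forces $f_0$---and hence each $f_t$---to be an embedding, while on the inverted bubble it forces the concentrated energy to be at least $8\pi$. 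The borderline case $W(f_0)=8\pi$ is more delicate, since a priori a bubble could absorb all available energy; here one argues that strict decrease of $W$ on $(0,T)$ together with the positive-measure portion of $\Sigma_\infty$ at infinity leaves insufficient energy for a nontrivial bubble.

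With global existence and uniform curvature bounds established, a standard compactness argument produces, modulo a conformal renormalization, a smooth subsequential limit that is a smoothly immersed Willmore sphere of energy at most $8\pi$. By Bryant's classification, the only Willmore spheres at or below this threshold are the round ones at $4\pi$ (the next critical level being $16\pi$), so the limit is a round sphere. Full convergence to a specific round sphere then follows from the gradient-flow dynamics near this non-degenerate absolute minimum via a \L{}ojasiewicz--Simon type inequality, together with conformal invariance arguments to prevent escape to infinity.
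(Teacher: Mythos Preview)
The paper does not give its own proof of this statement: Theorem~\ref{thm_KS8pi} is stated as a cited result of Kuwert and Sch\"atzle \cite{KS_Remov} and is used as a black box throughout (e.g.\ in the proof of Proposition~\ref{prop_gluingA} and Theorem~\ref{thm_globalSing}). There is therefore nothing in the present paper to compare your argument against.

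That said, your outline is a fair high-level summary of the Kuwert--Sch\"atzle strategy developed across \cite{KS_Gradient,KS_SmallInitial,KS_Remov}. A few points would need sharpening if you were to write this out: the blow-up limit is not merely a Willmore surface but can be shown to be a (nonflat) complete minimal surface with finite total curvature, and the key step is precisely the removability-of-point-singularities theorem of \cite{KS_Remov}, which lets one invert and close up to a compact Willmore sphere and then invoke Bryant. Your treatment of the borderline $W(f_0)=8\pi$ is too vague as written; the actual argument uses that the flow strictly decreases energy for $t>0$ unless $f_0$ is already Willmore (hence a round sphere by Bryant), so one may replace $f_0$ by $f_{t_0}$ with $W(f_{t_0})<8\pi$ and reduce to the strict case. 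Finally, full convergence in \cite{KS_Remov} is obtained not via a \L{}ojasiewicz--Simon inequality but by combining the blow-up/blow-down analysis with the classification and an estimate on the tracefree second fundamental form; the \L{}ojasiewicz--Simon route is a later, alternative approach.
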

Recall that all orientation preserving diffeomorphisms of $ \S^2 $ are homotopic to each other \cite{Smale_DiffS2} and thus Theorem~\ref{thm_KS8pi} implies that $ I(8\pi) $ has at most two regular homotopy classes, since every $ f\in I(8\pi) $ can be joined to a round sphere of either orientation by the Willmore flow.

Second, a lower bound on the number of regular homotopy classes of $ I(\omega) $ arises from a topological study of self-intersections carried out by Max and Banchoff \cite{MB_SE} (with alternative proofs \cite{Hughes_SE,Gor_Local}). We say that a regular homotopy $ f:\S^2\times[0,1]\to\R^3 $ has an \textit{$ n $-tuple point} if there exists $ t\in [0,1] $ and $ x\in \R^3 $ such that $ \left(f(\cdot,t)\right)^{-1}(\{x\}) $ contains at least $ n $ points. 
\begin{thmCite}{(\cite{MB_SE})}\label{thm_MBquad}
	Every regular homotopy between two round spheres of opposite orientation has a quadruple point.
\end{thmCite}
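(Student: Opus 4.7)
The plan is to construct a $\mathbb{Z}/2$-valued invariant $\mu$ on the space of self-transverse immersions $\S^2 \to \R^3$ that is preserved under generic regular homotopies except at quadruple-point events, and that takes different values on two round spheres of opposite orientation. By standard transversality arguments, any regular homotopy between round spheres may be $C^\infty$-approximated (with the round-sphere endpoints preserved) by a homotopy that is \emph{generic} in the sense that, for all but finitely many times $t$, $f_t$ is self-transverse with smooth double curves and isolated triple points, and at each exceptional time exactly one codimension-$1$ singular event occurs: either (i) a tangential two-sheet event, creating or annihilating a double curve, (ii) a tangential three-sheet event, creating or annihilating a pair of triple points, or (iii) a transverse four-sheet event, i.e.\ a quadruple point. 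Since isolated quadruple points cannot be removed by a small perturbation, it suffices to show that every generic regular homotopy between two round spheres of opposite orientation contains at least one event of type (iii).

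Fixing the outward round sphere $e:\S^2\to\R^3$ as a reference, I would define
\[
\mu(f) \;:=\; \#\{\text{quadruple-point events of }H\} \pmod{2},
\]
where $H$ is any generic regular homotopy from $e$ to $f$; such an $H$ exists by Smale's theorem. The crucial point is path-independence of $\mu$. Given two admissible homotopies $H_0$ and $H_1$ from $e$ to $f$, I would connect them by a generic two-parameter family $H_{(s,t)}$. The locus in the parameter square $[0,1]^2$ where a quadruple-point event occurs is then a properly embedded $1$-manifold whose boundary lies either on $\{s=0\}\cup\{s=1\}$ or at codimension-$2$ strata of the family (five-sheet collisions, degenerate four-sheet configurations, and the like). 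A case-by-case parity analysis at each codim-$2$ stratum shows that the number of boundary endpoints contributed on each vertical side of the square is even, so the parity count is the same along $H_0$ and along $H_1$.

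To separate the two round spheres, I would calibrate $\mu$ using an explicit sphere eversion. Morin's eversion provides a concrete regular homotopy between $e$ and its orientation-reverse $\bar e$ containing an odd number of quadruple-point configurations, so $\mu(\bar e)=1\neq 0=\mu(e)$. Since $\mu$ is preserved at every codim-$1$ event other than type (iii), any generic regular homotopy from $e$ to $\bar e$ must contain an odd---hence positive---number of quadruple-point events, and this conclusion passes to arbitrary regular homotopies by the transversality approximation.

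The principal technical obstacle is the well-definedness of $\mu$: it relies on a complete classification of the codimension-$2$ singular strata of smooth families of immersed surfaces together with a mod-$2$ parity check at each one. An attractive alternative would be to give an \emph{intrinsic} formula for $\mu$---say, a Banchoff-type expression combining the parity of the triple-point count with a correction involving the Euler characteristic of the source double-point set and the degree of the Gauss map---and to verify by direct local calculation that this quantity is preserved at events of types (i) and (ii) but flips at (iii). I would pursue this intrinsic route first, as it bypasses the codim-$2$ analysis entirely.
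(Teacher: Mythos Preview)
The paper does not prove this statement: it is quoted as a result of Max and Banchoff, with alternative proofs cited from Hughes and Goryunov, and used as a black box throughout. So there is no proof in the paper to compare against.

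Your overall strategy is sound and is precisely the shape of the existing proofs. The intrinsic route you propose at the end is essentially Max and Banchoff's original argument: they write down an explicit $\Z/2$-valued quantity for a self-transverse immersion (built from triple-point and normal-degree data), verify by local models that it is unchanged at two- and three-sheet tangency events but flips at a transverse quadruple point, and then evaluate it directly on the two round spheres. This bypasses both the codimension-$2$ analysis and the Morin calibration. Your path-counting definition of $\mu$ is the alternative pursued by Hughes and by Goryunov; it works, but as you correctly flag, the well-definedness step requires a full enumeration of codimension-$2$ multigerms of two-parameter families together with a parity check at each, which is substantial. One further caution on your calibration step: the claim that Morin's eversion has an odd number of quadruple-point events is true (there is exactly one, at the halfway model), but this is itself a nontrivial geometric fact about that particular construction and should be cited rather than asserted; the intrinsic formula avoids this issue since one evaluates it directly on embedded spheres.
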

The decisive link between the topology of self-intersections and the Willmore energy is the Li--Yau inequality.
\begin{thmCite}{(\cite{LY})}\label{thm_LYineq}
	If a smooth immersion $ f:\Sigma\to\R^k $ of a closed surface has an $ n $-tuple point, then $$ W(f)\geq 4\pi n. $$
\end{thmCite}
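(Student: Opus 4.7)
The plan is to prove the stronger pointwise density inequality $ \theta^2(f,p) \le W(f)/(4\pi) $ for every $ p\in\R^k $, where $ \theta^2(f,p) := \lim_{\sigma \downarrow 0} \mu_f(f^{-1}(B_\sigma(p))) / (\pi \sigma^2) $ is the two-dimensional area density of the image at $ p $. Since $ f $ is a smooth immersion, each preimage $ x_i\in f^{-1}(\{p\}) $ contributes a unit tangent plane at $ p $ to the limit, so the density at an $ n $-tuple point satisfies $ \theta^2(f,p) \ge n $. Combining these two inequalities yields $ W(f) \ge 4\pi n $.

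The key analytic tool is Simon's monotonicity formula. The first step is to establish that for every $ 0 < \sigma < \rho $ one has
\begin{equation*}
	\frac{\mu_f(f^{-1}(B_\sigma(p)))}{\pi \sigma^2} \;\le\; \frac{\mu_f(f^{-1}(B_\rho(p)))}{\pi \rho^2} \;+\; \frac{1}{4\pi}\int_{f^{-1}(B_\rho(p)\setminus B_\sigma(p))} H^2 \, d\mu_f.
\end{equation*}
This comes from testing the first variation of area against the radial vector field $ (f-p)/|f-p|^2 $ on $ \Sigma $, integrating from $ \sigma $ to $ \rho $, and absorbing the resulting cross term via the elementary pointwise bound $ 2 H \sprod{(f-p)^{\perp},\nu}/|f-p|^2 \le H^2 + |(f-p)^\perp|^2/|f-p|^4 $. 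The derivation is codimension-free and therefore works for an arbitrary target dimension $ k $.

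In the second step, I would pass to the limits $ \sigma \downarrow 0 $ and $ \rho \uparrow \infty $. Near each preimage $ x_i $, smoothness of $ f $ implies that $ f(\Sigma) $ is $ C^1 $-close to the tangent plane $ df_{x_i}(T_{x_i}\Sigma) $, so the area ratio from a small disjoint neighborhood of $ x_i $ tends to $ 1 $; summing over all $ n $ preimages gives $ \liminf_{\sigma\downarrow 0} \mu_f(f^{-1}(B_\sigma(p)))/(\pi \sigma^2) \ge n $. For $ \rho $ beyond the diameter of $ f(\Sigma) $, the numerator $ \mu_f(f^{-1}(B_\rho(p))) $ stabilizes at the total area while the denominator grows like $ \rho^2 $, so this ratio tends to $ 0 $. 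Meanwhile, $ \int_{f^{-1}(B_\rho(p))} H^2 \, d\mu_f $ increases monotonically to $ W(f) $. Plugging these into the displayed inequality gives $ n \le W(f)/(4\pi) $, as desired.

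The main technical hurdle is deriving the monotonicity identity with the correct constant and non-negative error term: the algebraic manipulation of the cross term $ \sprod{H\nu,(f-p)^\perp}/|f-p|^2 $ must yield exactly the prefactor $ 1/(4\pi) $ on the Willmore integrand under the convention $ H = \tfrac12(\kappa_1+\kappa_2) $ used in this paper. Once this is in place, the limit analysis near the $ n $ preimages is routine local geometry, and compactness of $ \S^2 $ handles the limit at infinity.
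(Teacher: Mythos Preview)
The paper does not contain a proof of this statement: it is Theorem~C, quoted verbatim from \cite{LY} and used as a black box throughout. There is therefore nothing to compare against on the paper's side.

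Your outline is correct and is essentially the standard modern proof of the Li--Yau inequality via Simon's monotonicity formula. The displayed inequality (after multiplying through by $\pi$) is the usual consequence of the identity
\[
\sigma^{-2}\mu_f\bigl(f^{-1}(B_\sigma(p))\bigr) + \int_{f^{-1}(B_\rho\setminus B_\sigma)}\Bigl|\tfrac{H\nu}{2}+\tfrac{(f-p)^\perp}{|f-p|^2}\Bigr|^2\,d\mu_f
= \rho^{-2}\mu_f\bigl(f^{-1}(B_\rho(p))\bigr) + \tfrac14\int_{f^{-1}(B_\rho\setminus B_\sigma)} H^2\,d\mu_f,
\]
valid in any codimension with the convention $H=\tfrac12(\kappa_1+\kappa_2)$ used here, and the limits $\sigma\downarrow 0$, $\rho\uparrow\infty$ behave exactly as you describe. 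One minor point: Li and Yau's original argument in \cite{LY} is phrased in terms of the conformal area and a comparison with round spheres rather than Simon's monotonicity, but the underlying integral identity is the same and your route is the one found in most contemporary references (e.g.\ Kuwert--Sch\"atzle). Nothing is missing from your sketch beyond the routine computation of the first-variation identity that produces the displayed formula.
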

As a consequence, every regular homotopy between two round spheres of opposite orientation must pass through a time at which the immersion has Willmore energy at least $16\pi$. Thus, round spheres of opposite orientation cannot lie in the same regular homotopy class in $ I(\omega) $ for $ \omega<16\pi $. Consequently, for $ \omega\in[4\pi,16\pi) $ the sublevel set $ I(\omega) $ has at least two regular homotopy classes. With Theorem~\ref{thm_KS8pi} we arrive at \eqref{eq_I8pihas2RHC}.

The aim of this paper is to extend this approach to $ \omega\in(8\pi,12\pi] $, where Theorem~\ref{thm_KS8pi} is not available because singularities of the Willmore flow might prevent the convergence to a round sphere. 

\subsection{Main Results}\label{sec_mainResults}
For any immersion $ f:\S^2\to\R^3 $, we denote by $ -f $ the immersion defined by $ (-f)(x):=-f(x) $.
\begin{defn}\label{def_J}
	Consider a smooth immersion $ j:\S^2\to\R^3 $ obtained by rotating the curve displayed in Figure~\ref{fig_sketchJ}. Let $ J $ be the set of all smooth immersions $ f:\S^2\to\R^3 $ that admit a regular homotopy without triple points to $ j $ or $ -j $.
\end{defn}
\begin{figure}[H]
	\centering
	\vspace*{-0.05\textwidth}
	\includegraphics[align=c,width=0.49\textwidth]{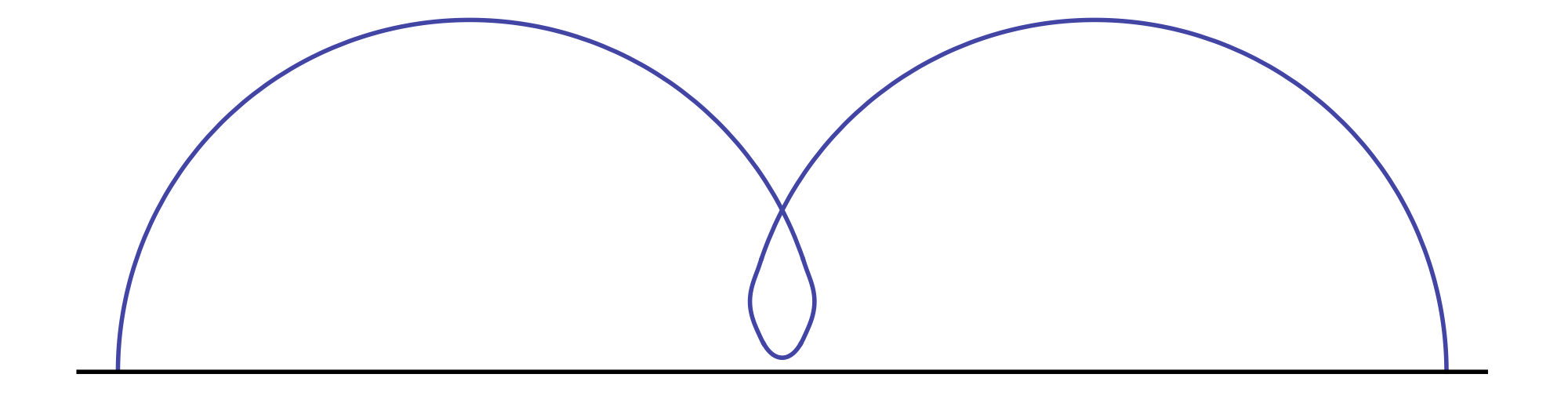}
	\includegraphics[align=c,width=0.49\textwidth]{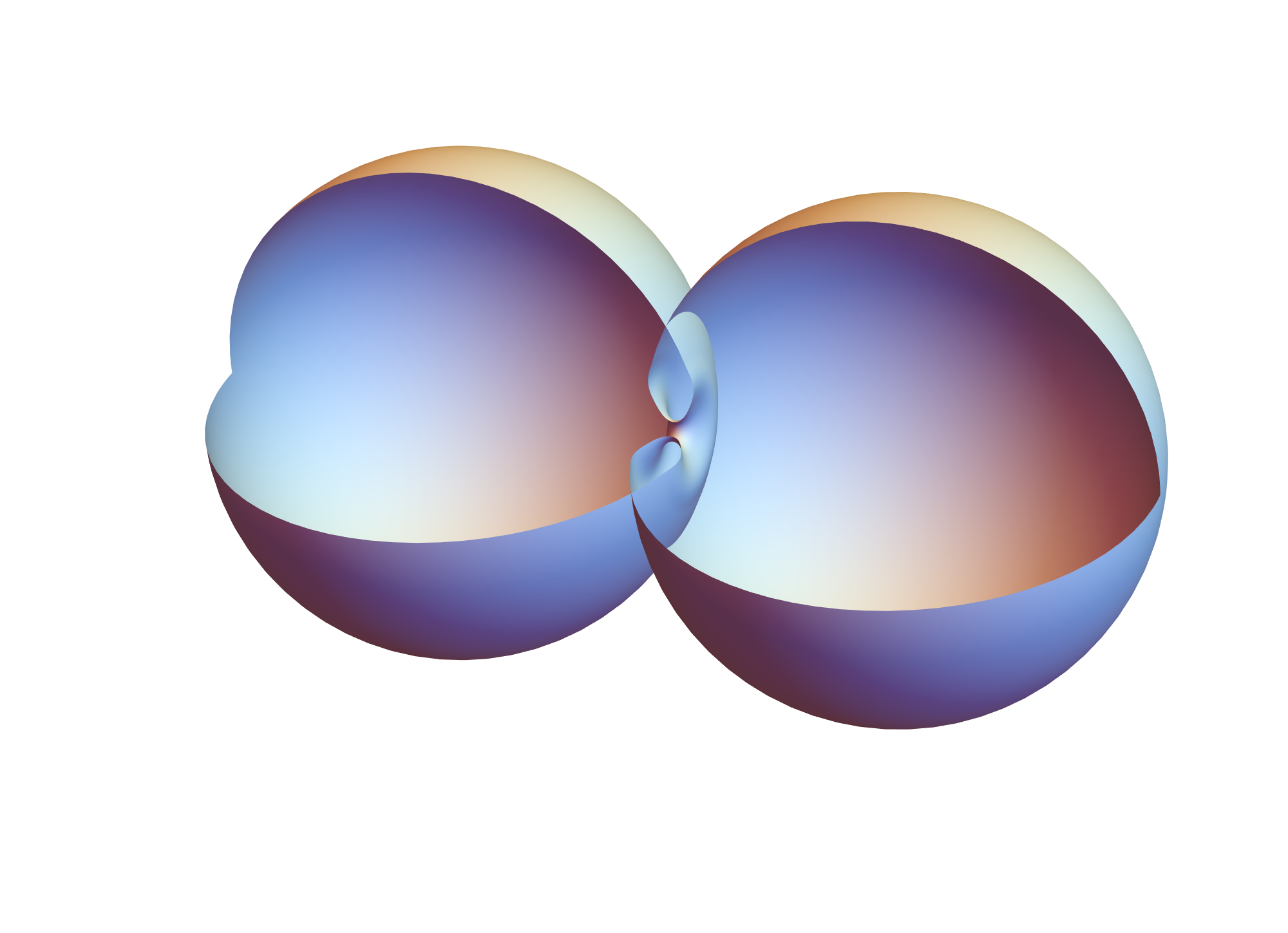}
	\vspace*{-0.05\textwidth}
	\caption{An immersion in the family $ J $ and its generating curve.}\label{fig_sketchJ}
\end{figure}
In \cite{MS_Num,Blatt}, surfaces of revolution of this kind serve as initial surfaces that lead to singularities. However, surfaces in $ J $ are not necessarily rotationally symmetric and may possess additional self-intersections. In fact, the definition of $ J $ is rather implicit, but note that for any given smooth immersion $ f:\S^2\to\R^3 $, an invariant devised in \cite{S_TriplePoints} gives an explicit way of checking whether $ f $ belongs to $ J $.

We show that $ J $ defines model surfaces which can be reached by regular homotopies of low Willmore energy whenever the Willmore flow fails to converge to a round sphere. The proof relies crucially on the classification of blow-ups at the singular time by Lamm and Nguyen \cite{LN}. The main contribution of this paper is the following (see Section~\ref{sec_surgery} for the proof). For homotopies $ H:\S^2\times I \to\R^3 $, we denote $ H_t:=H(\cdot,t) $.
\begin{restatable}{theorem}{globalSing}\label{thm_globalSing}
	If a smooth immersion $ f:\S^2\to\R^3 $ has Willmore energy at most $ 12\pi $, then there exists a regular homotopy $ H: \S^2 \times [0,1]\to\R^3 $ which satisfies $ H_0=f $, $ W(H_t)<W(f) $ for all $ t\in(0,1] $ and $ H_1 $ is either a round sphere or equal to some $ j\in J $.
\end{restatable}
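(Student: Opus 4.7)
The plan is to run the Willmore flow from $f$ and branch on whether it develops a singularity. Since the only Willmore spheres of energy at most $12\pi$ are the round ones (a consequence of Bryant's classification), the Willmore energy is strictly decreasing along the flow away from the round locus. If the flow exists smoothly for all time and converges to a round sphere, taking $H$ to be a time-reparametrization of the flow already yields $H_0=f$, $W(H_t)<W(f)$ for $t\in(0,1]$, and $H_1$ round; the case where $f$ is itself a round sphere is handled by a (constant or trivially modified) homotopy.

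Suppose instead the flow reaches a first singular time $T<\infty$. I would invoke the Lamm--Nguyen blow-up classification at $T$. Because $W(f)\le 12\pi$, Theorem~\ref{thm_LYineq} forbids quadruple points along the flow, and the amount of energy that can concentrate at a singular point is bounded by $12\pi-4\pi = 8\pi$. Combined with Bryant's classification of Willmore spheres, this severely restricts the possible blow-up profiles to a short list of ``neck-pinch'' or ``cap-collapse'' models. Consequently, just before $T$ the surface looks, on a small scale near the singular point, like one of a few explicit standard geometries that separate cleanly from the remainder of the surface.

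Using this local description I would perform a \emph{surgery} before the singular time: excise the degenerating region on a small scale and replace it by a smooth round cap (or thin bridge) joining the rest of the surface without introducing new triple points. The surgery is realized as a compactly supported regular homotopy on a shrinking neighborhood of the singular point, with the replacement chosen so that the energy strictly drops by a definite amount controlled by the difference between the pre-surgery and post-surgery local energies. This produces a new immersion $\tilde f$ with $W(\tilde f)<W(f)$. If $W(\tilde f)\le 8\pi$, concatenating with the Willmore flow of $\tilde f$ (Theorem~\ref{thm_KS8pi}) completes $H$ with $H_1$ a round sphere; if not, we iterate, and since each surgery drops energy by a quantified amount the process terminates in finitely many steps.

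It remains to identify the terminal regular homotopy class. This is where the invariant of \cite{S_TriplePoints} on triple-point-free immersed spheres intervenes: after every surgery the new surface is triple-point-free (by the Li--Yau budget and the care taken in the surgery), so its invariant is defined, and comparing it to the invariants of the round sphere and of $J$ shows that the final surface either lies in the class of a round sphere or in that of $J$. In the latter case we end $H$ with an energy-decreasing regular homotopy to $j\in J$ by shrinking extraneous features. The main obstacle is producing the surgery as a genuine regular homotopy with \textbf{strictly} decreasing energy: the Lamm--Nguyen result is asymptotic, so one must turn the limiting picture into a concrete finite-time replacement whose energy balance is quantitative, and one must simultaneously check at each step that no triple points are created, so that the invariant from \cite{S_TriplePoints} remains available to classify the terminal class.
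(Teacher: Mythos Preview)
Your outline has the right opening move---run the flow and invoke Lamm--Nguyen---but it has a genuine gap in how the dichotomy ``round sphere vs.\ $j\in J$'' is produced, and your surgery is not the one that works.

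First, at energy $\le 12\pi$ the Lamm--Nguyen classification gives a \emph{single} blow-up model, the catenoid (not a list of neck-pinch/cap-collapse profiles), and the proof in the paper uses precisely its two-ended structure. Just before the singular time the surface is close to a catenoid on an annulus $C$; the complement $\S^2\setminus C$ consists of two disks $D_1,D_2$. Each $D_i$ is capped by a flat disk to form an immersed sphere; since the total energy is $<12\pi$ and each capped sphere has energy $\ge 4\pi$, both have energy $<8\pi$ and hence flow to round spheres by Theorem~\ref{thm_KS8pi}. These two flows are then glued back onto the catenoid piece at every time, with explicit $C^2$-estimates on the gluing region controlling the Willmore excess. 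The homotopy therefore terminates at an \emph{explicit} model: a catenoid with a round sphere attached at each end. Each attachment is of one of two combinatorial types (called $\alpha$ and $\beta$); a direct computation (Lemma~\ref{lem_catenoidSphereShrinking}) shows that whenever at least one end is of type $\beta$ one can shrink that sphere relative to the catenoid to push the total energy below $8\pi$, after which Theorem~\ref{thm_KS8pi} finishes to a round sphere. If both ends are of type $\alpha$, the model surface is, by construction, a member of $J$.

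Your proposal instead performs an unspecified single-cap replacement, iterates with a claimed ``definite energy drop,'' and then invokes the invariant of \cite{S_TriplePoints} to decide the terminal class. This is backwards: the invariant \emph{distinguishes} $J$ from round spheres (that is Theorem~\ref{thm_triplePoints}), but it does not manufacture a homotopy into $J$; you still need an explicit terminal surface to land on. You have not exhibited one, nor computed its invariant. The ``definite drop per surgery'' is also unjustified---there is no a priori energy gap to quote---and your assertion that surgeries create no triple points (so that the invariant stays defined) is unsupported. Finally, you restrict to $T<\infty$, whereas the argument must also cover flows that exist for all time but fail to converge. The key idea you are missing is that the catenoid's \emph{two} ends let you split the problem into two sub-$8\pi$ pieces in a single step and arrive at a concrete four-case picture (Figure~\ref{fig_catenoidSpheres}); the $\alpha/\beta$ case analysis then replaces your appeal to the invariant.
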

The immersion $ j $ in Theorem~\ref{thm_globalSing} can be taken to have Willmore energy arbitrarily close to $ 8\pi $.

An invariant introduced by the second author in \cite{S_TriplePoints} shows that the model surfaces $ J $ cannot be omitted in Theorem~\ref{thm_globalSing}. It relies on the structure of self-intersecting immersed spheres, similar to that revealed in Theorem~\ref{thm_MBquad}, though the invariant is obtained using entirely different methods.
\begin{thmCite}{(\cite[Theorem~1.7]{S_TriplePoints}).}\label{thm_triplePoints}
	Every regular homotopy joining some $ j\in J $ to an embedded sphere has triple points.
\end{thmCite}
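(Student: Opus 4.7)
The plan is to construct an integer- or $\Z/2$-valued invariant $\iota$, defined on the open subset of $\Imm(\S^2,\R^3)$ consisting of (generic) immersions without triple points, such that $\iota$ is locally constant on this open set---equivalently, invariant under regular homotopies that avoid triple points---and such that $\iota$ separates the embedded sphere from any $j\in J$. Given such an $\iota$, the theorem follows immediately: a triple-point-free regular homotopy joining $j$ to an embedded sphere would force $\iota(j)=\iota(\text{embedded})$, a contradiction.

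To set up the invariant, I would first analyze the local structure of the space of triple-point-free immersions. Generically, a one-parameter family of immersions intersects walls of codimension one in $\Imm(\S^2,\R^3)$. The complete list of generic codimension-one phenomena modifying the self-intersection set is known: (a) birth or death of a small isolated double circle, (b) swallowtail transitions that create or annihilate a pair of cusps on a double curve, and (c) transverse tangencies between two double curves (the surface analogue of a Reidemeister-II move); the appearance of a triple point is an additional, fourth type of codimension-one wall that we are explicitly forbidding. The invariant $\iota$ must therefore be designed to be unchanged across the walls (a), (b), (c).

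For the definition of $\iota$, I would exploit the fact that in the triple-point-free case, $f^{-1}(\Sigma_f)\subset\S^2$ is a disjoint union of smoothly embedded circles, each circle bounding disks on both sides of $\S^2$ by the Jordan curve theorem. The natural $\Z/2$-involution on $f^{-1}(\Sigma_f)$ coming from swapping sheets pairs these circles, and together with a coherent choice of local sheet ordering (or a framing coming from the normal of $f$), one can extract combinatorial data---typically counts of matched pairs, colorings of the disk regions, or mod-$2$ sums of linking numbers of the image double curves in $\R^3$---that can be checked by hand to be preserved across moves (a), (b), and (c), while being changed only when a triple point passes through. A clean way to organize this is to construct $\iota$ as a sum of local contributions at each connected component of $f^{-1}(\Sigma_f)$, calibrated so that the local contributions of the curves created/destroyed in each move cancel.

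The main obstacle, and the content most of the work will go into, is the correct calibration in step two: finding local weights so that the contributions across swallowtails and tangencies cancel exactly, while a triple-point crossing produces a nonzero contribution. Once this is achieved, the final step is easy: for an embedded sphere $\iota=0$ by vacuity, whereas for the rotational representative $j\in J$ the double curves are explicit circles coming from rotating the self-crossing of the generating curve of Figure~\ref{fig_sketchJ}, and a direct computation on this model yields $\iota(j)\neq 0$. Since $\iota$ is a regular-homotopy-without-triple-points invariant, this discrepancy completes the proof.
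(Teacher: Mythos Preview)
Your overall strategy---build an invariant of triple-point-free immersions that is constant along triple-point-free regular homotopies and separates $j$ from embedded spheres---matches the cited work exactly. But what you have written is a plan rather than a proof, and it contains one concrete error and one substantial gap.

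The error is in your list of codimension-one walls. For one-parameter families of \emph{immersions} (as opposed to arbitrary smooth maps) there are no swallowtail transitions: the double locus in the domain is always a smooth $1$-manifold, and its image under $f$ is an immersed curve, so no cusps ever appear. Inside the open set of triple-point-free immersions the only generic codimension-one event is a tangency of two sheets, whose effect on the double curve is a Morse surgery; your items (a) and (c) are precisely the two topological outcomes of such a surgery, while item (b) should be deleted. Any calibration scheme built on the wrong wall list is unlikely to come out correctly.

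The gap is that you do not actually construct the invariant. The candidates you mention in passing---mod-$2$ circle counts, linking numbers of the image double curves---are not shown to survive the tangency moves, nor to be nonzero on $j$; both checks are the heart of the matter, and neither is routine. For comparison, the invariant in the cited reference is considerably more structured than a single number: it takes values in $\Z^{\Z}$, with $F(\pm e)=e_{\pm 1}$, $F(j)=e_{\pm 3}$, and values such as $2e_3-e_1$ or $3e_{-1}-2e_1$ on other examples (see the discussion surrounding Proposition~\ref{prop_LiYauExtension} and Figure~\ref{fig_examplesExtendedLiYau}). This integer-indexed bookkeeping is a different mechanism from the linking-type quantity you propose, and producing such an object and verifying its invariance is where all of the actual work lies.
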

The Li--Yau inequality immediately gives $ W(H_t)\geq 12\pi $ for some time $ t $. This can be improved to a strict inequality (see Appendix~\ref{sec_appendixGlue}).
\begin{restatable}{cor}{corTriplePoints}\label{cor_triplePoints}
	For every regular homotopy $ H:\S^2\times [0,1] \to \R^3$ from some $ j\in J $ to an embedded sphere, there exists a time $ t\in[0,1] $ with $ W(H_t)>12\pi $.
\end{restatable}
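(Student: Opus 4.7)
My plan is to argue by contradiction. Suppose $W(H_t)\le 12\pi$ for every $t\in[0,1]$; the aim is to build from $H$ a triple-point-free regular homotopy joining some immersion in $J$ to an embedded sphere, contradicting Theorem~\ref{thm_triplePoints}.

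First, I would apply Theorem~\ref{thm_globalSing} separately to the endpoints $H_0=j$ and $H_1$. Both carry Willmore energy at most $12\pi$, so the theorem produces regular homotopies $G^0$ (out of $j$) and $G^1$ (out of $H_1$) whose Willmore energies drop strictly below $12\pi$ as soon as the parameter is positive. By Theorem~\ref{thm_LYineq} these homotopies are triple-point free ($j$ and $H_1$ themselves carry no triple points). A direct appeal to Theorem~\ref{thm_triplePoints} then pins down their endpoints: $G^0$ cannot terminate at a round sphere (that would give a triple-point-free regular homotopy from $j\in J$ to an embedded sphere), so it must end at some $j^0\in J$; symmetrically, $G^1$ must terminate at a round sphere $s$ rather than in $J$. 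Concatenating the reversal of $G^0$, the homotopy $H$, and $G^1$ produces a regular homotopy from $j^0\in J$ to the embedded sphere $s$ with Willmore energy at most $12\pi$ throughout. By Theorem~\ref{thm_triplePoints} this concatenation has a triple point, and since the outer pieces are triple-point free the triple point must sit on $H$, at some time $t^*$; Li--Yau combined with the standing assumption then forces $W(H_{t^*})=12\pi$.

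The heart of the proof---and presumably what names the appendix---is then a gluing step. Apply Theorem~\ref{thm_globalSing} one more time, now to $H_{t^*}$, to obtain a regular homotopy $G^*$ with $G^*_0=H_{t^*}$ along which the Willmore energy drops strictly below $12\pi$ for positive parameter. I would splice $G^*$ into $H$ by replacing a short window $[t^*-\varepsilon,t^*+\varepsilon]$ around $t^*$ with a detour that first runs a short arc of $G^*$---immediately pushing the energy below $12\pi$ and destroying the triple point via Li--Yau---and then rejoins $H$ past $t^*$. Iterating this excision at each triple-point time of $H$ would produce a regular homotopy from $j$ to $H_1$ whose Willmore energy is everywhere strictly less than $12\pi$, hence is triple-point free by Li--Yau. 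Conjugating once more with $G^0$ reversed and $G^1$ yields a triple-point-free regular homotopy from $j^0\in J$ to $s$, in contradiction with Theorem~\ref{thm_triplePoints}.

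The decisive difficulty lies precisely in making the splice work. One needs a short triple-point-free regular homotopy joining a nearby surface $G^*_\sigma$ (with $W(G^*_\sigma)<12\pi$) back to $H_{t^*+\varepsilon}$; for $\sigma$ and $\varepsilon$ small enough this should follow from continuous dependence of a Willmore-type flow on the initial datum together with the openness of the triple-point-free condition, but care is required when the set of triple-point times of $H$ is not discrete. In that degenerate case, either an approximation argument by a generic homotopy, or a direct appeal to Li--Yau rigidity at the equality value $12\pi$, would be needed to close the argument.
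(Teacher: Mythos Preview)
Your proof has a genuine gap in the splicing step, and the paper's argument is both much shorter and avoids the difficulty altogether.

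The paper's proof runs as follows. By \cite{S_TriplePoints}, some $H_t$ must carry not merely a triple point but a \emph{generic} (transversal) one---otherwise a small perturbation of $H$ would remove all triple points and produce a triple-point-free regular homotopy from $J$ to an embedded sphere, contradicting Theorem~\ref{thm_triplePoints}. A generic triple point is stable under $C^\infty$-small perturbations, so the Willmore flow started at $H_t$ still has a triple point for some short time $\eps>0$, and hence energy at least $12\pi$ by Li--Yau. But Bryant's classification rules out Willmore spheres at energy $12\pi$, so $H_t$ is not critical and the flow strictly decreases energy; thus $W(H_t)>12\pi$. The two ingredients you are missing are exactly the stability of the triple point and the absence of critical points at $12\pi$.

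In your approach, the detour through $G^*$ does not actually avoid the triple point. You want to replace $H$ on $[t^*-\varepsilon,t^*+\varepsilon]$ by a path through $G^*$, but $G^*_0=H_{t^*}$ itself carries the triple point, so any route from $H_{t^*-\varepsilon}$ into $G^*$ passes through it. The return leg from $G^*_\sigma$ back to $H_{t^*+\varepsilon}$ is equally uncontrolled: $G^*$ is not the Willmore flow but the composite homotopy of Theorem~\ref{thm_globalSing} (with gluings, rescalings, and reparametrizations), so continuous dependence on initial data is not available in any form that would keep the connecting piece below $12\pi$. Finally, the set of triple-point times of $H$ may be a nontrivial closed set, so iteration does not terminate; your closing remark acknowledges this but offers no fix. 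The stability of a generic triple point is what converts the whole issue into a one-line contradiction via the Willmore flow.
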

The two theorems combined yield the following.
\begin{theorem}\label{thm_RHClasses}
	$ I(\omega) $ has four regular homotopy classes for $ \omega\in(8\pi,12\pi] $.
\end{theorem}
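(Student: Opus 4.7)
The plan is to exhibit four pairwise inequivalent regular-homotopy classes and to show that every $f\in I(12\pi)$ is equivalent inside $I(12\pi)$ to one of four chosen representatives. My candidates are the standard round sphere $e:\S^2\to\R^3$, its reflection $-e$, an immersion $j\in J$ whose Willmore energy I take close to $8\pi$ (as the remark after Theorem~\ref{thm_globalSing} allows), and its antipode $-j\in J$.

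For the upper bound I apply Theorem~\ref{thm_globalSing} to an arbitrary $f\in I(12\pi)$: the resulting regular homotopy has strictly decreasing Willmore energy, hence remains inside $I(12\pi)$ and connects $f$ to either a round sphere or some $j'\in J$. Round spheres of a fixed orientation already form a single regular-homotopy class inside $I(4\pi+\varepsilon)$ via the path-connected group of similarities of $\R^3$, contributing two classes. For elements of $J$, I decompose $J$ into the two triple-point-free regular-homotopy classes containing $j$ and $-j$ respectively, and argue that each intersects $I(12\pi)$ in a single regular-homotopy class of $I(12\pi)$: given $f\in J\cap I(12\pi)$, Theorem~\ref{thm_globalSing} must end at some $j'\in J$ rather than at a round sphere, for otherwise one would obtain a regular homotopy from $j$ (or $-j$) to an embedded sphere staying below $12\pi$ in energy, contradicting Corollary~\ref{cor_triplePoints}; I would then verify that the surgery construction underlying Theorem~\ref{thm_globalSing} preserves the triple-point-free class of \cite{S_TriplePoints}, so that iterating brings $f$ to the chosen representative $j$ or $-j$ inside $I(12\pi)$.

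For the lower bound I separate the four classes pairwise. The round spheres $e$ and $-e$ are separated because any regular homotopy between them contains a quadruple point (Theorem~\ref{thm_MBquad}), hence attains Willmore energy at least $16\pi>12\pi$ by Theorem~\ref{thm_LYineq}. The round spheres and $\pm j$ are separated directly by Corollary~\ref{cor_triplePoints}. To separate $j$ from $-j$, I argue that any regular homotopy $H$ between them inside $I(12\pi)$ can only have triple points at times where $W(H_t)=12\pi$ exactly (Li--Yau), and that the equality case forms a rigid, codimension-positive stratum; a generic perturbation therefore removes these isolated triple-point moments while keeping the homotopy inside $I(12\pi)$, producing a triple-point-free regular homotopy $j\leadsto -j$ and contradicting the invariant of \cite{S_TriplePoints}.

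The hardest step will be this last separation of $j$ from $-j$, because $I(12\pi)$ sits exactly at the Li--Yau threshold for triple points, so the triple-point-free invariant cannot be invoked directly. I expect the argument to combine a rigidity statement for the Li--Yau equality case at $W=12\pi$ with a transversality argument enabling the perturbation. A secondary delicate point is verifying that the surgery construction behind Theorem~\ref{thm_globalSing} really does preserve the triple-point-free class, which seems essential for the upper-bound reduction of $J\cap I(12\pi)$ to exactly two representatives.
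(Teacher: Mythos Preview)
Your overall architecture is right, but two steps miss simpler arguments that the paper uses.

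\textbf{Separation of $j$ from $-j$.} You flag this as the hardest step and propose a delicate perturbation/rigidity argument at the Li--Yau threshold. This argument has a genuine gap: perturbing the homotopy to remove the triple-point moments may push the Willmore energy above $12\pi$, and there is no ready rigidity statement for the $12\pi$ equality case that would let you perturb \emph{within} $I(12\pi)$. The paper bypasses all of this: the Max--Banchoff theorem is not just about round spheres but asserts that \emph{every} regular homotopy from an immersion $f$ to $-f$ has a quadruple point. Applying this to $j$ and $-j$ immediately gives a time with energy $\geq 16\pi>12\pi$, so they cannot lie in the same class of $I(\omega)$. One line, no perturbation.

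\textbf{Upper bound on the $J$ side.} Theorem~\ref{thm_globalSing} only lands you at \emph{some} $j'\in J$, and your ``iterate and verify the surgery preserves the triple-point-free class'' does not terminate: each application produces yet another $j''\in J$, and knowing that $j',j'',\dots$ all share the same triple-point-free invariant does not by itself yield a regular homotopy in $I(\omega)$ to your fixed representative $j$. The paper handles this with a separate lemma (Lemma~\ref{lem_Jlem}): it re-enters the explicit catenoid-sphere construction of Proposition~\ref{prop_gluingA}, obtains a surface in $\CatSph_\alpha(\lambda,\lambda,\delta)$ on each hemisphere, and then runs an explicit one-parameter family $\lambda\mapsto\lambda_1$ of catenoid-spheres---all with energy below $\omega$---to reach the fixed $j$ (or $-j$). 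This replaces your iteration by a concrete finite homotopy.
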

\begin{figure}[H]
	\centering
	\resizebox{0.9\textwidth}{!}{
	\begin{tikzpicture}[scale=1.3]
	
		\def\a{1.7}
		\def\x{1/\a}
		\def\d{0.3}
		
		\foreach \y/\label in {1/4\pi, 2/8\pi, 3/12\pi} {
			\draw[dotted, thin, black] (-7.5*\x-\d,\y) -- (8.5*\x+\d,\y);
			\node[anchor=south east] at (8.5*\x+\d,\y) {\(\label\)};
		}
		
		\fill[gray, opacity=0.2] 
		(-7*\x-\d,3)
		-- (-6*\x-\d,2)
		-- (-5*\x-\d,3)
		-- cycle;
		
		\fill[gray, opacity=0.2] 
		(-5*\x,3)
		-- (-3*\x,1)
		-- (-\x,3)
		-- cycle;
		
		\fill[gray, opacity=0.2] 
		(\x,3)
		-- (3*\x,1)
		-- (5*\x,3)
		-- cycle;
		
		\fill[gray, opacity=0.2] 
		(5*\x+\d,3)
		-- (6*\x+\d,2)
		-- (7*\x+\d,3)
		-- cycle;
		
		\draw[thick, black]
		(-7*\x-\d,3)
		-- (-6*\x-\d,2)
		-- (-5*\x-\d,3);
		\draw[thick, black]
		(-5*\x,3)
		-- (-3*\x,1)
		-- (-\x,3);
		\draw[thick, black]
		(\x,3)
		-- (3*\x,1)
		-- (5*\x,3);
		\draw[thick, black]
		(5*\x+\d,3)
		-- (6*\x+\d,2)
		-- (7*\x+\d,3);
		
		\node[anchor=center] at (-6*\x-\d-0.02,2.8) {\includegraphics[width=1.2cm]{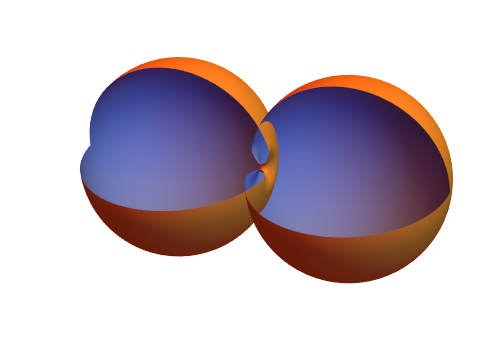}};
		\node[anchor=center] at (-3*\x-0.02,1-\d) {\includegraphics[width=0.7cm]{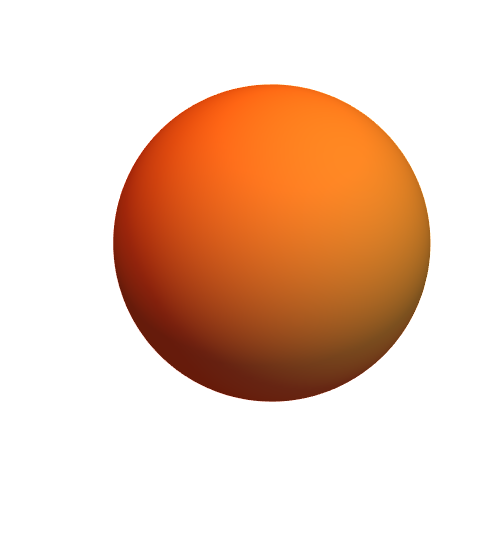}};
		\node[anchor=center] at (3*\x-0.02,1-\d) {\includegraphics[width=0.7cm]{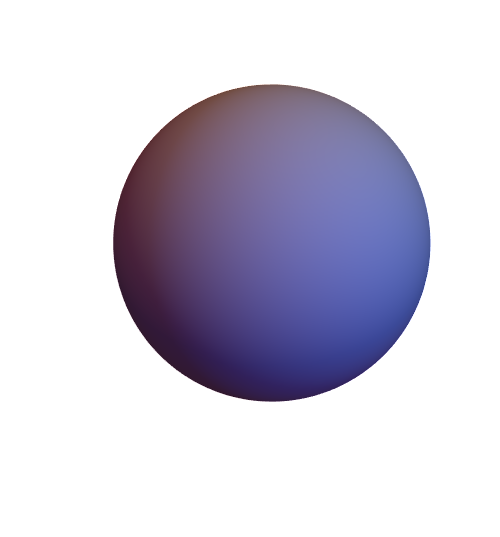}};
		\node[anchor=center] at (6*\x+\d-0.02,2.8) {\includegraphics[width=1.2cm]{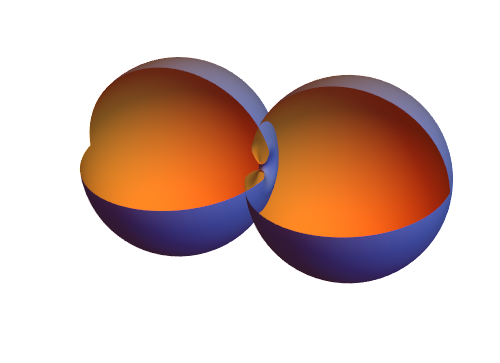}};
		
	\end{tikzpicture}
	}
	\caption{A sketch of the Willmore energy landscape of immersed $ 2 $-spheres in $ \R^3 $ below $ 12\pi $.}\label{fig_landscape}
\end{figure}
\begin{proof}[Proof of Theorem~\ref{thm_RHClasses}]
	As a consequence of Theorem~\ref{thm_globalSing}, there exists $ j\in J \, \cap \, I(\omega) $ such that all $ f\in I(\omega) $ can be joined to $ j $, $ -j $, $ e $ or $ -e $ by a regular homotopy in $ I(\omega) $ (see Lemma~\ref{lem_Jlem}).
	Therefore, $ I(\omega) $ has at most four regular homotopy classes.
	In $ I(\omega) $ the immersion $ j $ is not regularly homotopic to $ -j $ since all regular homotopies from any $ f\in\Imm(\S^2,\R^3) $ to $ -f $ have a quadruple point \cite{MB_SE}.
	By Corollary~\ref{cor_triplePoints}, it is also not regularly homotopic to $ e $ or $ -e $ in $ I(\omega) $. This proves the statement.
\end{proof}
In the two regular homotopy classes $ J_+ $ and $ J_- $ that do not contain round spheres, the infimum $$  \inf_{j\in J_{\pm}} W(j) = 8\pi $$ is not attained (see Remark~\ref{rem_JW>8pi}).

\subsection{Avoidable Singularities}\label{sec_avoidableSingularities}
For every initial surface there exists a Willmore flow $ f:\S^2\times [0,T)\to\R^3 $ on a maximal interval $ [0,T) $ with  $ 0<T\leq \infty $ \cite{KS_Survey}. If $ T<\infty $, then $ f $ cannot be smoothly extended onto $ T $.
\begin{defn}
	We say that a maximal Willmore flow $ f:\S^2\times [0,T)\to\R^3 $ \textit{develops a singularity} if it does not converge smoothly to a Willmore sphere as $ t\to T $. This singularity is \textit{avoidable} if there exists a regular homotopy $ H:\S^2 \times [0,1]\to\R^3 $ from $ f_0 $ to a smooth Willmore sphere with $ W(H_t)<W(f_0) $ for all $ t\in(0,1] $. If no such homotopy exists, we call the singularity \textit{unavoidable}.
\end{defn}
Recall that below $ 16\pi $ the only smooth Willmore spheres are the round spheres \cite{Bryant_Dual}. Whether the Willmore flow can develop singularities in finite time, and whether the area remains bounded, are both open questions. No such assumptions are imposed in the definition.

Blatt showed in \cite{Blatt} that the Willmore flow starting in certain surfaces of revolution in $ J $ necessarily develops singularities. If it were to converge to a round sphere, this would imply the existence of a regular homotopy between plane curves of different turning numbers. This example was originally proposed in \cite{MS_Num}, where numerical simulations suggested singularity formation in finite time. In \cite{DMSS_WillmoreFlowTOR} a family of singular examples for tori of revolution was given following an analogous argument. Apart from surfaces of revolution, no other initial surfaces are currently known to produce singularities under the Willmore flow.
As a consequence of Theorem~\ref{thm_triplePoints}, not only do surfaces of revolution but also any other initial surface in $ J $ develop singularities under the Willmore flow.
We obtain a full classification of initial surfaces with energy at most $ 12\pi $ that lead to unavoidable singularities.
\begin{prop}\label{prop_unavoidableSingularities}
	Let $ f_0:\S^2\to\R^3 $ be a smooth immersion with $ W(f_0) \leq 12\pi $. Then, the Willmore flow with initial surface $ f_0 $ develops an unavoidable singularity if and only if $ f_0\in J $.
\end{prop}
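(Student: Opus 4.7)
My plan is to combine the global topological information from Theorem~\ref{thm_RHClasses} with the constructive energy-decreasing homotopies of Theorem~\ref{thm_globalSing}, together with Bryant's classification~\cite{Bryant_Dual}, which asserts that below $16\pi$ the only smooth Willmore spheres are round.

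For the ``$f_0\in J$ implies unavoidable singularity'' direction, I would first rule out smooth convergence of the Willmore flow: such convergence would, via the monotonicity of $W$ along the flow, yield a regular homotopy inside $I(W(f_0))\subseteq I(12\pi)$ from $f_0\in J$ to the limiting Willmore sphere, which must be round by the energy bound. This contradicts the separation of $J$ from the round-sphere classes in $I(12\pi)$ asserted in Theorem~\ref{thm_RHClasses}, so the flow develops a singularity. The same contradiction obstructs any putative avoiding homotopy $H$: its endpoint $H_1$ must again be a round sphere, and the requirement $W(H_t)<W(f_0)\leq 12\pi$ keeps $H$ inside $I(12\pi)$, once more contradicting Theorem~\ref{thm_RHClasses}.

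For the converse, I would prove the contrapositive: assuming $f_0\notin J$ with $W(f_0)\leq 12\pi$, I construct an avoiding homotopy (or observe that no singularity occurs). The case $W(f_0)\leq 8\pi$ is covered by Theorem~\ref{thm_KS8pi}. For $W(f_0)\in(8\pi,12\pi]$, apply Theorem~\ref{thm_globalSing} to obtain a regular homotopy $H$ from $f_0$ with $W(H_t)<W(f_0)$ for $t\in(0,1]$, ending either at a round sphere---which is exactly the desired avoiding homotopy---or at some $j\in J$. I would rule out the second option by showing that $H$ is triple-point-free: the Li--Yau inequality (Theorem~\ref{thm_LYineq}) together with $W(H_t)<12\pi$ for $t>0$ forbids triple points of $H_t$ for $t>0$, and the discussion below handles $t=0$. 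Concatenating $H$ with a triple-point-free homotopy witnessing $j\in J$ would then force $f_0\in J$, contradicting the hypothesis.

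The main technical subtlety is excluding a triple point on $f_0$ itself at the boundary energy $W(f_0)=12\pi$, since Li--Yau alone only provides $W\geq 12\pi$ there. I would dispose of this by combining the equality characterization of Li--Yau with Bryant's theorem: equality at a triple point would realize $f_0$ as the inversion of a complete minimal sphere with three embedded planar ends, hence a smooth Willmore sphere of energy $12\pi$; but~\cite{Bryant_Dual} rules out smooth immersed Willmore spheres with $W=4\pi k$ for $k\in\{2,3\}$. Consequently no immersed sphere can simultaneously satisfy $W(f_0)\leq 12\pi$ and carry a triple point, and the argument closes cleanly.
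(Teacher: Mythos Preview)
Your proposal is correct and follows essentially the same route as the paper: both directions rest on Theorem~\ref{thm_globalSing} together with the triple-point obstruction for $J$ (the paper cites Theorem~\ref{thm_triplePoints} directly rather than the derived Theorem~\ref{thm_RHClasses}, and runs the converse in direct rather than contrapositive form). You are more explicit than the paper about excluding a triple point of $f_0$ at the boundary energy $12\pi$; note that in the paper's direct formulation of the converse one can bypass the appeal to Bryant's $k\in\{2,3\}$ gap, since the assumed singularity already forces $f_0$ to be non-critical, which immediately contradicts the equality case of Li--Yau at a triple point.
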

\begin{proof}
	If $ f_0\in J $, then by Theorem~\ref{thm_triplePoints} its Willmore flow necessarily develops an unavoidable singularity, otherwise it would provide a regular homotopy to a round sphere without triple points. If, conversely, the Willmore flow with initial surface $ f_0 $ develops an unavoidable singularity, then, by Theorem~\ref{thm_globalSing}, there exists a regular homotopy $ H $ with $ W(H_t)<W(f) $ for $ t\in(0,1] $, ending in some $ j\in J$. In particular, $ H $ has no triple points and it follows $ f_0\in J $.
\end{proof}
These results do not describe the analytic behaviour of the Willmore flow. In particular, it remains open whether the Willmore flow can develop avoidable singularities. However, it seems reasonable to expect that singularity formation is not only governed by the global topology of self-intersections but also by local analytic behavior.
\begin{conject}
	There exists a smooth immersion $ f_0:\S^2\to\R^3 $ with $ W(f)\leq 12\pi $ such that the Willmore flow with initial surface $ f_0 $ develops an avoidable singularity.
\end{conject}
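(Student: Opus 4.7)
The plan is to construct $f_0$ as a perturbation of the axisymmetric singularity-producing examples $g\in J$ from \cite{Blatt}, pushing the perturbation outside of $J$ while preserving the local pinching mechanism of the flow. By Proposition~\ref{prop_unavoidableSingularities}, any initial surface in $J$ produces an unavoidable singularity; conversely, any $f_0\notin J$ with $W(f_0)\leq 12\pi$ whose Willmore flow develops a singularity automatically yields an avoidable one, since Theorem~\ref{thm_globalSing} applied to $f_0$ must terminate at a round sphere rather than at an element of $J$. It therefore suffices to exhibit some $f_0\notin J$ with $W(f_0)\leq 12\pi$ whose flow fails to converge to a round sphere.

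Concretely, I would start from some $g\in J$ as in \cite{Blatt}, whose Willmore flow develops a singularity at a time $T<\infty$ with curvature concentrating in a small region $U\subset\S^2$, and attach a localised modification supported in $\S^2\setminus U$ that creates a triple point and alters the invariant of \cite{S_TriplePoints} so as to move the resulting immersion $f_0$ out of $J$. A first preparatory step would be to use the blow-up analysis of \cite{LN} and the localised energy estimates of Kuwert--Schätzle to identify the concentration region $U$ and to quantify the energy budget available for such a modification, ensuring that the total Willmore energy remains at most $12\pi$. Since $W(g)$ can be chosen arbitrarily close to $8\pi$, there is in principle room for a modification of cost close to $4\pi$, which should be more than enough to realise a local triple-point creation.

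The hard part will be ensuring that the modified initial surface still develops a singularity under the Willmore flow. This is a stability question for finite-time singularities of a fourth-order quasilinear parabolic equation, an area where essentially no general theory is available: small perturbations of singular initial data may in principle smooth out and flow to a round sphere. A possible route is a localisation argument, showing that the concentration of $\abs{A}^2$ in $U$ is governed by the Willmore flow of $g$ restricted to a neighbourhood of $U$, so that modifications supported away from $U$ do not affect the formation of the singularity up to a time close to $T$. If such a localisation cannot be established directly, one might instead try a compactness/continuity argument: within the set of initial data in $I(12\pi)\setminus J$ regularly homotopic to a round sphere, the subset producing smooth global Willmore flows converging to a round sphere is open, and one would attempt to show its complement is non-empty by interpolating along a continuous path of initial data between a round sphere and a suitable approximation of $g$. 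Either way, this analytic step is the principal obstacle, and the main reason the statement is only conjectural.
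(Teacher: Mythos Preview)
This statement is a \emph{conjecture} in the paper, not a proved theorem; the paper offers no proof, only a brief heuristic. You correctly recognise this, and your closing sentence identifies the same analytic obstacle the paper flags: no tool currently exists to verify that a given (non-rotationally-symmetric) initial sphere actually develops a singularity under Willmore flow.

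The paper's heuristic differs from yours in one useful respect. Rather than perturbing some $g\in J$, it proposes to take directly one of the three catenoid--sphere configurations in Figure~\ref{fig_catenoidSpheres} that are \emph{not} in $J$ (those with at least one type-$\beta$ connection). These share the same catenoid neck as the $\alpha\alpha$ configuration lying in $J$, so one expects the same local pinching, yet they sit in the regular homotopy class of a round sphere from the outset. The paper then points to the ``prescribed singularity'' constructions for Ricci flow \cite{Stolarski_Box} and mean curvature flow \cite{LeeZhao_Box} as possible templates for the missing analytic step.

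Your perturbative route has a built-in tension worth making explicit. The set $J$ is open among triple-point-free immersions: any $C^\infty$-small perturbation of $g\in J$ is joined to $g$ by a short regular homotopy without triple points and hence stays in $J$. Thus a ``localised modification'' pushing $g$ out of $J$ must either put a triple point on $f_0$ itself---forcing $W(f_0)\geq 12\pi$ by Li--Yau and pinning you exactly at the boundary energy---or be a topologically large change whose interpolation to $g$ already passes through a triple point. In the second case the modification is not small in any analytic sense, so the phrase ``localised away from $U$'' carries less weight than it suggests, and the stability argument becomes correspondingly harder. The paper's explicit candidate surfaces avoid this issue by construction.
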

One might find an initial surface $ f_0\notin J $ that locally is very close to a catenoid singularity (e.g.\ three of the four cases in Figure~\ref{fig_catenoidSpheres}). One would need to show that such a (global) singularity model indeed appears as a singularity of the flow. Similar arguments have been made for the Ricci flow \cite{Stolarski_Box} and the mean curvature flow \cite{LeeZhao_Box}. However, these flows develop singularities in finite time, whereas it remains an open question whether the Willmore flow does as well.

\begin{rem}
	If one is interested in flowing ``through'' singularities and allowing for topology change, then the techniques used in Section~\ref{sec_globalSing} provide results for such a notion of ``Willmore flow with surgery'' in the very specific setting of spheres with energy at most $ 12\pi $ (see Remark~\ref{rem_surgery}). 
\end{rem}

\subsection{An Extension of the Li--Yau inequality at $ 12\pi $}\label{sec_extensionLiYau}
The Li–Yau inequality provides a rare and powerful link between the global topology of an immersed surface—particularly its self-intersections---and its Willmore energy, an integral of a fundamentally local quantity. It has also been extended to related settings, such as for curves \cite{MR_LY1dim,Miura_LiYau} and for surfaces with boundary \cite{Schlierf_LiYauBoundary}. 

The Li--Yau inequality supports the idea that Willmore energy serves as a measure of complexity: more ``complicated'' immersions tend to have higher Willmore energy. An immersion with a triple point, for instance, has Willmore energy at least $ 12\pi $. Similarly, an immersion with arbitrarily short closed geodesic has Willmore energy at least $ 6\pi $ \cite{MRS_closedGeodesic}. 
Taking genus into account, the Willmore Conjecture states that every immersed torus has Willmore energy at least $ 2\pi^2 $. The Li--Yau inequality reduces the problem to embedded tori, and the conjecture was famously resolved by Marques and Neves \cite{MN_WillmoreConjecture}. Moreover, the minimal energy for fixed genus $ g $ tends to $ 8\pi $ as $ g\to\infty $ \cite{KLS_LargeGenus}.

However, beyond the relations to genus and the Li–Yau inequality, such direct connections between topological complexity of a closed surface and its Willmore energy remain rare.
We present two results in this spirit. The first is a lower bound of $ 12\pi $ for a large family of surfaces without triple points; the second is a lower bound for spheres of revolution in terms of the turning number of the generating curve. Both results crucially depend on the Li-Yau inequality.

Theorems~\ref{thm_globalSing} and~\ref{thm_triplePoints} provide an explicit description of the self-intersections of immersed spheres with Willmore energy at most $ 12\pi $. This yields the following extension of the Li-Yau inequality at $ 12\pi $. We denote by
\begin{align*}
	F:\left\{ f:\S^2\to\R^3 \text{ smooth immersion without triple points } \right\} \to \Z^\Z
\end{align*}
the invariant defined in \cite{S_TriplePoints} and by $ e_k:=(\delta_{ik})_{i\in\Z} $ the standard basis vectors of $ \Z^\Z $. 
\begin{prop}\label{prop_LiYauExtension}
	Let $ f:\S^2\to\R^3 $ be a smooth immersion without triple points. If $ F(f)\notin \{e_{-3},e_{-1},e_1,e_3\} $, then $ W(f)> 12\pi $.
\end{prop}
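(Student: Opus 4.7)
I would argue the contrapositive: assume $f:\S^2\to\R^3$ is a smooth triple-point-free immersion with $W(f)\leq 12\pi$, and show that $F(f)\in\{e_{-3},e_{-1},e_{1},e_{3}\}$.

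First, apply Theorem~\ref{thm_globalSing} to $f$ to obtain a regular homotopy $H:\S^2\times[0,1]\to\R^3$ with $H_0=f$, with $W(H_t)<W(f)\leq 12\pi$ for every $t\in(0,1]$, and with $H_1$ equal either to a round sphere $\pm e$ or to some $j\in J$. Next, observe that $H$ is entirely free of triple points: at $t=0$ this holds by assumption on $f$, while for $t\in(0,1]$ the strict bound $W(H_t)<12\pi$ together with the Li--Yau inequality (Theorem~\ref{thm_LYineq}) rules out any triple point of $H_t$.

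Since $F$ is, by construction in \cite{S_TriplePoints}, invariant under triple-point-free regular homotopies of triple-point-free immersions, this gives $F(f)=F(H_1)$. The problem therefore reduces to evaluating $F$ on the four model surfaces $e$, $-e$ and some fixed representatives $j,-j\in J$, and showing that these four values are exactly $e_{1},e_{-1},e_{3},e_{-3}$ (in some labelling corresponding to orientation and to ``lying in $J$''). For the round spheres this should be a direct computation from the definition of $F$, since an embedded sphere has no self-intersections at all and the sign encodes orientation. For the family $J$, one takes a rotationally symmetric representative whose generating curve is the one in Figure~\ref{fig_sketchJ}; the double-curve structure of the resulting surface of revolution is explicit enough that $F(j)$ and $F(-j)=-F(j)$ can be computed directly, and they must land on some $e_{\pm k}$ with $k\neq \pm 1$ since by Theorem~\ref{thm_triplePoints} they lie in regular homotopy classes disjoint from those of $\pm e$ among triple-point-free immersions.

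The main obstacle in carrying out the argument is precisely this last bookkeeping step: one needs to identify the exact value of $F$ on each of the four model surfaces and verify that they are $\pm e_{1}$ for round spheres and $\pm e_{3}$ for $J$-surfaces, using the description of $F$ in \cite{S_TriplePoints}. Everything else --- the Willmore-flow-with-surgery input from Theorem~\ref{thm_globalSing}, the Li--Yau exclusion of triple points along $H$, and the triple-point-free-homotopy invariance of $F$ --- slots together very cleanly once those four base values are pinned down.
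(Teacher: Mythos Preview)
Your proposal is correct and follows essentially the same route as the paper: contrapose, apply Theorem~\ref{thm_globalSing}, use Li--Yau to see the resulting homotopy is triple-point-free, invoke the homotopy invariance of $F$ from \cite{S_TriplePoints}, and evaluate $F$ on the four model surfaces. The only point to add is that the ``bookkeeping step'' you flag is already carried out in \cite[Example~5.2]{S_TriplePoints}, which gives $F(\pm e)=e_{\pm 1}$ and $F(j)=e_{\pm 3}$ for $j\in J$; note in particular that the correct relation is not $F(-j)=-F(j)$ but rather that reversing orientation swaps $e_3$ and $e_{-3}$.
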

\begin{proof}
	Suppose $ W(f)\leq 12\pi $. In \cite[Example~5.2]{S_TriplePoints} it is computed $ F(\pm e)=e_{\pm 1} $ and $ F(j)=e_{\pm 3} $ for all $ j\in J $.
	Then, Theorem~\ref{thm_globalSing} provides a regular homotopy without triple points to round spheres $ e $, $ -e $ or some $ j\in J $. The invariant is constant along regular homotopies without triple points \cite[Theorem~1.3]{S_TriplePoints} and thus $ F(f)\in \{e_{-3},e_{-1},e_1,e_3\}. $
\end{proof}
The assumptions of Proposition~\ref{prop_LiYauExtension} are satisfied by a large family of immersions. See \cite[Section~6]{S_TriplePoints} for a description of the image of $ F $ and Figure~\ref{fig_examplesExtendedLiYau} for some examples. Note that the Proposition applies not only to the displayed surfaces but also to any deformation without triple points which may break any symmetry.
\begin{figure}[H]
	\centering
	\begin{tikzpicture}
		\node[anchor=south] at (0,0) {\includegraphics[width=0.95\textwidth]{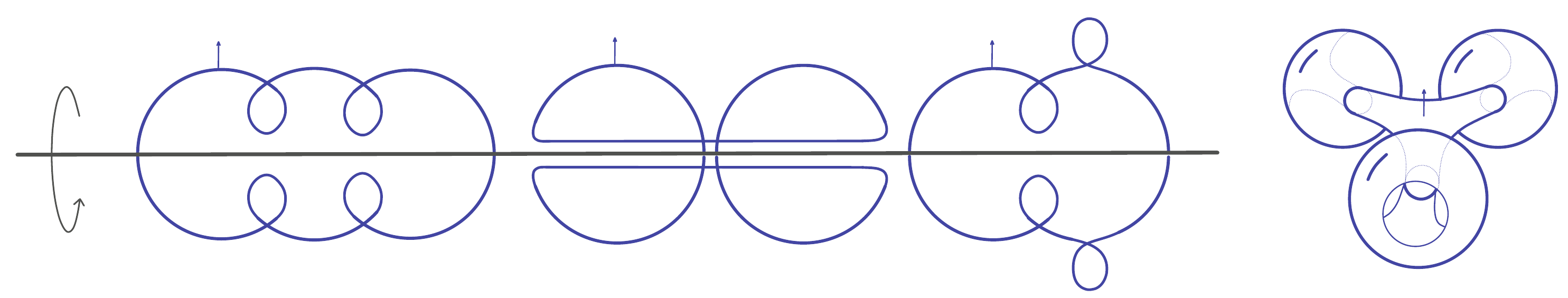}};
		\def\dx{0.25\textwidth}
		
		\foreach \k/\label in {-1.15/$F=2e_3-e_1$, -0.2/$F=2e_1 - e_\text{-1}$, 0.65/$F=e_3+e_\text{-1}-e_{1}$, 1.55/$F=3e_{-1}-2e_1$} {
			\node[anchor=south] at (\k*\dx,-0.5) {\small \label};
		}
	\end{tikzpicture}
	\caption{Some immersions without triple points and their values of $ F $ as computed in \cite[Figure~17]{S_TriplePoints}.}\label{fig_examplesExtendedLiYau}
\end{figure}
We mention one further extension of the Li--Yau inequality which is independent of the other results in this paper. It applies only to spheres of revolution. A sphere of revolution is a smoothly immersed sphere that can be parametrized as $ f(t,\varphi)= \left(r(t)\cos\varphi,r(t)\sin\varphi,h(t)\right)$ for some regular curve $ c=(r,h):[0,1]\to[0,\infty)\times \R $ where $ r(0)=r(1)=0 $, $ r(t)>0 $ for all $ t\in(0,1) $ and $ c $ meets the symmetry axis perpendicularly.

\begin{restatable}{prop}{propLiYauTurningNumber}\label{prop_LiYauExtensionRotSymm}
	Let $ f:\S^2\to\R^3 $ be a sphere of revolution whose generating curve $ c:[0,1]\to[0,\infty)\times \R $ has turning number $ \tau\in\Z+\frac12 $. If $ f $ is not a round sphere, then
	\begin{equation*}
		W(f) > 4\pi \left(\abs{\tau}+\frac12\right).
	\end{equation*}
\end{restatable}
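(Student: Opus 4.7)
The plan is to reduce the Willmore energy to a one-dimensional integral over the generating curve, and then extract the dependence on $\tau$ through Cauchy-Schwarz combined with a substitution governed by the total curvature.

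First I would parameterize $c=(r,h)$ by arclength $s\in[0,L]$ and let $\theta$ be a continuous lift of the tangent angle, so that $c'=(\cos\theta,\sin\theta)$ and the signed curvature of $c$ is $\kappa=\theta'$. The orthogonality of $c$ with the axis forces $\sin\theta(0)=\sin\theta(L)=0$; normalizing $\theta(0)=0$, the turning-number hypothesis gives $\theta(L)=(2k+1)\pi$ with $\tau=k+\tfrac12$. A direct calculation shows that the principal curvatures of $f$ are $\kappa_1=\theta'$ and $\kappa_2=\sin\theta/r$, and integrating over the rotational angle produces
$$W(f) \;=\; \frac{\pi}{2}\int_0^L\bigl(\theta'+\tfrac{\sin\theta}{r}\bigr)^{\!2} r\,ds.$$

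Second I would expand the square. The cross term is an exact derivative:
$$\int_0^L\theta'\sin\theta\,ds \;=\; \cos\theta(0)-\cos\theta(L) \;=\; 2,$$
independently of $\tau$, so that $W(f) = 2\pi + \tfrac{\pi}{2}(A+B)$ with $A=\int_0^L r(\theta')^2\,ds$ and $B=\int_0^L \sin^2\theta/r\,ds$. Three inequalities then chain to give the lower bound: AM-GM yields $A+B\geq 2\sqrt{AB}$; Cauchy-Schwarz applied to $\theta'\sqrt r$ and $\sin\theta/\sqrt r$ gives $\sqrt{AB}\geq\int_0^L|\theta'\sin\theta|\,ds$; and the substitution $u=\theta(s)$ together with the observation that $\theta$ must traverse at least the interval $[0,2\pi\tau]$ yields
$$\int_0^L|\theta'\sin\theta|\,ds \;\geq\; \int_0^{|2\pi\tau|}|\sin u|\,du \;=\; 2|2\tau| \;=\; 4|\tau|.$$
Chaining these gives $W(f)\geq 4\pi(|\tau|+\tfrac12)$.

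Finally, I would prove strictness by analyzing the equality cases. Cauchy-Schwarz equality forces $r|\theta'|=c|\sin\theta|$ for a constant $c\geq 0$, and AM-GM equality then pins $c^2=1$. The case $c=-1$ yields $H\equiv 0$, impossible for a closed immersed surface in $\R^3$ (for instance by applying the maximum principle to a coordinate function, or by observing that $K=-\kappa_2^2\leq 0$ contradicts $\int K\,d\mu=4\pi$). The case $c=+1$ is the umbilical condition $\kappa_1=\kappa_2$, which for a compact surface means $f$ is a round sphere; together with the monotonicity of $\theta$ needed for equality in the substitution step this also pins $\tau=\pm\tfrac12$. Thus equality is attained only by round spheres, which are excluded by hypothesis, so the inequality is strict. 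The main obstacle I foresee is the equality bookkeeping—specifically ruling out the spurious $c=-1$ branch and confirming that all three inequalities tighten simultaneously only for the round sphere.
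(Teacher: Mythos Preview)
Your argument for the non-strict inequality is correct and is a genuinely different---and considerably more elementary---route than the paper's. The paper decomposes the profile curve at the points where $\theta$ hits $2\pi l+\tfrac{\pi}{2}$, caps each middle segment with two quarter-circles to obtain a sphere of revolution with turning number $\tfrac32$, and then invokes the Li--Yau inequality (or the Kuwert--Sch\"atzle convergence theorem) to get $W\geq 8\pi$ on each capped piece; summing gives the bound. For strictness the paper then appeals to Bryant's classification of Willmore spheres together with Kusner's density theorem. Your approach, by contrast, extracts everything from the one-dimensional integrand via the exact cross term, AM--GM, Cauchy--Schwarz, and the substitution $u=\theta$. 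This is self-contained and avoids the Li--Yau inequality, Willmore flow convergence, and the Bryant/Kusner machinery entirely; it also makes the sharpness mechanism transparent.

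There is, however, a genuine gap in your equality analysis. Cauchy--Schwarz and AM--GM together give $r|\theta'|=|\sin\theta|$, i.e.\ $|\kappa_1|=|\kappa_2|$, not $\kappa_1=\kappa_2$. Your dichotomy ``$c=+1$ (umbilical) versus $c=-1$ (minimal)'' treats the sign as a global alternative, but a priori the sign could switch along the curve: umbilical where $\sin\theta>0$, minimal where $\sin\theta<0$. Ruling out the minimal case globally via Gauss--Bonnet or the maximum principle does not address this mixed scenario. The fix is short: use the substitution-step equality (which forces $\theta'\geq0$ where $\sin\theta\neq0$) and solve the umbilical ODE $r\theta'=\sin\theta$, $r'=\cos\theta$ on the first interval where $\theta\in(0,\pi)$. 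One finds $r=R\sin\theta$ with $\theta'=1/R$ constant, so the profile is a circular arc and $r\to0$ as $\theta\to\pi$. For $|\tau|>\tfrac12$ the curve must pass through $\theta=\pi$ at an interior parameter, forcing $r=0$ there and violating the hypothesis $r>0$ on $(0,1)$; for $|\tau|=\tfrac12$ the arc closes up exactly at the endpoint and $f$ is a round sphere. With this correction your strictness argument is complete.
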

A proof can be found in Appendix~\ref{sec_appendixGlue}.
This inequality is sharp as can be seen from gluing round spheres together by catenoid necks (see Remark~\ref{rem_JW>8pi} and \cite{Blatt} for $ \abs{\tau}=\frac32 $). 
For tori of revolution, analogous methods yield the (non-sharp) inequality $ W(f)\geq 4\pi \abs{\tau} $. It is interesting to compare these lower bounds to the energy of Willmore surfaces of revolution (see \cite[eq.~(46)]{Mandel_WillmoreSOR}).

\subsection{An Optimal Sphere Eversion}\label{sec_optimalSE}
We outline the relations of the main results of this paper to sphere eversions and the $ 16\pi $-Conjecture.
A sphere eversion is a regular homotopy connecting two round spheres of opposite orientation. The existence of a sphere eversion was proved by Smale \cite{Smale_SE} and later explicit sphere eversions were found \cite{Kuiper_SE,Phillips_SE,Morin_SERet,Apery_SE,FSH_SE,Aitchison_Holiverse,HY_SE,BB_SE}, the first by Shapiro \cite{FM_SEShapiro}\com{further see https://chrishills.org.uk/ChrisHills/sphereeversion/}. Theorem~\ref{thm_MBquad} by Max and Banchoff shows that for every sphere eversion $ H $, there exists $ t\in[0,1] $ such that $ H_t $ has a quadruple point. By the Li--Yau inequality it follows that $ W(H_t)\geq 16\pi $. Hence, every sphere eversion must reach a Willmore energy of at least $ 16\pi $. Does there exist an optimal sphere eversion in the following sense?
\begin{16piconjecture}[Kusner, 1984]
	There exists a sphere eversion $ H:\S^2\times[0,1]\to\R^3 $ that satisfies for all $ t\in[0,1] $ $$ W(H_t)\leq 16\pi. $$
\end{16piconjecture}
Kusner proposed to construct such an optimal sphere eversion by a min-max procedure or by Willmore flow. We summarize the idea of the latter here while the former is treated in \cite{R_SE}.
One explicit sphere eversion is the Froissart-Morin sphere eversion \cite{Morin_SERet} which uses Morin's surface \cite{Morin_SEProb} as a half-way model. It has the special symmetry that, after a rotation of $ \frac\pi2 $ in $ \R^3 $, the same surface is obtained but of opposite orientation. Given a regular homotopy from such a half-way model to a round sphere, one obtains a sphere eversion whose second half is constructed by reversing the orientation and the temporal direction of the first half.
Kusner found an explicit parametrization of a complete minimal surface whose compactified inversion is the Morin surface and a critical point of the Willmore energy at $ 16\pi $ \cite{Kusner_Conformal}. This surface can be perturbed along its index $ 1 $ direction \cite{HMB} to reduce its energy below $ 16\pi $. This perturbed surface is not a critical point and thus the Willmore flow can deform it to reduce the Willmore energy. However, it is not known whether the Willmore flow exists for all time and converges to a round sphere. In \cite{FSKBHC_SE}, numerical analysis suggests that it does, but analytically the possibility of singularities developing under the Willmore flow in this case has not been ruled out. 

We obtain the following.
\begin{prop}\label{prop_RHdecW}
	If a smooth immersion $ f:\S^2\to\R^3 $ has Willmore energy at most $ 12\pi $ and there exists a regular homotopy from $ f $ to a round sphere without triple points, then there exists also one whose Willmore energy does not exceed $ W(f) $.
\end{prop}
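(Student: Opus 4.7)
The plan is to apply Theorem~\ref{thm_globalSing} directly to $f$ and then use the extra triple-point-free hypothesis, together with the Li--Yau inequality and Theorem~\ref{thm_triplePoints}, to rule out the case that the resulting homotopy ends in some $j\in J$. The homotopy produced by Theorem~\ref{thm_globalSing} will then itself be the one we want.

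First, I would invoke Theorem~\ref{thm_globalSing} to obtain a regular homotopy $H:\S^2\times[0,1]\to\R^3$ with $H_0=f$, $W(H_t)<W(f)\leq 12\pi$ for all $t\in(0,1]$, and $H_1$ either a round sphere or some $j\in J$. If $H_1$ is a round sphere, then since $W(H_0)=W(f)$ and $W(H_t)<W(f)$ for $t>0$, the homotopy $H$ already satisfies $W(H_t)\leq W(f)$ on all of $[0,1]$, and we are finished.

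The remaining task is to exclude $H_1=j\in J$. To do so I would first observe that $H$ is automatically triple-point free: for every $t\in(0,1]$ the strict inequality $W(H_t)<12\pi$ together with the Li--Yau inequality (Theorem~\ref{thm_LYineq}) forbids triple points, and at $t=0$ the assumption that $f$ admits a triple-point-free regular homotopy to a round sphere implies in particular that $f=H_0$ itself has no triple points. Denoting by $G:\S^2\times[0,1]\to\R^3$ the given triple-point-free regular homotopy from $f$ to a round sphere, the concatenation of $G$ run in reverse with $H$ would then be a triple-point-free regular homotopy from a round sphere to $j\in J$, directly contradicting Theorem~\ref{thm_triplePoints}. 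Hence $H_1$ must be a round sphere, and $H$ is the required homotopy. I do not foresee any substantial obstacle here; the only point requiring mild care is the separate treatment of $t=0$, where triple-point-freeness has to come from the hypothesis rather than from Li--Yau.
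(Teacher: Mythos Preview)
Your proof is correct and follows essentially the same route as the paper. The paper compresses the argument by first deducing $f\notin J$ from Theorem~\ref{thm_triplePoints} and the triple-point-free hypothesis, and then noting that the homotopy from Theorem~\ref{thm_globalSing} (being triple-point-free by Li--Yau) cannot land in $J$ without forcing $f\in J$; your version spells out this same reasoning via the explicit concatenation with $G$ and the separate treatment of $t=0$.
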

\begin{proof}
	The assumption implies $ f\notin J $ by Theorem~\ref{thm_triplePoints} and thus Theorem~\ref{thm_globalSing} yields the statement.
\end{proof}
In other words, for a given initial surface, the topological description of a regular homotopy to a round sphere is sufficient to obtain a regular homotopy of low Willmore energy. 

\begin{openq}\label{q_GeneralizationOfCorollary}
	Does Proposition~\ref{prop_RHdecW} remain true if ``$ 12\pi $'' is replaced by ``$ 16\pi $'' and ``triple points'' is replaced by ``quadruple points''?
\end{openq}
An affirmative answer to the preceding question would prove the $ 16\pi $-Conjecture since the Froissart-Morin sphere eversion provides a regular homotopy from the perturbed Kusner-Morin surface to a round sphere without quadruple points.
This would require a generalization of the main results to $ 16\pi $ which is discussed in the following section. 

\subsection{Generalizations to $ 16\pi $}\label{sec_gen16pi}
A complete description of the Willmore energy landscape would include the number of regular homotopy classes of the sublevel sets $ I(\omega) $, not only for $ \omega\in[4\pi,12\pi] $, but for all $ \omega\in[4\pi,\infty) $. We outline the challenges for the generalization to $ \omega\leq 16\pi $.
\begin{openq}
	What are the regular homotopy classes of $ I(\omega) $ for $ \omega\in(12\pi,16\pi] $?
\end{openq}
Answering this requires a generalization of both Theorem~\ref{thm_globalSing} and Theorem~\ref{thm_triplePoints}.
The former can build on the local classification of \cite{LN} up to $ 16\pi $ but challenges arise. See Section~\ref{sec_globalSing16pi} which contains a partial result.
For the latter, instead of an invariant for triple-point-free immersions, one would need to devise an invariant for quadruple-point-free immersions that separates round spheres from any $ j\in J $ as well as any other global singularity model that may arise.
\begin{openq}{(\cite[Question~1.8]{S_TriplePoints}).}\label{q_quadruplePoints}
	Does every regular homotopy joining some $ j\in J $ to an embedded sphere have at least one quadruple point?
\end{openq}

Similar results for $ \omega>16\pi $ are out of reach for the approach introduced in this paper. This is due to the fact that results like Theorem~\ref{thm_MBquad} and Theorem~\ref{thm_triplePoints} can only exist for $ n $-tuple points with $ n\leq 4 $: Any regular homotopy of surfaces immersed in $ \R^3 $ can be slightly perturbed to not have any $ n $-tuple points for $ n\geq 5 $ (cf.\ \cite{S_TriplePoints}).

\section{Avoiding Singularities of the Willmore Flow}\label{sec_globalSing}
In this section, we give a global description of singularities of the Willmore flow below $ 12\pi $ using the local classification of \cite{LN}, and prove Theorem~\ref{thm_globalSing}.

We first present the idea of the proof. The details are covered in Sections~\ref{sec_gluingCatSph} and~\ref{sec_surgery}. 
Let $ f:\S^2 \times [0,T)\to \R^3 $ be a maximal Willmore flow with $ W(f_0)\leq 12\pi $ which does not converge to a round sphere. Then, by \cite{LN}, there is a point in $ \R^3 $ around which the image of $ f $ locally converges to a catenoid. This catenoid may be increasing or decreasing in scale. By scale invariance of the Willmore energy, a homotopy that scales the immersion by some factor $ \lambda_t $ with $ \lambda_0=1 $ has constant Willmore energy. In the following we implicitly assume such rescaling whenever needed. At some time $ s<T $ the immersion $ f_{s} $ is on some annulus $ C\subseteq\S^2 $ sufficiently close to a catenoid \cite{LN}. We replace the catenoid by two flat disks and glue them to $ f|_{D_1} $ and $ f|_{D_2} $ on the remaining disk components $ D_1 $ and $ D_2 $ of $ \S^2\setminus C $. We obtain two immersed spheres each of which have Willmore energy less than $ 8\pi $. This allows us to start the Willmore flow which converges to a round sphere \cite{KS_Remov}. These two flows are then glued to the two boundaries of the catenoid $ f|_C $ at each time. To control the Willmore energy of the gluing, we scale up the two flows as needed.
This yields a regular homotopy ending in a small catenoid glued to two round spheres (see Proposition~\ref{prop_gluingA}). Each sphere can be oriented in two ways relative to the catenoid (see Figure~\ref{fig_catenoidSpheres}).
\begin{figure}[H]
	\centering
	\newcommand{\drawCatSph}[3]{
		\def\sphOne{#2}
		\def\sphTwo{#3}
		\begin{tikzpicture}[scale=0.8]
			\begin{axis}[
				scale=1,
				axis equal,
				axis lines = none,
				tick=none,
				samples=500
				]
				\pgfmathsetmacro{\lam}{#1}
				\pgfmathsetmacro{\R}{#1}
				\pgfmathsetmacro{\Ra}{\R+0.75}
				\pgfmathsetmacro{\tzero}{ln(\lam + sqrt(\lam^2 - 1))/\lam}
				\pgfmathsetmacro{\theta}{rad(asin(1/\R))}
				\pgfmathsetmacro{\thetaa}{rad(asin(1/\Ra))}
				
				\addplot[myred, thick, domain=-\tzero:\tzero,parametric] ({x},{cosh(x*\lam)/\lam});
				\addplot[myred, thick, domain=-\tzero:\tzero,parametric] ({x},{-cosh(x*\lam)/\lam});
				
				\if \sphOne1
				\addplot[myblue, thick, domain=\theta:2*pi-\theta,parametric] ({\R*(cos(deg(\theta))-cos(deg(x)))+\tzero},{\R*sin(deg(x))});
				\else
				\addplot[myblue, thick, domain=\thetaa:2*pi-\thetaa,parametric] ({\Ra*(-cos(deg(\thetaa))+cos(deg(x)))+\tzero},{\Ra*sin(deg(x))});
				\fi
				
				\if \sphTwo1
				\addplot[myblue, thick, domain=\theta:2*pi-\theta,parametric] ({-(\R*(cos(deg(\theta))-cos(deg(x)))+\tzero)},{\R*sin(deg(x))});
				\else
				\addplot[myblue, thick, domain=\thetaa:2*pi-\thetaa,parametric] ({-(\Ra*(-cos(deg(\thetaa))+cos(deg(x)))+\tzero)},{\Ra*sin(deg(x))});
				\fi
			\end{axis}
		\end{tikzpicture}
	}
	\resizebox{0.6\textwidth}{!}{
	\begin{tabular}{c c}
		\drawCatSph{6}{1}{1}&
		\drawCatSph{6}{0}{1}\\
		\drawCatSph{6}{1}{0}&
		\drawCatSph{6}{0}{0}
	\end{tabular}
	}
	\caption{Four ways to attach two round spheres to a catenoid.}\label{fig_catenoidSpheres}
\end{figure}
In one of the four cases we obtain an immersion of the family $ J $ and we are done. For the other cases the catenoid lies outside of at least one of the spheres and we can decrease the Willmore energy by shrinking this sphere relative to the catenoid (see Lemma~\ref{lem_catenoidSphereShrinking}). Over time, we see less of both the sphere and the catenoid and eventually drop below a Willmore energy of $ 8\pi $. We then restart the Willmore flow, which converges to a round sphere.
\subsection{Notation}\label{sec_notation}
We denote open balls of radius $ r>0 $ and center $ p\in\R^n $ by $ B_r(p) $.
For a smooth map $ f:\Omega\subseteq \R^m\to\R $ we write
\begin{align*}
	\norm{f}_{C^k(\Omega)}&:=\sum_{\abs{\alpha}\leq k} \sup_{x\in \Omega} \abs{D^\alpha f(x)},\\
	\norm{D^kf}_{C^0(\Omega)}&:=\sum_{\abs{\alpha}= k} \sup_{x\in \Omega} \abs{D^\alpha f(x)},
\end{align*}
and for maps $ f:\Omega\subseteq \S^2\to\R^3 $ we define $ C^k $ norms by fixing charts of $ \S^2 $.
For an interval of radii $ I\subseteq (0,\infty) $ we denote an annulus by $$ \Ann I:=\left\{(x,y)\in\R^2 \ \mid \  \sqrt{x^2+y^2}\in I \right\}. $$
\subsection{Gluing Round Spheres to Catenoids}\label{sec_gluingCatSph}
For the remainder of this section, we fix for all $ \delta>0 $ a gluing function $\phi\in C^\infty(\R,\R)$ with
\begin{align*}
	\phi(x) = \begin{cases}
		0 \quad ,x\leq1-\delta,\\
		1 \quad ,x\geq1+\delta,
	\end{cases}
\end{align*}
and the properties $ \abs{\phi}\leq 1, \abs{\phi'}\leq M\delta^{-1}, \abs{\phi''}\leq M\delta^{-2} $ for some constant $ M>0 $ independent of $ \delta $.
A standard mollifier serves to construct such a function.
\com{One example can be defined as follows.
	\begin{align*}
		c&:=\int_{-1}^{1} e^{\frac{1}{x^2-1}} dx,\\
		f(x)&:= \begin{cases}
			\frac1c e^{\frac{1}{x^2-1}} \quad &, -1<x<1,\\
			0 \quad &, \abs{x}\geq 1,
		\end{cases}\\
		g(x)&:=\int_{-1}^{x} f(s)ds.
	\end{align*}
	The function $ g $ is smooth and satisfies $ \abs{g'}< 1 $ and $ \abs{g''}< 1 $. 
	Now, we can set $ \phi(x):=g\left( \frac{x-t}{\delta} \right) $ which has the stated properties.
}
\begin{defn}\label{def_gluing}
	For $ \delta\in(0,1) $, the gluing function $ \phi:\R \to \R $, the annulus\linebreak $ A:=\Ann[1-\delta,1+\delta] $ and two graphs $ f_1,f_2:A\to\R^3  $ with $ f_i(x,y) = (x,y,u_i(x,y)) $ for some $ u_1,u_2\in C^2(A,\R) $, we define the \textit{$ \delta $-gluing} of $ f_1 $ and $ f_2 $ as the graph $ \tilde{f}(x,y)=(x,y,\tilde{u}(x,y)) $ where $ \tilde{u} $ is defined as
	\begin{align*}
		\tilde{u}(x,y):=\left( 1- \phi\left(\sqrt{x^2+y^2}\right) \right) u_1(x,y) + \phi\left(\sqrt{x^2+y^2}\right) u_2(x,y).
	\end{align*}
\end{defn}
\begin{lem}\label{lem_gluingLemma}
	Let $ \delta\in (0,1) $. Then, there exists some constant $ C(M,\delta)>0 $ such that the following holds. For all $A,f_1,f_2,u_1,u_2 $ as above which in addition satisfy $ \norm{u_i}_{C^2(A)}\leq 1 $, the $ \delta $-gluing $ \tilde{f} $ of $ f_1 $ and $ f_2 $ satisfies
	\begin{align*}
		W(\tilde f) \leq W(f_1)+ C \norm{f_2-f_1}_{C^2(A,\R^3)}.
	\end{align*}
\end{lem}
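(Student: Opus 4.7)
The strategy is to view the Willmore energy of a graph as the integral of a smooth density in the first and second derivatives of the height function, and to quantify how much this density changes when $u_1$ is replaced by $\tilde u$. Setting $v := u_2 - u_1$ and $\phi_r(x,y) := \phi(\sqrt{x^2+y^2})$ on the annulus $A$, one has $\tilde u - u_1 = \phi_r \cdot v$. The product and chain rules, together with the pointwise bounds $|\phi|\leq 1$, $|\phi'|\leq M\delta^{-1}$, $|\phi''|\leq M\delta^{-2}$ and the fact that $\sqrt{x^2+y^2}$ is bounded away from $0$ (and from above) on $A$, yield
\[
\|\tilde u - u_1\|_{C^2(A)} \leq C_1(M,\delta)\, \|v\|_{C^2(A)}
\]
for an explicit constant $C_1(M,\delta)$. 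Combined with $\|u_1\|_{C^2(A)} \leq 1$ and $\|v\|_{C^2(A)} \leq 2$, this confines the pairs $(\nabla u_1,\nabla^2 u_1)$ and $(\nabla \tilde u,\nabla^2 \tilde u)$ to a common compact set $K(M,\delta) \subset \R^2 \times \R^{2\times 2}$.

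For a graph $(x,y,u(x,y))$ the Willmore integrand $H^2\sqrt{1+|\nabla u|^2}$ can be written as $\mathcal{W}(\nabla u,\nabla^2 u)$ for a rational function $\mathcal{W}$ whose only denominators are positive powers of $\sqrt{1+|\nabla u|^2}$, which never vanishes. Hence $\mathcal{W}$ is smooth and, restricted to $K(M,\delta)$, Lipschitz with some constant $L(M,\delta)$. Applied pointwise on $A$, the mean value theorem gives
\[
|\mathcal{W}(\nabla \tilde u,\nabla^2 \tilde u) - \mathcal{W}(\nabla u_1,\nabla^2 u_1)| \leq L(M,\delta)\, C_1(M,\delta)\, \|v\|_{C^2(A)}.
\]

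Integrating over $A$, whose planar Lebesgue measure equals $\pi((1+\delta)^2-(1-\delta)^2)=4\pi\delta\leq 4\pi$, one obtains
\[
W(\tilde f) - W(f_1) \leq 4\pi\, L(M,\delta)\, C_1(M,\delta)\, \|v\|_{C^2(A)} = C(M,\delta)\, \|f_2-f_1\|_{C^2(A,\R^3)},
\]
using that $f_2 - f_1 = (0,0,v)$, so the two $C^2$-norms coincide. The only delicate step is ensuring that the Lipschitz constant $L(M,\delta)$ of the Willmore density stays finite on the relevant compact set; this is automatic because $1+|\nabla u|^2$ is bounded above and below by positive constants, so the denominators appearing in $\mathcal{W}$ cannot degenerate. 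Everything else is a routine combination of product- and chain-rule estimates, and the geometry of the graph plays no deeper role.
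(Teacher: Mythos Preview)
The proposal is correct and follows essentially the same argument as the paper: bound $\|\tilde u - u_1\|_{C^2(A)}$ by a constant times $\|u_2-u_1\|_{C^2(A)}$ using the bounds on $\phi,\phi',\phi''$, confine the relevant derivatives to a compact set via $\|u_i\|_{C^2}\leq 1$, invoke Lipschitz continuity of the Willmore density $H^2\sqrt{\det G}$ on that set, and integrate over the annulus of area $4\pi\delta$. The only cosmetic difference is that the paper carries out the derivative estimates in polar coordinates while you work in Cartesian coordinates (correctly noting that $r$ is bounded away from $0$ on $A$).
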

\begin{proof}
	Consider the involved graphs in polar coordinates $ (x,y)= r(\cos\varphi,\sin \varphi) $. Then, the $ \delta $-gluing $ \tilde{u}(r,\varphi):=u_1(r,\varphi) +  \phi(r)\left( u_2(r,\varphi) - u_1(r,\varphi)  \right) $ has the derivatives
	\begin{align*}
		\partial_r \tilde u &= \partial_r u_1 + \phi \cdot \partial_r(u_2-u_1) + \phi' \cdot (u_2-u_1),\\
		\partial_\varphi \tilde u &= \partial_\varphi u_1 + \phi \cdot \partial_\varphi(u_2-u_1),\\
		\partial_{rr} \tilde u &= \partial_{rr}u_1 + \phi \cdot \partial_{rr}(u_2-u_1) + 2\phi' \cdot \partial_r(u_2-u_1) + \phi'' \cdot (u_2-u_1),\\
		\partial_{\varphi\varphi} \tilde u &= \partial_{\varphi\varphi} u_1 + \phi \cdot \partial_{\varphi\varphi} (u_2-u_1),\\
		\partial_{r \varphi} \tilde u &= \partial_{r \varphi} u_1 + \phi \cdot \partial_{r \varphi} (u_2-u_1) + \phi' \cdot \partial_\varphi (u_2-u_1).
	\end{align*}
	Since $ \delta<1 $ we have $ \abs{\phi},\abs{\phi'},\abs{\phi''}\leq\frac{M}{\delta^2} $. It follows
	\begin{align}\label{eq_gluingLipschitz}
		\norm{D^\alpha \tilde{u} - D^\alpha u_1}_{C^0(A)} \leq \frac{2M}{\delta^2} \norm{u_1 - u_2}_{C^2(A)} = \frac{2M}{\delta^2} \norm{f_1 - f_2}_{C^2(A,\R^3)},
	\end{align}
	for all multiindices $ \alpha $ with $ \abs{\alpha}\leq 2 $. 
	Furthermore, with the assumption $ \norm{u_i}_{C^2(A)}\leq 1$ we have \com{use triangle inequality in the above relations of $ \partial_{\alpha}\tilde u $ to the derivatives of $ u_i $}
	\begin{align}
		\norm{D^{\alpha}\tilde u}_{C^0(A)} \leq \norm{D^{\alpha}u_1}_{C^0(A)} + \norm{D^{\alpha}\tilde u - D^\alpha u_1}_{C^0(A)}
		&\leq 1 + \frac{4 M}{\delta^2}.\label{eq_gluingLipschitzBoundedDomain}
	\end{align}
	Now, the integrand $ H^2\sqrt{\det G} $ of the Willmore energy, with mean curvature $ H $ and first fundamental form $ G $, is a differentiable function of the first and second derivatives of $ u $. Thus, it is Lipschitz on the bounded domain \eqref{eq_gluingLipschitzBoundedDomain} with some Lipschitz constant $ L(M,\delta)>0 $. Together with \eqref{eq_gluingLipschitz} we obtain
	\begin{align*}
		\tilde H^2 \sqrt{\det \tilde G} \leq H_1^2\sqrt{\det G_1} + L \frac{2M}{\delta^2} \norm{f_2 - f_1}_{C^2(A,\R^3)}.
	\end{align*}
	With the area $ 4\pi \delta $ of the annulus we conclude
	\begin{equation*}
		W(\tilde f) \leq W(f_1) + \frac{8\pi L M}{\delta} \norm{f_2 - f_1}_{C^2(A,\R^3)}. \qedhere
	\end{equation*}
\end{proof}
Note that there exist other methods for smoothly gluing two surfaces together that are more efficient in terms of Willmore energy; see \cite{BK_ExistenceWillmore} for an example. The purpose of the gluing defined above is to provide a simple, explicit construction that nonetheless has controlled Willmore energy. 

Let $\lambda>1, R>1, \delta\in(0,1-1/\lambda)\cap(0,R-1)  $. We want to define the $ \delta $-gluing of one half of a catenoid of scale $1/\lambda$ with a sphere of radius $ R $.
First define 
\begin{align*}
	t_0&:=\frac{\cosh^{-1}(\lambda)}{\lambda},\\
	\theta&:=\arcsin\left(\frac{1}{R} \right),
\end{align*}
and the curves
\begin{align}
	c_1&:[0,\infty)\to \R^2, \qquad t\mapsto\left(\frac{\cosh(\lambda t)}{\lambda} , t-t_0 \right),\label{eq_catenoid}\\
	c_2&:[0,\pi]\to\R^2, \qquad \  t\mapsto R \left( \sin(t), \cos(\theta) - \cos(t)\right).\label{eq_sphere}
\end{align}
The first curve is a catenary and the second the ``east'' half of a circle of radius $R$ and midpoint $ (0,R\cos(\theta))=(0,\sqrt{R^2-1}) $. We have $ c_1(t_0)=c_2(\theta) = (1,0) $. In this intersection point, $ c_1 $ and $ c_2 $ are tangent if and only if $ R = \lambda $. 
Let $ u_1:[1/\lambda, 1+\delta]\to \R $ and $ u_2:[0,R)\to\R $ be the graph parametrizations of $ c_1 $ and $ c_2 $, i.e.\ $ x\mapsto (x,u_i(x)) $ parametrizes a part of $ c_i $. 
We now extend $ u_1,u_2 $ to rotationally symmetric functions on annuli and construct the gluing $ \tilde u : \Ann[1-\delta, 1+\delta]\to\R $ of $ u_1 $ and $ u_2 $ according to Definition~\ref{def_gluing}. Now, define $ g_\beta(x,y):=(x,y,u_\beta(x,y)) $ on $ \Ann[1/\lambda,R) $ with (see also Figure~\ref{fig_catSpheres})
\begin{align*}
	u_\beta(p):=\begin{cases}
		u_1(p)\qquad&, \abs{p} \in [1/\lambda,1-\delta)\\
		\tilde{u}(p)\qquad &, \abs{p} \in [1-\delta,1+\delta]\\
		u_2(p)\qquad &, \abs{p} \in (1+\delta,R).
	\end{cases}
\end{align*}
Let $ D\subseteq\S^2 $ be a closed topological disk and $ f_\beta:D\to\R^3 $ be any smooth immersion such that the interior of $ D $ contains a closed disk $D_1$ and $f_\beta|_{D_1}$ parametrizes a closed (round) hemisphere of radius $R$ and $ f_\beta|_{D\setminus D_1} $ parametrizes the same surface as $ g_\beta $ (see Figures~\ref{fig_catSpheres} and~\ref{fig_catSpheresZoom}). We call $f_\beta$ a catenoid sphere of type $\beta$.

To define catenoid spheres of type $\alpha$, we repeat the definition above but with the curve $c_2$ adjusted to be
\begin{align*}
	c_2&:[0,\pi]\to\R^2, \qquad \  t\mapsto R \left( \sin(t), \cos(t) - \cos(\theta) \right).
\end{align*}
This is the ``east'' half of a circle of radius $R$ and midpoint $ (0,-R\cos(\theta)) $. Again, we have $ c_1(t_0)=c_2(\theta) = (1,0) $ but $c_1$ and $c_2$ are never tangent in this intersection point. We proceed analogously with the construction and obtain catenoid spheres $f_\alpha$ of type $ \alpha $ (see Figure~\ref{fig_catSpheres}).

\begin{figure}[H]
	\centering
	\hspace*{\fill}
	\begin{minipage}{0.4\textwidth}
		\centering
		\resizebox{0.9\textwidth}{!}{
		\begin{tikzpicture}[scale=1]
			\begin{axis}[
				scale=1,
				axis equal,
				axis lines=middle,
				xtick={-1,1},    
				ytick={0},    
				xmin=0.5,
				xmax=-0.5,
				ymin=-8.5,
				ymax=1.2,
				samples=500,
				legend pos=north west
				]
				\pgfmathsetmacro{\lam}{4}
				\pgfmathsetmacro{\R}{4}
				\pgfmathsetmacro{\tzero}{ln(\lam + sqrt(\lam^2 - 1))/\lam}
				\pgfmathsetmacro{\theta}{rad(asin(1/\R))}
				
				\addplot[myred, thick, domain=0:\tzero,parametric] ({cosh(x*\lam)/\lam},{x-\tzero});
				\addplot[myred, thick, domain=0:\tzero,parametric] ({-cosh(x*\lam)/\lam},{x-\tzero});
				
				\addplot[myblue, thick, domain=\theta:2*pi-\theta,parametric] ({\R*sin(deg(x))},{\R*(-cos(deg(\theta))+cos(deg(x)))});
				\node[anchor=south] at (rel axis cs:0.1,0.9) {type $ \alpha $};
				\node[anchor=south] at (axis cs:0.5,0.1) {\textcolor{myred}{$u_1$}};
				\node[anchor=south] at (axis cs:1.5,0.1) {\textcolor{myblue}{$u_2$}};
				
			\end{axis}
		\end{tikzpicture}
		}
	\end{minipage}
	\hfill
	\begin{minipage}{0.4\textwidth}
		\centering
		\resizebox{\textwidth}{!}{
		\begin{tikzpicture}[scale=1]
			\begin{axis}[
				scale=1,
				axis equal,
				axis lines=middle,
				xtick={-1,1},    
				ytick={0},    
				xmin=0.5,
				xmax=-0.5,
				ymin=-1.2,
				ymax=8.5,
				samples=500,
				legend pos=north west
				]
				\pgfmathsetmacro{\lam}{4}
				\pgfmathsetmacro{\R}{4}
				\pgfmathsetmacro{\tzero}{ln(\lam + sqrt(\lam^2 - 1))/\lam}
				\pgfmathsetmacro{\theta}{rad(asin(1/\R))}
				
				\addplot[myred, thick, domain=0:\tzero,parametric] ({cosh(x*\lam)/\lam},{x-\tzero});
				\addplot[myred, thick, domain=0:\tzero,parametric] ({-cosh(x*\lam)/\lam},{x-\tzero});
				
				\addplot[myblue, thick, domain=\theta:2*pi-\theta,parametric] ({\R*sin(deg(x))},{\R*(cos(deg(\theta))-cos(deg(x)))});
				\node[anchor=south] at (rel axis cs:0.1,0.9) {type $ \beta $};
				\node[anchor=center] at (axis cs:0.5,-1) {\textcolor{myred}{$u_1$}};
				\node[anchor=center] at (axis cs:1.5,0.8) {\textcolor{myblue}{$u_2$}};
				
			\end{axis}
		\end{tikzpicture}
		}
	\end{minipage}
	\hspace*{\fill}
	\caption{The profile curves for the smooth rotationally symmetric catenoid spheres of types $\alpha$ and $\beta$.}\label{fig_catSpheres}
\end{figure}
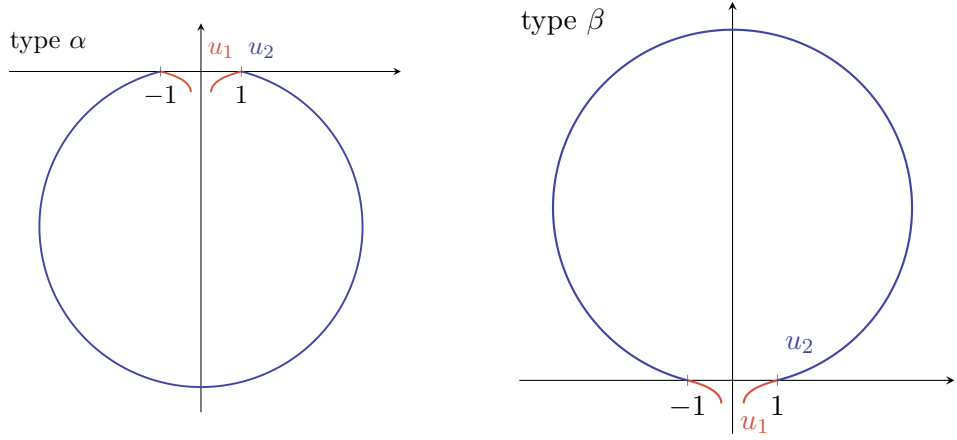
\begin{figure}[H]
	\centering
	\begin{tikzpicture}
		\node[anchor=center] at (0,0) {\includegraphics[width=0.4\textwidth]{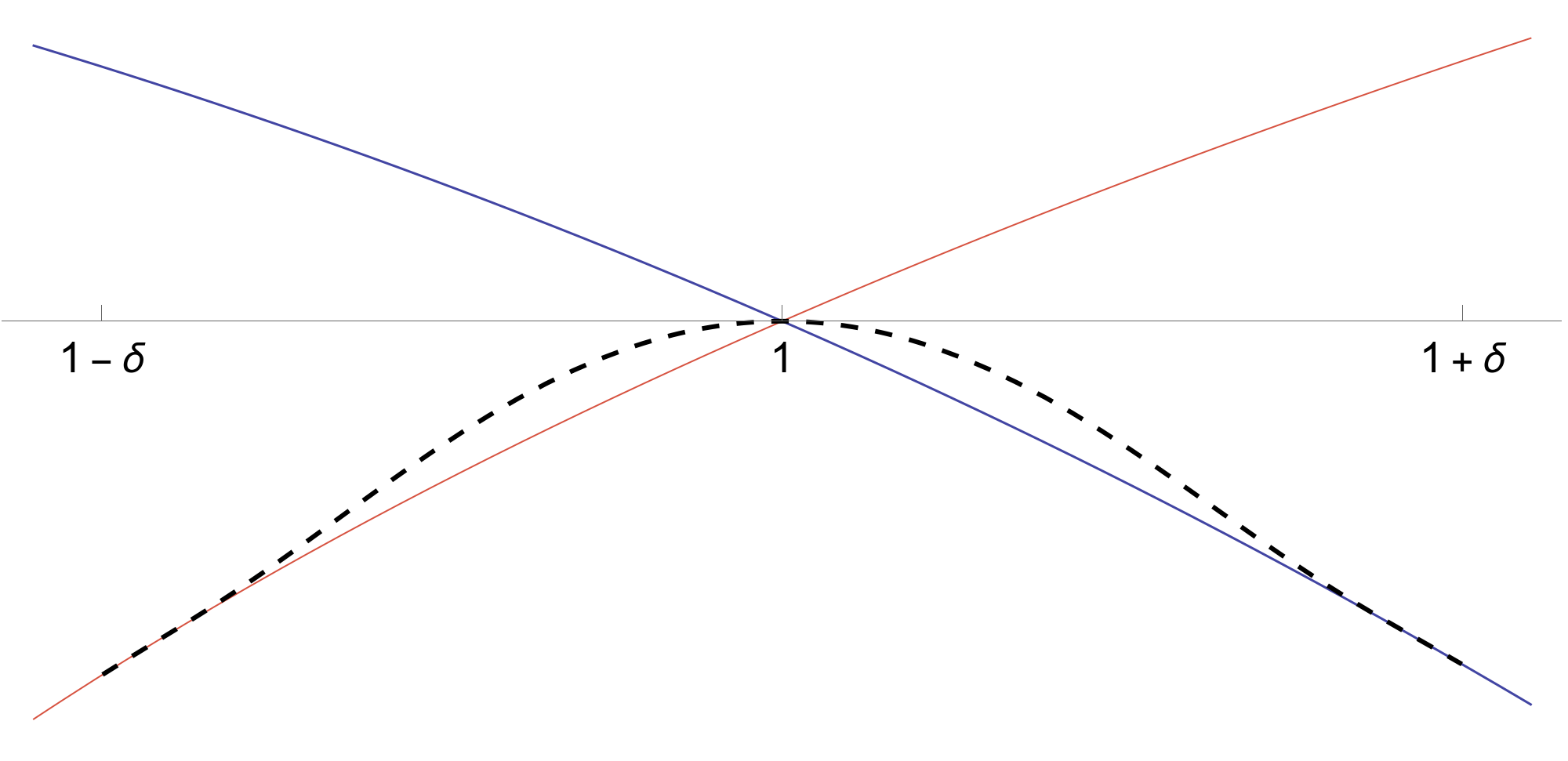}};
		\node[anchor=center] at (-2,-1.2) {\textcolor{myred}{$u_1$}};
		\node[anchor=center] at (2,-1.2) {\textcolor{myblue}{$u_2$}};
		\node[anchor=center] at (0,0.5) {\textcolor{black}{$\tilde u$}};
		\node[anchor=north] at (0,2) {type $ \alpha $};
	\end{tikzpicture}
	\hspace*{0.05\textwidth}
	\begin{tikzpicture}
		\node[anchor=center] at (0,0.18) {\includegraphics[width=0.4\textwidth]{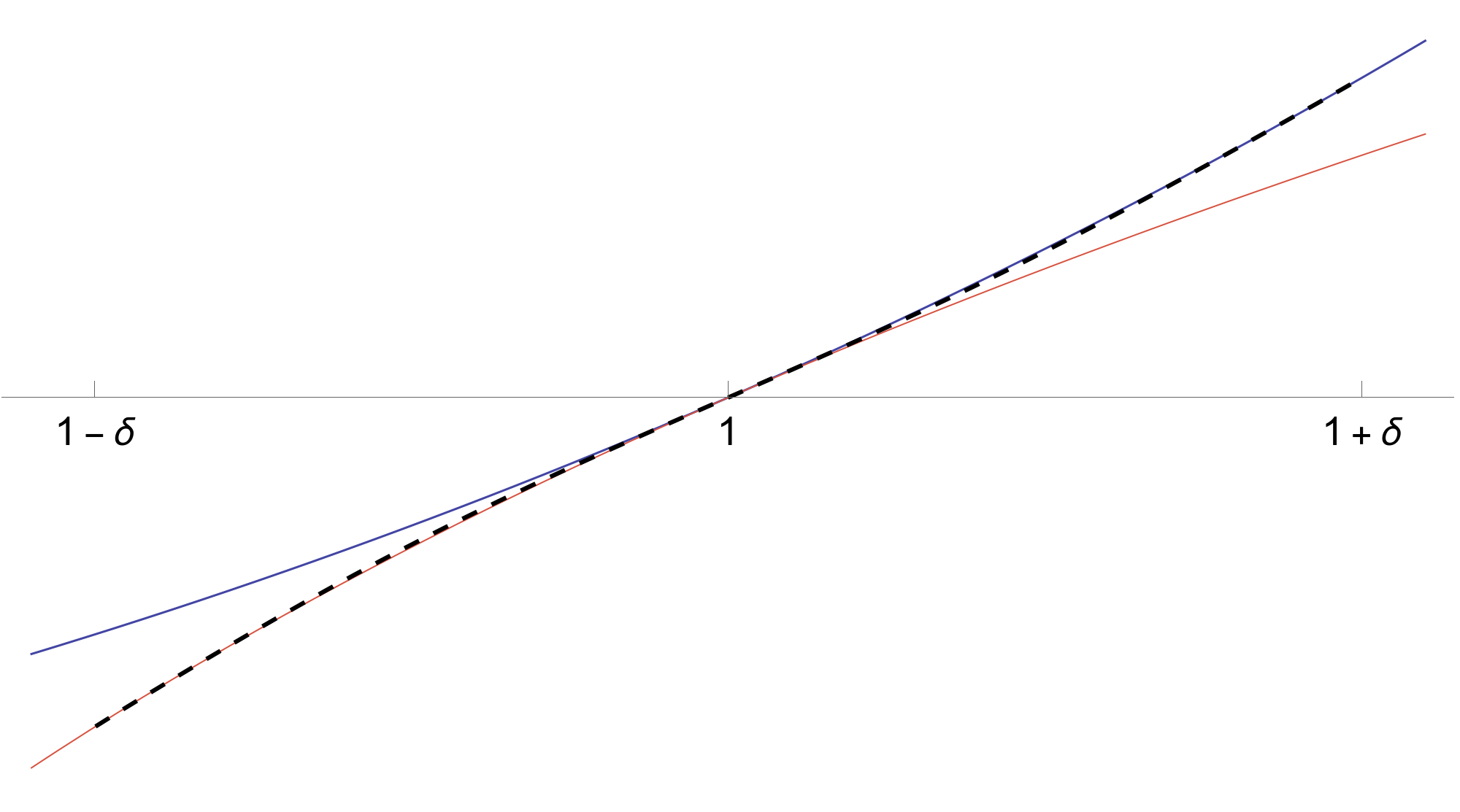}};
		\node[anchor=center] at (-2,-1.2) {\textcolor{myred}{$u_1$}};
		\node[anchor=center] at (2,1.6) {\textcolor{myblue}{$u_2$}};
		\node[anchor=center] at (0,0.5) {\textcolor{black}{$\tilde u$}};
		\node[anchor=north] at (0,2) {type $ \beta $};
	\end{tikzpicture}
	\caption{The gluing region and the functions $ u_1,u_2,\tilde{u} $.}\label{fig_catSpheresZoom}
\end{figure}

\begin{defn}\label{def_CatSph}
	We define $ \CatSph_\alpha(\lambda,R,\delta) $ (resp.\ $ \CatSph_\beta(\lambda,R,\delta) $) to be the set of all catenoid spheres $ f_\alpha $ (resp.\ $ f_\beta $) of type $ \alpha $ (resp.\ $ \beta $) for given $\lambda>1$, $R>1$, and $\delta\in(0,1-1/\lambda)\cap(0,R-1)$ as defined above. We also set $$ \CatSph(\lambda,R,\delta) := \CatSph_\alpha(\lambda,R,\delta) \cup \CatSph_\beta(\lambda,R,\delta). $$
	We denote by $ W_\alpha(\lambda,R,\delta) $ (resp.\ $ W_\beta(\lambda,R,\delta) $) the Willmore energy of some catenoid sphere $ f\in \CatSph_\alpha(\lambda,R,\delta) $ (resp.\ $ f\in \CatSph_\beta(\lambda,R,\delta) $). We extend these functions by 
	$$ W_\alpha(\lambda,R,0) := W_\beta(\lambda,R,0) := 2\pi \left(1 + \sqrt{1-\frac{1}{R^2}} \right), $$
	which is the Willmore energy of the spherical region in the catenoid sphere before gluing.
\end{defn}

\begin{restatable}{lem}{catenoidSphereShrinking}\label{lem_catenoidSphereShrinking}
	There exists $ \Lambda>0 $ such that the Willmore energy of catenoid spheres satisfies the following.
	\begin{enumerate}[label=\roman*)]
		\item There exists $ \delta_0\in \left(0,\frac12 \right)  $ such that for all $ \delta\in(0,\delta_0) $ the function $ \lambda \mapsto W_\beta(\lambda,\lambda,\delta) $ is increasing on the interval $ [\Lambda,\infty) $.\label{lem_catenoidSphereShrinking_lambdaMonotonicity}
		\item For fixed $ \lambda \geq \Lambda $ we have $$ \lim\limits_{\delta\searrow 0} W_\beta(\lambda,\lambda,\delta) =  W_\beta(\lambda,\lambda,0) < 4\pi. $$\label{lem_catenoidSphereShrinking_deltaToZero}
		\item For fixed $ \delta\in\left(0,\frac12\right) $ we have 
		\begin{align*}
			\lim\limits_{\lambda\to\infty} W_\alpha(\lambda,\lambda,\delta) =
			\lim\limits_{\lambda\to\infty} W_\beta(\lambda,\lambda,\delta) =  4\pi.
		\end{align*}\label{lem_catenoidSphereShrinking_lambdaToInfty}
	\end{enumerate}
\end{restatable}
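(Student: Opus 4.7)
The plan is to split the Willmore energy of a catenoid sphere $f\in\CatSph_\beta(\lambda,\lambda,\delta)$ into contributions from the catenoid neck ($|p|\in[1/\lambda,1-\delta]$), the gluing annulus $A:=\Ann[1-\delta,1+\delta]$, and the spherical region (including the closed hemisphere):
\begin{equation*}
W_\beta(\lambda,\lambda,\delta) = W_{\mathrm{cat}} + W_{\mathrm{glue}}(\lambda,\delta) + W_{\mathrm{sph}}(\lambda,\delta).
\end{equation*}
The catenoid is minimal, hence $W_{\mathrm{cat}}=0$. A direct integration on a round sphere of radius $\lambda$ with a south-polar cap of base radius $1+\delta$ removed yields
\begin{equation*}
W_{\mathrm{sph}}(\lambda,\delta) = 2\pi\Bigl(1+\sqrt{1-(1+\delta)^2/\lambda^2}\Bigr),
\end{equation*}
which at $\delta=0$ reduces to the boundary value stipulated in Definition~\ref{def_CatSph}.

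The technical core is a sharp estimate of $W_{\mathrm{glue}}(\lambda,\delta)$. Writing out $u_1(r) = \lambda^{-1}[\cosh^{-1}(\lambda r)-\cosh^{-1}(\lambda)]$ and $u_2(r) = \sqrt{\lambda^2-1}-\sqrt{\lambda^2-r^2}$, one checks $u_1(1)=u_2(1)=0$ and $u_1'(1)=u_2'(1)=1/\sqrt{\lambda^2-1}$: the catenoid and sphere meet tangentially at $r=1$ because $R=\lambda$. Taylor expansion then gives
\begin{equation*}
\bigl\| u_1^{(2-k)}-u_2^{(2-k)}\bigr\|_{C^0(A)} = O(\delta^k/\lambda), \quad k=0,1,2.
\end{equation*}
Inserting these bounds, together with $|\phi^{(k)}|\le M\delta^{-k}$, into the rotationally symmetric identity
\begin{equation*}
\tilde u'' + \tilde u'/r = (u_1''+u_1'/r) + \phi\bigl[(u_2''-u_1'') + (u_2'-u_1')/r\bigr] + \phi'\bigl[2(u_2'-u_1') + (u_2-u_1)/r\bigr] + \phi''(u_2-u_1),
\end{equation*}
I observe that every $\phi^{(k)}$ is paired with a term of size $\delta^k/\lambda$, so the $\delta^{-k}$ blow-ups cancel and $|\tilde u''+\tilde u'/r|\le C/\lambda$ on $A$ with $C$ independent of $\delta$ (for $\delta$ small). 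Since $|\tilde u'|=O(1/\lambda)$, the mean curvature of the rotationally symmetric graph satisfies $|H|=O(1/\lambda)$, and integration over $A$ (of area $4\pi\delta$) gives the $\delta$-uniform estimate
\begin{equation*}
W_{\mathrm{glue}}(\lambda,\delta) \le C\delta/\lambda^2.
\end{equation*}

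From here (ii) and (iii) follow easily. For (ii), both $W_{\mathrm{glue}}\to 0$ and $W_{\mathrm{sph}}(\lambda,\delta)\to 2\pi(1+\sqrt{1-1/\lambda^2})<4\pi$ as $\delta\to 0$. For (iii) and type $\beta$, $W_{\mathrm{sph}}\to 4\pi$ while $W_{\mathrm{glue}}\to 0$ as $\lambda\to\infty$; for type $\alpha$ the catenoid and sphere are not tangent at $r=1$, but one checks directly that each of $u_1$, $u_2$ converges individually to $0$ in $C^2(A)$ at rate $O(1/\lambda)$, so Lemma~\ref{lem_gluingLemma} at fixed $\delta$ still yields $W_{\mathrm{glue}}\to 0$.

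Part (i) is the main obstacle. Differentiating the explicit formula for $W_{\mathrm{sph}}$ gives $\partial_\lambda W_{\mathrm{sph}}\ge 2\pi(1+\delta)^2/\lambda^3$ for $\lambda>1+\delta$. I would then repeat the cancellation analysis above after applying $\partial_\lambda$ to the Willmore integrand on $A$: each $\lambda$-derivative of $u_i$ or its spatial derivatives brings in one additional $1/\lambda$ factor, and the same tangency-driven cancellation produces $|\partial_\lambda W_{\mathrm{glue}}|\le C\delta/\lambda^3$ with $C$ independent of $\delta\in(0,1/2)$. Combining,
\begin{equation*}
\partial_\lambda W_\beta(\lambda,\lambda,\delta) \ge \frac{1}{\lambda^3}\bigl(2\pi(1+\delta)^2-C\delta\bigr)>0
\end{equation*}
for $\delta<\delta_0:=\min(1/2,\pi/C)$ and $\lambda\ge\Lambda$ with $\Lambda$ chosen large enough to validate the asymptotic error bounds. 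The essential subtlety is uniformity in $\delta$: a direct application of Lemma~\ref{lem_gluingLemma} only bounds $W_{\mathrm{glue}}$ by $C(M,\delta)\|u_1-u_2\|_{C^2(A)} = O(1/\lambda)$ with a $\delta$-dependent prefactor that blows up as $\delta\to 0$, too crude to dominate the $O(1/\lambda^3)$ growth of $W_{\mathrm{sph}}$. Exploiting the tangential contact at $R=\lambda$ to trade $\delta^{-k}$ factors for $\lambda^{-k}$ factors is the key additional input required to close the monotonicity argument.
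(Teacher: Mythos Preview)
Your proposal is correct and follows essentially the same route as the paper: decompose $W_\beta=W^{\mathrm{cap}}+W^{\mathrm{glue}}$ (catenoid contribution vanishes), compute $W^{\mathrm{cap}}$ explicitly, and exploit the tangential contact at $r=1$ for $R=\lambda$ to obtain the $\delta$-uniform bounds $W^{\mathrm{glue}}_\beta\le C\delta/\lambda^2$ and $|\partial_\lambda W^{\mathrm{glue}}_\beta|\le C\delta/\lambda^3$ via Taylor expansion, then compare the latter against $\partial_\lambda W^{\mathrm{cap}}\sim 2\pi(1+\delta)^2/\lambda^3$. The paper carries out precisely these computations in its appendix Lemma~\ref{lem_catenoidSpheresShrinkingAppendix}; the only cosmetic difference is that it records the one-sided bound $\partial_\lambda W^{\mathrm{glue}}_\beta\ge -C\delta/\lambda^3$ (using a sign observation for $\partial_\lambda h'$) rather than your two-sided bound, and for type $\alpha$ in (iii) it records the sharper $W^{\mathrm{glue}}_\alpha\le C/(\delta\lambda^2)$ instead of invoking Lemma~\ref{lem_gluingLemma} at fixed $\delta$---both variants suffice.
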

\begin{proof}
	The Willmore energy of the catenoid vanishes and hence the Willmore energy of the catenoid sphere is the sum 
	\begin{align*}
		W_{\alpha/\beta}(\lambda,\lambda,\delta) = W^{cap}(\lambda,\delta) + W^{glue}_{\alpha/\beta}(\lambda,\delta),
	\end{align*}
	where 
	\begin{align*}
		W^{cap}(\lambda,\delta)&=2\pi \left( 1 + \sqrt{1 - \frac{(1+\delta)^2}{\lambda^2}} \right)
	\end{align*}
	is the energy of the spherical cap without the gluing region, that is, a round sphere of radius $ \lambda $ with a small cap removed whose boundary circle has radius $ 1+\delta $. And $ W^{glue}_{\alpha/\beta}(\lambda,\delta) $ is the energy of the gluing region (the graph over $ \Ann[1-\delta, 1+\delta] $).
	We observe for the cap (recall Definition~\ref{def_CatSph})
	\begin{align}
		\lim_{\delta\searrow 0} W_{cap}(\lambda,\delta) &= W_{\alpha/\beta}(\lambda,\lambda,0),\label{eq_WCapEstimatedDeltaToZero}\\
		\lim_{\lambda\to\infty} W_{cap}(\lambda,\delta) &= 4\pi.\label{eq_WCapEstimatedLambdaToInfty}
	\end{align}
	Using the explicit parametrizations of the sphere and catenoid and the estimations $ \abs{\phi}\leq 1 $, $ \abs{\phi'} \leq \frac{M}{\delta} $ and $ \abs{\phi''} \leq \frac{M}{\delta^2}$ of the gluing function one can show that there exists constants $ \Lambda,C>0 $ such that for all $ \lambda\geq \Lambda $ and $ \delta\in(0,\frac12) $ we have
	\begin{align*}
		W^{glue}_{\alpha}(\lambda,\delta)& \leq C\frac{1}{\delta\lambda^2},\\
		W^{glue}_{\beta}(\lambda,\delta)& \leq C \frac{\delta}{\lambda^2},\\
		\partial_\lambda W^{glue}_{\beta}(\lambda,\delta) &\geq - C \frac{\delta}{\lambda^3}.
	\end{align*}
	See Lemma~\ref{lem_catenoidSpheresShrinkingAppendix} for a proof of these estimations.
	Now, the statements~\ref{lem_catenoidSphereShrinking_deltaToZero} and~\ref{lem_catenoidSphereShrinking_lambdaToInfty} follow immediately and to prove \ref{lem_catenoidSphereShrinking_lambdaMonotonicity} we set $ \delta_0:=\frac{2\pi}{C} $ to obtain for all $ \lambda\geq \Lambda $ and $ \delta\in(0,\delta_0) $
	\begin{align*}
		\partial_\lambda W^{glue}_{\beta}(\lambda,\delta) 
		&\geq - C \frac{\delta}{\lambda^3} \\
		&> - \frac{2\pi}{\lambda^3} \\
		&> -\frac{2\pi(1+\delta)^2}{\lambda^2 \sqrt{\lambda^2-(1+\delta)^2}}\\
		&= -\partial_\lambda W^{cap}(\lambda,\delta),
	\end{align*}
	which yields $ \partial_\lambda W_\beta(\lambda,\lambda,\delta)>0  $ and thus statement~\ref{lem_catenoidSphereShrinking_lambdaMonotonicity}.
\end{proof}
\begin{rem}
	In contrast to Lemma~\ref{lem_catenoidSphereShrinking}~\ref{lem_catenoidSphereShrinking_deltaToZero}, we have $ W_\alpha(\lambda,\lambda,\delta)>4\pi $ for all admissible $ \lambda $ and $ \delta $. This is due to the fact that any catenoid sphere of type $ \alpha $ builds an immersed sphere of the family $ J $ with a copy of its own. And these immersions all have Willmore energy greater than $ 8\pi $ (see Remark~\ref{rem_JW>8pi}).
\end{rem}

\subsection{Global Classification of Singularities below $ 12\pi $}\label{sec_surgery}
The aim of this section is to prove Theorem~\ref{thm_globalSing} by giving a global classification of singularities below $ 12\pi $. By this we do not mean to classify all singular Willmore flows at their singular time, but rather to find a small set of surfaces to which every Willmore flow can be deformed without increasing the Willmore energy. This approach relies on the classification of blow-ups of singular Willmore flows below $ 16\pi $ in \cite{LN}. The blow-up procedure was introduced in \cite{KS_SmallInitial}. We refer to the descriptions in \cite[Section~3.3]{KS_Survey} and \cite[Section~4]{LN}. Consider a maximal Willmore flow $ f:\S^2\times [0,T) $ with $ T\in(0,\infty] $. If $ f_t $ does not converge to a round sphere as $ t\to T $, then there exist sequences $ t_j\in[0,T) $, $ r_j>0 $, $ x_j\in\R^3 $ with $ t_j\to T $ and a rescaled flow
\begin{align}\label{eq_blowup}
	f^j:\S^2 \times \left[ -\frac{t_j}{r_j^4}, \frac{T-t_j}{r^4_j} \right) \to \R^3, \qquad f^j(p,\tau) = \frac{1}{r_j}\left( f(p,t_j+r^4_j\tau) -x_j \right)
\end{align}
\begin{thmCite}\label{thm_LN}\text{(\cite[Theorem~1.6]{LN})}
	If $ W(f_0)<16\pi $, then $ f^j(\cdot,0) $, as above, converges locally smoothly after appropriate reparametrization to either a catenoid, trinoid or Enneper's minimal surface. If $ W(f_0)\leq 12\pi $, then only the catenoid case appears.
\end{thmCite}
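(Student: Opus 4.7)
The plan is to prove this via a parabolic blow-up analysis adapted to the Willmore flow, mirroring the strategy Kuwert--Schätzle used for small-energy initial data, and then invoking the classification of complete finite-energy Willmore surfaces in $\R^3$ of genus zero.

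\textbf{Step 1: Extract a concentration point and scale.}
Since $f$ does not converge smoothly to a round sphere, the Kuwert--Schätzle lifespan/concentration estimate fails at $T$: there is a universal $\eps_0>0$ such that $\limsup_{t\to T}\sup_{x\in\R^3}\int_{B_\rho(x)}\abs{A_{f_t}}^2\,d\mu_{f_t}\geq\eps_0$ for every $\rho>0$. Using this, choose $t_j\to T$, $x_j\in\R^3$, and radii $r_j>0$ so that the concentration ``just'' equals $\eps_0$ on the ball $B_{r_j}(x_j)$ at time $t_j$ while remaining below $\eps_0$ on all strictly smaller balls uniformly. Define the rescaled flow $f^j$ as in \eqref{eq_blowup}. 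By construction, at time $0$ the rescaled flow exhibits nontrivial curvature concentration at the origin but is $\eps_0$-small on every ball of radius less than $1$ outside the origin.

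\textbf{Step 2: Local smooth convergence to a limit flow.}
Apply the Kuwert--Schätzle interior higher-order regularity estimates: whenever the local curvature energy is below $\eps_0$, all derivatives of $A$ are controlled on shrunken parabolic cylinders. After a suitable subsequence and reparametrization, $f^j$ converges in $C^k_{\mathrm{loc}}$ on parabolic cylinders to a limit flow $f^\infty:\Sigma^\infty\times(a,b)\to\R^3$. The domain $\Sigma^\infty$ is a complete, properly immersed surface of genus zero.

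\textbf{Step 3: The limit is stationary.}
Monotonicity yields $\int_0^T \norm{\nabla W(f_t)}_{L^2}^2\,dt\leq W(f_0)-\inf W<\infty$. Under the parabolic rescaling by $r_j$, the time-integrated $L^2$-norm of the gradient on any fixed parabolic cylinder picks up a factor that tends to $0$ as $r_j\to 0$. Hence $f^\infty$ solves $\nabla W(f^\infty_\tau)\equiv 0$ for almost every $\tau$, and by smoothness it is a Willmore immersion at every time and time-independent.

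\textbf{Step 4: Classify the limit.}
The time-$0$ slice $f^\infty_0$ is a complete, properly immersed, genus-zero Willmore surface in $\R^3$ with $W(f^\infty_0)\leq\liminf W(f^j_0)\leq W(f_0)<16\pi$. Invoke the removable-singularity/compactification theory for finite-energy Willmore surfaces (Bryant, Kuwert--Schätzle): after one-point compactification at each end via an appropriate Möbius transformation, $f^\infty_0$ yields a smooth (possibly branched) Willmore sphere in $S^3$ whose energy, counted with ends, remains below $16\pi$. Bryant's classification of genus-zero Willmore spheres forces $f^\infty_0$ to be one of: a plane (excluded, since no curvature could concentrate), the catenoid (two embedded planar ends, compactified energy $8\pi$), the trinoid (three embedded planar ends, compactified energy $12\pi$), or Enneper's minimal surface (one end of multiplicity three, compactified energy $12\pi$).

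\textbf{Step 5: The refinement for $W(f_0)\leq 12\pi$.}
Here one must show that the trinoid and Enneper cases cannot occur. The idea is a concentration-compactness splitting for the Willmore energy along the flow: the total energy at time $t_j$ decomposes, up to $o(1)$, as the energy of the rescaled ``bubble'' (the compactified limit of $f^j_0$) plus the energy of the residual weak limit (or of the complement region). The residual piece is a non-degenerate immersed sphere and so contributes at least $4\pi$ by the absolute Li--Yau bound. Thus a trinoid or Enneper bubble, which contributes $12\pi$ to the compactified count, would force $W(f_0)\geq 12\pi+4\pi=16\pi$. Under $W(f_0)\leq 12\pi$ only the catenoid ($8\pi$) is consistent with the accounting, since $8\pi+4\pi=12\pi$.

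\textbf{Main obstacle.}
Steps 1--3 are by now standard machinery for the Willmore flow. The genuine difficulty is Step 5: establishing the sharp energy splitting, i.e.\ that the ``lost'' energy in the rescaling procedure is at least $4\pi$. This requires combining the monotonicity formula with a careful Struwe-type decomposition for the fourth-order flow, handling multiple possible concentration points simultaneously, and controlling the neck regions connecting the bubble to the residual surface via the $\eps_0$-regularity. Step 4 also has a nontrivial component in the removable-singularities analysis for noncompact Willmore surfaces with only $C^k_{\mathrm{loc}}$-control, which must be upgraded to global finite total curvature before Bryant's classification applies.
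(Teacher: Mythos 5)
This statement is a cited theorem (\cite[Theorem~1.6]{LN}), not something the paper proves: the paper invokes Lamm--Nguyen's classification as a black box. So there is no ``paper's own proof'' to compare against, and your proposal is really an attempt to reconstruct Lamm--Nguyen's argument. Evaluated on those terms, the rough shape (parabolic rescaling, $\eps_0$-regularity giving $C^k_{\mathrm{loc}}$ convergence, monotonicity forcing the limit to be stationary) is right, but there are two substantive gaps.

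First, your Step~4 classifies the limit as a compactified \emph{Willmore} sphere and invokes Bryant. But the theorem asserts the blow-up is a \emph{minimal} surface — catenoid, trinoid, or Enneper's surface — and that is strictly stronger than ``critical for $W$''. Showing the blow-up is minimal (roughly: the concentration is in $\abs{A^0}^2$, while $\int H^2$ on the blow-up region tends to zero, so the limit has $H\equiv 0$ and complete finite total curvature with planar ends) is a separate and essential step your outline never addresses. Once minimality is in hand, the right classification tool is not Bryant but L\'opez's theorem on complete genus-zero minimal surfaces of small total curvature (the paper cites exactly this), under a total-curvature bound coming from the Willmore energy bound; your route through Bryant would still leave the question of why the blow-up is minimal rather than a general Willmore sphere.

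Second, your Step~5 accounting ``$12\pi + 4\pi = 16\pi$'' conflates distinct quantities. The trinoid/Enneper bubble has \emph{zero} Willmore energy (it is minimal); the ``$12\pi$'' is the energy of its compactification, which lives at the punctures, not on the blow-up region. The correct mechanism is that a trinoid (three simple ends) or Enneper (one end of multiplicity three) has density three at infinity, which by upper semicontinuity and the Li--Yau inequality forces $W(f_{t_j}) \geq 12\pi$ for $t_j$ near the singular time, and since $W$ is strictly decreasing along the flow unless $f_0$ is critical (and there are no immersed Willmore spheres at $12\pi$ by Bryant) this contradicts $W(f_0)\leq 12\pi$. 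By contrast, the additive ``bubble $\geq 12\pi$ plus residual sphere $\geq 4\pi$'' splitting you invoke would require an energy-quantization / no-neck-loss result for the fourth-order Willmore flow that you do not establish and that is not how the Lamm--Nguyen argument runs.
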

Here, \textit{trinoid} refers to any complete genus zero minimal surfaces of total curvature $ -8\pi $ with three ends, as classified in \cite[Theorem~3]{Lopez_CompleteMSTotCurv}.
The details of the convergence in this case are as follows (cf. \cite[Section~4]{KS_SmallInitial}, \cite[Section~3.3]{KS_Survey}). There exists a two-dimensional manifold $ \hat{\Sigma}=\S^2\setminus\{p_1,\dots,p_k\} $ and a limit immersion $ \hat f : \hat \Sigma \to \R^3 $, i.e.\ a catenoid ($ k=2 $), trinoid ($ k=3 $) or Enneper's surface ($ k=1 $). Define the domains $ S_j:=(f^j)^{-1}(B_j(0)) $ and $ \hat S_j:=\hat f^{-1}(B_j(0)). $
There exist open sets $ U_j\subseteq \S^2 $ such that for all $ R>0 $ there exists $ j_R\in\N $ and we have for all $ j\geq j_R $ that $ f^{-1}(B_R(0)) \subseteq U_j $. There exist diffeomorphisms $ \varphi_j: \hat S_j \to U_j $
and a function $ u_j\in C^\infty(\hat{S}_j,\R) $ which satisfy, with some fixed normal $  \hat \nu $ of $ \hat f $, for all $ k\in\N $
\begin{align*}
	f^j \circ \varphi_j = \hat{f} + u_j \hat \nu,\\
	\lim_{j\to \infty} \norm{\hat \nabla ^k u_j}_{L^\infty\left(\hat S_j\right)} = 0.
\end{align*}
This implies the following. For all $ \lambda>0 $, the functions 
$ g_j := f^j \circ \varphi_j : \hat f^{-1}(\overline{B_\lambda(0)}) \to \R^3 $ are defined for sufficiently large $ j\in\N $ and satisfy for all $ k\in\N $
\begin{align}
	\lim_{j\to\infty}\norm{g_j - \hat f}_{C^k\left(\hat S_\lambda,\R^3\right)} =0.
\end{align}
Here and in the following the $ C^k $ norms of immersions defined on subsets of $ \S^2 $ are defined by fixing charts of $ \S^2 $.
\begin{rem}
	In the following we construct a regular homotopy with controlled Willmore energy. By scaling invariance of the Willmore energy, we can scale any homotopy $ f_t $ by a factor $ \lambda_t $ without changing the Willmore energy. Furthermore, the set of diffeomorphisms on $ \S^2 $ has two homotopy classes, orientation preserving and orientation reversing reparametrizations \cite{Smale_DiffS2}. Thus, we can always find a regular homotopy of constant Willmore energy from one immersion $ f $ to a reparametrized $ f \circ \varphi $ of same orientation. In the following we implicitly scale or reparametrize by a homotopy whenever needed.
\end{rem}
\begin{prop}\label{prop_gluingA}
	Let $ f_0:\S^2\to\R^3 $ be a smooth immersion with $ W(f_0)\leq 12\pi $ and $ \delta\in\left(0,\frac14\right) $. Then, there exists $ \Lambda>0 $ such that for all $ \lambda,R\geq \Lambda $ there exists a regular homotopy $ H:\S^2\times [0,1]\to \R^3 $ satisfying the following.
	\begin{enumerate}[label=\roman*)]
		\item $ H_0=f_0 $.
		\item $ W(H_t)<W(f_0) $ for all $ t\in (0,1] $.
		\item Either $ H_1 $ is a round sphere or there exist two closed topological disks $ S_1,S_2\subseteq \S^2 $ with $ S_1\cup S_2 = \S^2 $ and $ S_1\cap S_2 =\partial S_1 = \partial S_2$ such that $ H_1|_{S_1},H_1|_{S_2} \in \CatSph(\lambda,R,\delta) $.
	\end{enumerate} 
\end{prop}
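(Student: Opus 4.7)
The plan is to use the Willmore flow as the primary driver of the homotopy and perform surgery when the flow develops a singularity. If $f_0$ is already a round sphere the statement is trivial, so assume otherwise; then $W$ strictly decreases along the Willmore flow since the only critical points with $W\leq 12\pi$ are round spheres. Let $f:\S^2\times[0,T)\to\R^3$ be the maximal flow starting at $f_0$. If $f_t$ converges smoothly to a round sphere as $t\to T$, then a suitable reparametrization of this flow yields the required homotopy $H$ with $H_1$ a round sphere, and the strict decrease of $W$ along the flow gives condition (ii).

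Otherwise I apply Theorem~\ref{thm_LN}: the rescaled sequence $f^j$ from \eqref{eq_blowup}, after reparametrizations $\varphi_j$, converges locally smoothly to a catenoid $\hat f$. I fix $j$ large enough so that $g_j:=f^j(\cdot,0)\circ\varphi_j$ is arbitrarily $C^2$-close to $\hat f$ on a compact piece $\hat f^{-1}(\overline{B_\lambda(0)})$ and so that $W(f_{t_j})<W(f_0)$ by a definite margin. After the rescaling the catenoid has its waist near radius $1$; I fix a thin annular band $C\subseteq\S^2$ whose image covers the waist region, so that $\S^2\setminus C=D_1\sqcup D_2$ splits into two topological disks, and I use Definition~\ref{def_gluing} to replace the catenoid piece over $C$ by a flat disk. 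This produces two closed immersed spheres $g_1,g_2:\S^2\to\R^3$, each being one of the original sides capped by a flat disk. By Lemma~\ref{lem_gluingLemma} the gluing energy increase is bounded by a multiple of $\norm{g_j-\hat f}_{C^2}$, which I make arbitrarily small. Combined with the Li--Yau inequality $W(g_i)\geq 4\pi$ and $W(f_0)\leq 12\pi$, this forces $W(g_i)<8\pi$ strictly.

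Theorem~\ref{thm_KS8pi} then yields a smooth Willmore flow from each $g_i$ converging to a round sphere. I assemble $H$ in four stages: (a) the Willmore flow from $f_0$ to $f_{t_j}$, which strictly decreases energy; (b) a short surgery interval using Definition~\ref{def_gluing} to continuously deform the catenoid region into two flat caps, producing $g_1$ and $g_2$ joined along a circle; (c) the two Willmore flows of $g_1$ and $g_2$ run simultaneously on the two sides of $\S^2$, with a catenoid neck of scale $1/\lambda$ continuously re-inserted between them (again via Definition~\ref{def_gluing}); (d) a final reparametrization and rescaling so that at $t=1$ we obtain two hemispheres of radius $R$ glued smoothly onto a catenoid of scale $1/\lambda$, giving $H_1\in\CatSph(\lambda,R,\delta)$.

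\textbf{Main obstacle.} The central difficulty is to ensure that $W(H_t)<W(f_0)$ \emph{strictly} at every $t\in(0,1]$, despite the small positive energy contributions introduced by the gluing steps. I plan to exploit Lemma~\ref{lem_gluingLemma} to make these contributions arbitrarily small (by taking $j$ large and hence $\norm{g_j-\hat f}_{C^2}$ tiny) and to dominate them by the strict energy decrease from the Willmore flow in stages (a) and (c). Matching the prescribed parameters $(\lambda,R,\delta)$ at time $t=1$ is handled by scale invariance together with part~\ref{lem_catenoidSphereShrinking_lambdaToInfty} of Lemma~\ref{lem_catenoidSphereShrinking}: since $W_\beta(\lambda,\lambda,\delta)\to 4\pi$ as $\lambda\to\infty$, the total energy of the target catenoid sphere configuration approaches $8\pi$, which is strictly below $W(f_0)>8\pi$ in the singular case (the inequality $W(f_0)>8\pi$ itself follows from Theorem~\ref{thm_KS8pi}, since otherwise the flow would already converge to a round sphere).
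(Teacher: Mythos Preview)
Your overall strategy matches the paper's: run the Willmore flow, invoke Theorem~\ref{thm_LN} to find a catenoid neck, cap each side with a flat disk to obtain two auxiliary spheres of energy below $8\pi$, flow those to round spheres by Theorem~\ref{thm_KS8pi}, and re-attach them to a catenoid of scale $1/\lambda$. The energy-budget idea (absorb all gluing errors into the strict drop $W(f_0)-W(f_{t_j})$) is also exactly what the paper does.

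There are, however, two genuine gaps.

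\textbf{First, the role of $g_1,g_2$ in the homotopy.} Your stage~(b) proposes to ``continuously deform the catenoid region into two flat caps'', with $g_1,g_2$ ``joined along a circle'', and then in stage~(c) run the Willmore flow on $g_1,g_2$. But the Willmore flow is defined for closed surfaces, so $g_1,g_2$ must be closed spheres, and a closed sphere glued to another closed sphere along a circle is not an immersion of $\S^2$. The paper resolves this cleanly: the capped spheres $H^i_0$ are \emph{auxiliary} objects, never a time-slice of $H$. At every time the actual homotopy is assembled from three pieces glued along annuli: the catenoid segment $H^c_t$ (a straight-line homotopy from $f_{t_0}|_C$ to the exact catenoid $\hat f$) and the \emph{restrictions} $H^i_t|_{D_i}$ of the two auxiliary Willmore flows. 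No ``remove neck, then re-insert neck'' step occurs.

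\textbf{Second, and more seriously, graphicality along the auxiliary flows.} For the gluing in stage~(c) to make sense and for Lemma~\ref{lem_gluingLemma} to apply, at every time $t$ the pieces $H^i_t$ must restrict to graphs over the fixed annulus $A$ with small $C^2$-norm. Nothing forces this: the Willmore flow of $g_i$ may drift, rotate, shrink to a small sphere, or develop large curvature somewhere. The paper's Step~5.2 is devoted entirely to this point. One fixes a point $p$ with horizontal tangent, translates and rotates so that $F_t(p)=0$ and $T_pF_t=\R^2\times\{0\}$ for all $t$, and then uses the compactness of $\S^2\times[0,1]$ to bound $\|A_t\|$ uniformly and invoke Lemma~\ref{lem_uniformGraphRadius} to obtain a uniform graph radius $r$. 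One then \emph{prepends a rescaling homotopy} $\mu\mapsto\mu F_0$ with $\bar\mu$ large enough (depending on $A_{\max}$ and the target $\rho$) so that the rescaled flow stays a flat graph over $A$ for all $t$. Your proposal does not address this; without it, the gluing and the energy estimate in stage~(c) are not justified.
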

\begin{proof}
	Let $ f:\S^2\times [0,T) \to \R^3 $ be the maximal Willmore flow with initial surface $ f_0 $ and $ T\in (0,\infty] $. Fix some $ s_0\in (0,T)  $ and set 
	$$ \omega:=W(f_0)-W(f_{s_0})>0. $$ 
	Let $ A:=\Ann[1-\delta,1+\delta] $ and $\rho>0 $ sufficiently small such that by Lemma~\ref{lem_gluingLemma} the gluing excess Willmore energy for a $ \delta $-gluing between two graphs $ g_1,g_2 \in C^2(A,\R^3) $ with $ \norm{g_i}_{C^2(A)}\leq 1 $ $ \norm{g_1 - g_2}_{C^2(A,\R^3)}<\rho $ is less than $ \frac{\omega}{5} $.
	
	\underline{Step 1:} Prepare catenoids and spheres.\\
	Fix $ \Lambda>0 $ sufficiently large such that for $ \lambda,R\geq \Lambda $ catenoids of scale $ \frac{1}{\lambda} $, rotation axis $ (0,0,1) $ and center $ (0,0,c) $ with $ c\in\left[0,\lambda^{-1}\cosh(\lambda)\right] $, and spheres of radius $ R $ and center $ (0,0,m) $ with $ m\in\left[\sqrt{R^2-1},R\right] $ (see \eqref{eq_catenoid} and \eqref{eq_sphere} for $ c=\lambda^{-1}\cosh(\lambda) $ and $ m=\sqrt{R^2-1} $) have graph parametrizations $ u_c, u_s: A \to \R $ over $ A\times\{0\} $, respectively, which satisfy
	\begin{align}
		\norm{u_c}_{C^2\left(A\right)}<\frac{\rho}{8},\label{eq_catCloseToFlatDisk}\\
		\norm{u_s}_{C^2\left(A\right)}<\frac{\rho}{8}.\label{eq_sphereCloseToFlatDisk}
	\end{align}

	\underline{Step 2:} Fix domains and notation for gluing.\\
	Let $ S_1,S_2\subseteq \S^2 $ be the northern and southern closed hemisphere and fix a closed annulus $ C\subseteq \S^2 $ and two disjoint closed disks $ D_1\subseteq S_1,D_2\subseteq S_2 $ such that they overlap in annuli $ C_i:=D_i \cap C $ with nonempty interior (see Figure~\ref{fig_domainsOnS2}).
	To make the gluings of immersions along their graphs over the annulus $ A $ in the following well-defined, we fix once diffeomorphisms $ \psi^+_i,\psi^-_i: A \to C_i $ which are orientation preserving and reversing, respectively.
	We say that an immersion $ h:C_i\to \R^3 $ is a graph over $ A\times\{0\}\subseteq \R^3 $ if $ h\circ \psi^+_i $ or $ h\circ \psi^-_i $ is a graph on $ A $, i.e.\ $ h\circ \psi^{\pm}(p) = (p,u_h(p)) $ for some $ u_h:A\to\R $\com{$ \psi^+ $ or $ \psi^- $ depending on the orientation of $ h $}. In the following, before each gluing, we implicitly assume that by orientation preserving reparametrizations the involved immersions are graphs over $ A $ in the way just defined. The resulting gluing is always an immersion. Outside of the gluing region we recover the involved immersions and inside we obtain a graph, in particular an immersion. \com{Note that we always glue immersed disks to each other along an annulus overlap so that the obtained gluing is an immersed sphere and not of other topology. 
	}
	\begin{figure}[H]
		\centering
		\resizebox{0.8\textwidth}{!}{
			\begin{tikzpicture}
				\node[anchor=center] at (-0.2,-0.25) {\includegraphics[width=10cm]{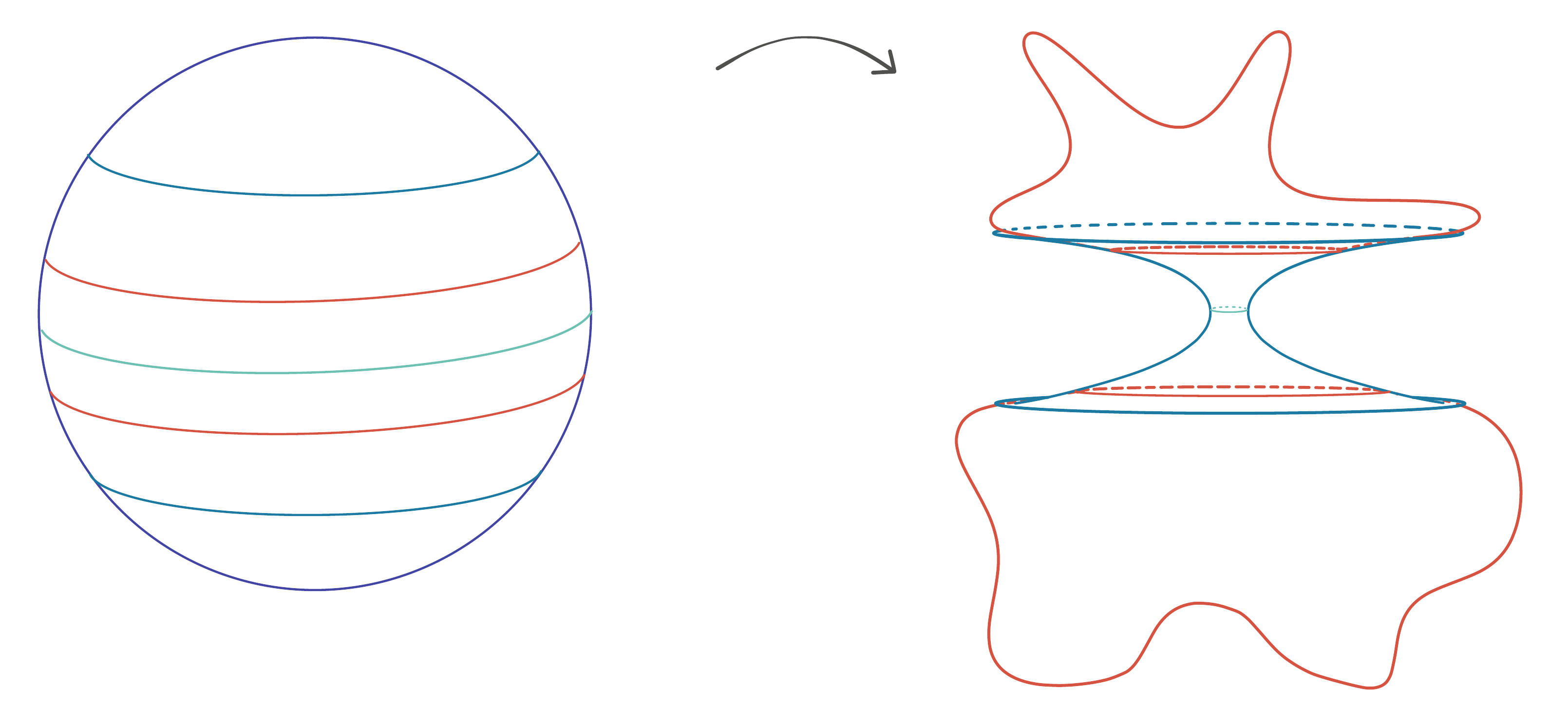}};
				
				\def\ampl{7pt}
				
				\foreach \cx/\cy/\length/\lab in 
				{
					-5/0/2.3/$C$
				} 
				{
					\draw[decorate,decoration={brace,amplitude=\ampl}, mypastel] (\cx,\cy-0.5*\length) -- (\cx,\cy+0.5*\length);
					\node[anchor=east] at (\cx-0.1,\cy) {\small \textcolor{mypastel}{\lab}};
				}
				
				\foreach \cx/\cy/\length/\lab in 
				{
					-5.8/ 0.9/1.7/$S_1$,
					-5.8/-0.9/1.7/$S_2$
				} 
				{
					\draw[decorate,decoration={brace,amplitude=\ampl}, black] (\cx,\cy-0.5*\length) -- (\cx,\cy+0.5*\length);
					\node[anchor=east] at (\cx-0.1,\cy) {\small \textcolor{black}{\lab}};
				}
				
				\foreach \cx/\cy/\length/\lab in 
				{
					-1.4/ 1.1/1.4/$D_1$,
					-1.4/-1.1/1.4/$D_2$
				} 
				{
					\draw[decorate,decoration={brace,mirror,amplitude=\ampl}, myred] (\cx,\cy-0.5*\length) -- (\cx,\cy+0.5*\length);
					\node[anchor=west] at (\cx+0.15,\cy) {\small \textcolor{myred}{\lab}};
				}
				\node[anchor=center] at (0,2) {\small \textcolor{black}{$f_{t_0}$}};
				\node[anchor=center] at (-3.2,0.45) {\small \textcolor{myblue}{$C_1$}};
				\node[anchor=center] at (-3.2,-1) {\small \textcolor{myblue}{$C_2$}};
			\end{tikzpicture}
		}
		\caption{The introduced domains on $ \S^2 $ and the immersion $ f_{t_0} $.}\label{fig_domainsOnS2}
	\end{figure}
	\underline{Step 3:} Fix a time right before the singular time.\\
	Fix $ \lambda\geq \Lambda $ and $ \hat f: C \to \R^3 $ which is a parametrization of appropriate orientation\com{which is compatible with $ f_{t_0} $} of a (precise) catenoid piece of scale $ \frac{1}{\lambda} $ in $ \overline{B_2(0)}\subseteq \R^3 $ 
	such that $ \hat f|_{S_1} $ parametrizes the part in the upper half-space and $ \hat f|_{C_i} $ are graphs over $ A\times \{0\} $. Then, by Theorem~\ref{thm_LN}, there exists $ t_0\in (s_0,T) $ such that with appropriate reparametrization and rescaling of $ f_{t_0} $ the error $ \rho_1:=\norm{f_{t_0} - \hat{f}}_{C^2(C,\R^3)} $ is sufficiently small. We choose it so small that $ f_{t_0}|_{C_i} $ are graphs over $ A\times \{0\} $ (see Figure~\ref{fig_domainsOnS2}) and their graph parametrizations $ v^i_{t_0} $ of $ f_{t_0}|_{C_i} $ and $ \hat u^i $ of $ \hat f|_{C_i} $ in the upper half-space ($ i=1 $) and in the lower half-space ($ i=2 $), satisfy 
	\begin{align}
		\norm{v^i_{t_0}-\hat u^i}_{C^2\left(A\right)}<\frac{\rho}{8}.\label{eq_blowUpCloseToCatenoid}
	\end{align}
	Furthermore, $ \rho_1 $ is chosen such that all $ g:C\to \R^3 $ with $ \norm{\hat f - g}_{C^2(C,\R^3)}\leq \rho_1 $ satisfy 
	\begin{equation}
		W(g)<W(\hat{f})+\frac{\omega}{5}=\frac{\omega}{5}.\label{eq_straightLineToCat}
	\end{equation} 
	
	\underline{Step 4:} Construct the final homotopy.\\
	We construct three regular homotopies
	\begin{align*}
		H^1&:\S^2\times[0,1]\to\R^3,\\
		H^c&:C \times[0,1]\to\R^3,\quad (p,t)\mapsto f_{t_0}(p)+t\left(\hat{f}(p)-f_{t_0}(p)\right),\\
		H^2&:\S^2\times[0,1]\to\R^3,
	\end{align*}
	where $ H^1,H^2 $ are defined later (see Step 5) and satisfy all of the following (see also Figure~\ref{fig_catenoidSurgery}).
	\begin{enumerate}[label=\roman*)]
		\item $ H^i_0|_{D_i}= f_{t_0}|_{D_i}  $,\label{enum_WFfirst}
		\item $ W(H^i_0) < W(f_{t_0}|_{D_i}) + \frac\omega5, $\label{enum_WFlowEnergy}
		\item $ t\mapsto W(H^i_t)$ is nonincreasing.\label{enum_WFdec}
		\item $ H^i_t|_{C_i} $ is a graph over $ A\times\{0\} $ for all $ t\in[0,1] $\label{enum_WFgraph},
		\item The graph parametrizations $ u^i_t:A\to \R $ of $ H^i_t $ over $ A\times\{0\} $ satisfy\label{enum_WFcloseToFlatDisk}
		$$ \norm{u^i_t}_{C^2(A)} <\frac{\rho}{2},$$
		\item $ H^i_1 $ is a round sphere of radius $ R $ and center $ \left(0,0,\lambda^{-1}\cosh(\lambda) \pm \sqrt{R^2-1}\right) $.\label{enum_WFfinalSphere}
		\label{enum_WFlast}
	\end{enumerate} 
	We glue them together to a regular homotopy $ H:\S^2\times[0,1]\to\R^3 $ which for each time $ t\in[0,1] $ is defined on $ D_i $ to be the $ \delta $-gluing of $ H^i_t|_{D_i} $ with $ H^c_t $ along the graph parametrizations over $ A\times\{0\} $, for $ i=1,2 $. And on $ \S^2\setminus (D_1 \cup D_2) $ it is defined to agree with $ H^c_t $.
	The graph parametrizations $ u^{c,i}_t:A\to \R $ of $ H^c_t $ over $ A\times\{0\} $ satisfy with appropriate reparametrizations $ u^{c,i}_t = v^i_{t_0} + t\left(\hat u^i - v^i_{t_0}\right) $ and thus with \eqref{eq_blowUpCloseToCatenoid} and \eqref{eq_catCloseToFlatDisk}
	\begin{align*}
		\norm{u^{c,i}_t}_{C^2(A)} &\leq \norm{v^i_{t_0} - \hat u^i}_{C^2(A)} + \norm{\hat u^i}_{C^2(A)} + t \norm{\hat u^i - v^i_{t_0}}_{C^2(A)}\\
		&< \frac{\rho}{8} + \frac{\rho}{8} + \frac{\rho}{8} < \frac{\rho}{2}.
	\end{align*}
	Thus, with \ref{enum_WFcloseToFlatDisk} we have $ \norm{u^c_t - u^i_t}_{C^2(A,\R)} < \rho $ and the gluing excess energy of each of the two $ \delta $-gluings remains below $ \frac{\omega}{5} $ for all times. Furthermore, by \eqref{eq_straightLineToCat} we have $ W\left(H^C_t\right)< \frac{\omega}{5} $ and obtain with \ref{enum_WFdec} and \ref{enum_WFlowEnergy}
	\begin{align*}
		W(H_t)&<W(H^1_t)+W(H^C_t)+W(H^2_t) + \frac{2}{5}\omega\\
		&<W(H^1_t)+W(H^2_t)+\frac35\omega\\
		&\leq W(H^1_0)+W(H^2_0)+\frac35\omega\\
		&< W(f_{t_0})+\omega\\
		&< W(f_0).
	\end{align*}
	Now, a concatenation of $ f:\S^2\times[0,t_0]\to\R^3 $ with $ H $ is continuous since \ref{enum_WFfirst} ensures that $ H_0=f_{t_0} $. We obtain a regular homotopy that ends in $ H_1 $ whose Willmore energy remains below $ W(f_0) $. Since $ H^c_1 = \hat f $ is a catenoid of the form \eqref{eq_catenoid} but shifted by $ (0,0,\lambda^{-1}\cosh(\lambda)) $, we conclude with \ref{enum_WFfinalSphere} that $ H_1|_{S_1},H_1|_{S_2}\in\CatSph(\lambda,R,\delta) $.
	
	\underline{Step 5:} It remains to define $ H^i $, $ i=1,2 $ which satisfy \ref{enum_WFfirst} - \ref{enum_WFlast}.\\ 
	We fix $ i=1 $ and hence consider the graphs in the upper half-space of $ \R^3 $. $ H^2 $ is constructed analogously considering the graphs in the lower half-space. For the purpose of the remaining proof we shift all the immersions by $ -\left(0,0,\lambda^{-1}\cosh(\lambda)\right) $ for the catenoid $ \hat f $ to be of the form \eqref{eq_catenoid} and the sphere obtained in the very end to be of the form \eqref{eq_sphere}.
	The regular homotopy $ H^1 $ will be obtained as a concatenation of multiple steps (see also Figure~\ref{fig_catenoidSurgery})
	\[
	\setlength{\arraycolsep}{ 0.167 em}
	\begin{array}{rlrl}
		\mu     & \mapsto \mu F_0,             & \mu  & \in [1, \bar\mu], \\
		t       & \mapsto \bar\mu F_t,         & t   & \in [0, 1], \\
		\lambda & \mapsto \lambda \bar\mu F_1, & \lambda & \in [1, \bar\lambda], \\
		s       & \mapsto \bar\lambda \bar\mu F_1 + ty, & s & \in [0,1]
	\end{array}
	\]
	the details of which will be laid out in the following.
	\begin{figure}[H]
		\centering
		\begin{tikzpicture}
			\node[anchor=south] at (0,0) {\includegraphics[width=0.95\textwidth]{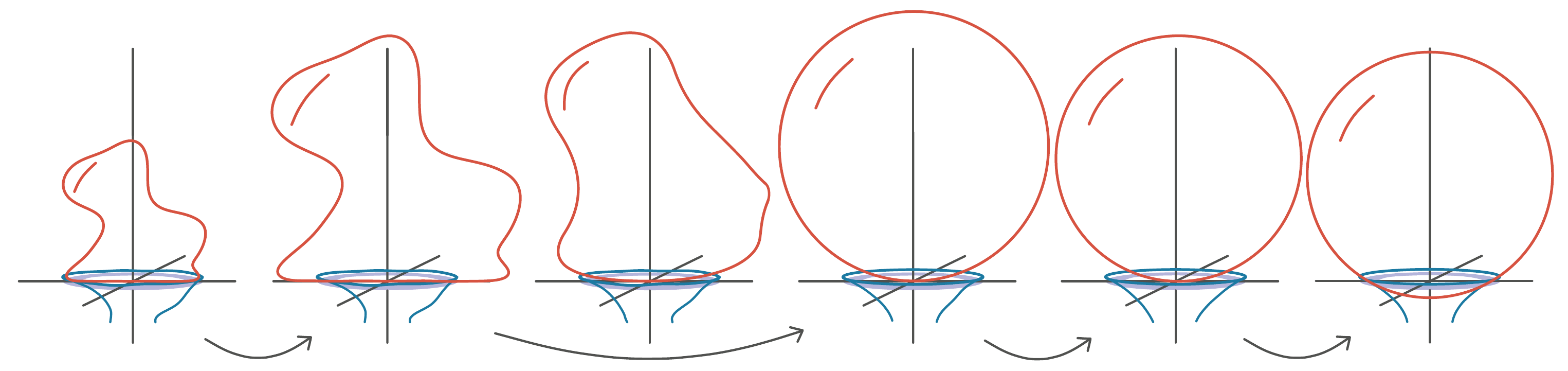}};
			
			\def\dx{0.158*\textwidth};
			
			\node[anchor=north east] at (-2.65*\dx,1.05) {\scalebox{0.4}{ \textcolor{myblue}{$A\times \{0\}$} }};
			\node[anchor=north east] at (-2.8*\dx,2) {\small \textcolor{myred}{$H^1_t$}}; 
			\node[anchor=north east] at (-2.8*\dx,0.7) {\small \textcolor{mypastel}{$H^c_t$}};
			
			\node[anchor=north] at (-2*\dx,0.2) {scaling};
			\node[anchor=north] at (-0.5*\dx,0.2) {Willmore flow};
			\node[anchor=north] at (1*\dx,0.2) {scaling};
			\node[anchor=north] at (2*\dx,0.2) {translation};
		\end{tikzpicture}
		
		\caption{Simultaneous regular homotopies to a catenoid piece and a round sphere glued to each other at each time resulting in a catenoid sphere of type $ \beta $ (gluing not displayed).}\label{fig_catenoidSurgery}
	\end{figure}
	\underline{Step 5.1:} Construct $ H^1_0 $ by gluing in a flat disk and start the Willmore flow.\\
	Let $ d_1 $ be an appropriate parametrization of the flat disk $ \overline{B_{\frac{1+\delta}{2}}(0)}\times\{0\} $ with graph parametrization $ u_d=0 $.
	Let $ H^1_0:\S^2\to \R^3 $ be the gluing of $ d_i $ with $ f_{t_0}|_{D_i} $ along the graphs over $ \Ann\left[\frac{1-\delta}{2},\frac{1+\delta}{2}\right] $. Appropriate choice of $ \lambda,\rho_1 $ can ensure that \eqref{eq_catCloseToFlatDisk} and \eqref{eq_blowUpCloseToCatenoid} equally apply to $ \Ann\left[\frac{1-\delta}{2},\frac{1+\delta}{2}\right] $ instead of $ A $. We obtain
	\begin{align}
		\norm{v^1_{t_0}}_{C^2\left(\Ann \left[\frac{1-\delta}{2},1+\delta\right]\right)} &\leq \norm{v^1_{t_0} - \hat u^1}_{C^2\left(\Ann \left[\frac{1-\delta}{2},1+\delta\right]\right)} + \norm{\hat u^1}_{C^2\left(\Ann \left[\frac{1-\delta}{2},1+\delta\right]\right)} \notag\\
		&< \frac\rho4,\label{eq_t0CloseToDisk}
	\end{align}
	By doubling the scale, this can be made a standard $ \delta $-gluing according to Lemma~\ref{lem_gluingLemma} and thus the gluing excess energy is less than $ \frac{\omega}{5} $. We obtain \ref{enum_WFfirst} and \ref{enum_WFlowEnergy}.
	Defining $ H^2_0 $ analogously, these initial surfaces $ H^1_0,H^2_0 $ have Willmore energy
	\begin{align*}
		4\pi \leq W(H^i_0)< W(f_{t_0}|_{D_i}) + \frac{\omega}{5},
	\end{align*}
	and therefore
	\begin{align*}
		W(H^1_0) &< W(f_{t_0}|_{D_1}) + \frac{\omega}{5}\\
		&\leq W(f_{t_0}) - W(f_{t_0}|_{D_2}) + \frac{\omega}{5}\\
		&< W(f_{t_0}) - \left( W(H^2_0) -\frac{\omega}{5} \right) + \frac{\omega}{5}\\
		&\leq W(f_{t_0}) -4\pi + \frac{2}{5}\omega \\
		&< 12\pi - \omega -4\pi +\frac{2}{5}\omega\\
		&<8\pi.
	\end{align*}
	Therefore, by Theorem~\ref{thm_KS8pi}, the Willmore flow $ \S^2\times[0,\infty)\to\R^3 $ with initial surface $ H^1_0 $ converges smoothly to a round sphere. By monotonically rescaling the time interval $ [0,\infty)\to[0,1) $, we obtain a smooth regular homotopy $ F:\S^2\times [0,1] \to\R^3 $ where we set $ F_1 $ to be the limit round sphere. Now, $ F $ is a regular homotopy of decreasing Willmore energy from $ F_0=H^1_0 $ to a round sphere. In the following, we modify $ F $ only by translation, rotation, scaling and reparametrization. Hence, \ref{enum_WFdec} will remain satisfied.
	
	\underline{Step 5.2:} Increase scale to ensure low gluing energy.\\
	Let $ p:=d_1^{-1}(0) $ denote the preimage of the center of the flat disk. Then, its tangent space $ T_pF_0 $ is the $ x $-$ y $-plane $ \R^2\times\{0\} $ and by translation and rotation of $ F_t $ we ensure that $ F_t(p)=0 $ and $ T_pF_t = \R^2\times\{0\} \subseteq \R^3$ for all $ t\in[0,1] $. 
	The Willmore flow might converge to a very small sphere or temporarily increase curvature locally which might violate \ref{enum_WFcloseToFlatDisk}. We counteract this by scaling. First note that for $ r\in(0,1+\delta) $ any $ u\in C^2(B_r(0)) $ with $ u(0)=0 $ and $ \nabla u (0) = 0 $ satisfies by Taylor expansion
	\begin{align}\label{eq_C2normBoundBySecDer}
		\norm{u}_{C^2(B_r(0))} \leq \left(\frac{r^2}{2} + r + 1\right)\norm{D^2u}_{C^0(B_r(0))} \leq 4 \norm{D^2u}_{C^0(B_r(0))} .
	\end{align}
	The norm of the second fundamental form $ (p,t)\mapsto \norm{A_t(p)} $ of $ F_t $ in $ p\in \S^2 $ assumes a maximum $ A_{max} $ on the compact spacetime $ \S^2\times[0,1] $. Therefore, by Lemma~\ref{lem_uniformGraphRadius}, there exists a radius $ r\in(0,1) $ such that for all $ t\in[0,1] $ the immersion $ F_t $ is locally a graph over $ B_r(0) \subseteq T_pF_t $ with graph parametrization $ u_t $ satisfying 
	\begin{align*}
		\norm{D^2u_t}_{C^0(B_r)} \leq 5 A_{max}.
	\end{align*}
	We now insert the scaling homotopy $ \mu\mapsto \mu F_0 $, $ \mu\in\left[1,\bar{\mu}\right], $ where $ \bar \mu> \frac{1+\delta}{r} $ and $ \bar\mu > \frac{40}{\rho}A_{max} $. 
	During this scaling \ref{enum_WFcloseToFlatDisk} is satisfied since \eqref{eq_t0CloseToDisk} and \eqref{eq_C2normBoundBySecDer}
	imply for the graph parametrization $ w_\mu(x) = \mu u_0\left(\frac{x}{\mu}\right) $ of $ \mu F_0 $
	\begin{align*}
		\norm{w_\mu}_{C^2(A)} &\leq \norm{w_\mu}_{C^2\left(\overline{B_{1+\delta}(0)}\right)} \\
		&\leq 4 \norm{D^2 w_\mu}_{C^0\left(\overline{B_{1+\delta}(0)}\right)} \\
		&=4 \frac1\mu \norm{D^2 u_0}_{C^0\left(\overline{B_{\frac{1+\delta}{\mu}}(0)}\right)} \\
		&\leq 4 \frac1\mu \norm{D^2 u_0}_{C^0\left(\overline{B_{1+\delta}(0)}\right)} \\
		&=4 \frac1\mu \norm{D^2 u_0}_{C^0\left(\Ann \left[\frac{1-\delta}{2},1+\delta\right]\right)} \\
		&=4 \frac1\mu \norm{D^2 v^1_{t_0}}_{C^0\left(\Ann \left[\frac{1-\delta}{2},1+\delta\right]\right)} \\
		&<4 \frac1\mu \frac\rho8 < \frac\rho2.
	\end{align*}
	Furthermore, we have ensured that after the scaling the graph parametrizations $ z_t(x) = \bar{\mu} u_t\left(\frac{x}{\bar{\mu}}\right)  $ of the scaled Willmore flow $ \bar{\mu} F_t $ over $ A\times\{0\} $ satisfy \ref{enum_WFcloseToFlatDisk}, i.e.\
	\begin{align*}
		\norm{z_t}_{C^2(A)}&\leq \norm{z_t}_{C^2\left(\overline{B_{1+\delta}(0)}\right)} \\
		&\leq 4 \norm{D^2z_t}_{C^0\left(\overline{B_{1+\delta}(0)}\right)}\\
		&=4 \frac{1}{\bar{\mu}} \norm{D^2u_t}_{C^0\left(\overline{B_{\frac{1+\delta}{\bar \mu}}(0)}\right)}\\
		&\leq 4 \frac{1}{\bar \mu} \norm{D^2u_t}_{C^0\left(B_r(0)\right)}\\
		&\leq 4 \frac{1}{\bar \mu} 5 A_{max}\\
		&< \frac\rho2.
	\end{align*} 
	\underline{Step 5.3:} Append final correcting homotopies.\\
	During this regular homotopy we reparametrize $ \bar \mu F_t $ appropriately such that $ \bar \mu F_t|_{C_i} $ is a graph over $ A\times\{0\} $ for \ref{enum_WFgraph} to hold. 
	Finally, we append a scaling regular homotopy that brings the round sphere to any given radius $ R\geq \Lambda $. It now is tangent to the $ x $-$ y $-plane and centered at $ (0,0,R) $ (type $ \beta $) or $ (0,0,-R) $ (type $ \alpha $). Then, we append a translation which shifts the center of the sphere from $ \pm(0,0,R) $ to $ \pm\left(0,0,\sqrt{R^2-1}\right) $. The translation brings the sphere into the form as described in Definition~\ref{def_CatSph}. In the case of the sphere lying entirely above the horizontal plane (type $ \beta $) it is now tangent to the catenoid $ H^c_1 $ in the unit circle $ \S^1 \times \{0\} $. By \eqref{eq_sphereCloseToFlatDisk}, these two steps do not violate \ref{enum_WFcloseToFlatDisk}. Reversing the shift of all surfaces by $ -\left(0,0,\lambda^{-1}\cosh(\lambda)\right) $ we observe that the sphere satisfies \ref{enum_WFfinalSphere}.
\end{proof}

\begin{proof}[Proof of Theorem~\ref{thm_globalSing}]
	Fix $ \Lambda,\delta_0 $ from Lemma~\ref{lem_catenoidSphereShrinking}~\ref{lem_catenoidSphereShrinking_lambdaMonotonicity} and $ \delta\in(0,\delta_0) $ sufficiently small such that $ \eps:= 4\pi - W_\beta(\Lambda,\Lambda,\delta) $ is positive (see Lemma~\ref{lem_catenoidSphereShrinking}~\ref{lem_catenoidSphereShrinking_deltaToZero}). Then, choose $ \lambda_1>\Lambda $ sufficiently large such that $ W_{\alpha/\beta}(\lambda,\lambda,\delta)< 4\pi + \eps $ for all $ \lambda>\lambda_1 $ (see Lemma~\ref{lem_catenoidSphereShrinking}~\ref{lem_catenoidSphereShrinking_lambdaToInfty}).
	Apply Proposition~\ref{prop_gluingA} for $ \delta_0 $ and $ R=\lambda>\lambda_1 $. Let $ H $ be the obtained homotopy and denote $ g_0:=H_1 $ and $ g^i_0:=H_1|_{S_i} \in\CatSph(\lambda,\lambda,\delta) $, $ i=1,2 $. If $ g^1_0,g^2_0 $ are both catenoid spheres of type $ \alpha $, then $ g_0\in J $ and we are done. We may now assume that $ g^1_0 $ is a catenoid sphere of type $ \beta $. We deform $ g^1_0 $ by a homotopy of catenoid spheres $ g^1_t := \lambda_tf_{\lambda_t} $ with $ f_{\lambda_t}\in \CatSph_\beta(\lambda_t,\lambda_t,\delta) $ and
	$$\lambda_t = (1-t)\lambda + t\Lambda,\quad t\in[0,1]. $$
	By Lemma~\ref{lem_catenoidSphereShrinking} and $ \lambda>\Lambda $ and $\delta<\delta_0 $ this is a regular homotopy of decreasing Willmore energy to an immersion $ g^1_1\in\CatSph_\beta(\Lambda,\Lambda,\delta) $ of Willmore energy $$ W(g^1_1) = W_\beta(\Lambda,\Lambda,\delta)=4\pi - \eps. $$ This regular homotopy leaves the center piece of the catenoid fixed at all times and thus naturally extends to a regular homotopy $ g_t $ which is equal to $ g^1_t $ on $ S_1 $ and $ g^2_0 $ on $ S_2 $. We conclude 
	$$ W(g_1) = W(g^1_1)+W(g^2_0) < 4\pi - \eps + 4\pi + \eps = 8\pi.  $$ 
	Therefore, by Theorem~\ref{thm_KS8pi}, the Willmore flow with initial surface $ g_1 $ yields the last part of the desired homotopy to a round sphere.
\end{proof}
\begin{figure}[H]
	\centering
	\newcommand{\drawCatSphSingle}[1]{
		\begin{tikzpicture}[scale=0.65]
			\begin{axis}[
				scale=1,
				axis equal,
				axis lines=middle,
				xtick={-1,1},    
				ytick={0},    
				xmin=0.5,
				xmax=-0.5,
				ymin=-1.2,
				ymax=16.5,
				samples=500,
				legend pos=north west,
				xticklabel style={yshift=-5pt}
				]
				\pgfmathsetmacro{\lam}{#1}
				\pgfmathsetmacro{\R}{#1}
				\pgfmathsetmacro{\tzero}{ln(\lam + sqrt(\lam^2 - 1))/\lam}
				\pgfmathsetmacro{\theta}{rad(asin(1/\R))}
				
				\addplot[myred, thick, domain=0:\tzero,parametric] ({cosh(x*\lam)/\lam},{x-\tzero});
				\addplot[myred, thick, domain=0:\tzero,parametric] ({-cosh(x*\lam)/\lam},{x-\tzero});
				
				\addplot[myblue, thick, domain=\theta:2*pi-\theta,parametric] ({\R*sin(deg(x))},{\R*(cos(deg(\theta))-cos(deg(x)))});
				
			\end{axis}
		\end{tikzpicture}
	}
	\resizebox{0.9\textwidth}{!}{
	\begin{tabular}{c c c}
		\drawCatSphSingle{8}&
		\drawCatSphSingle{5}&
		\drawCatSphSingle{2}
	\end{tabular}
	}
	\caption{The homotopy $ f_{\lambda_t} $ from the proof of Theorem~\ref{thm_globalSing} for $ \lambda_t = 8,5,2 $.}\label{fig_catenoidSpheresShrinking}
\end{figure}

We conclude the section with some observations about the family $ J $ defined in Definition~\ref{def_J}.
\begin{lem}\label{lem_Jlem}
	Let $ \omega\in(8\pi,12\pi] $. There exists $ j\in J $ with $ W(j)<\omega $ such that for all $ f\in I(\omega)\, \cap\, J $, there exists a regular homotopy in $ I(\omega) $ from $ f $ to $ j $ or $ -j $.
\end{lem}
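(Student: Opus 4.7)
The plan is to apply Theorem~\ref{thm_globalSing} with a common choice of gluing parameters so that every $f \in I(\omega) \cap J$ is first deformed, inside $I(\omega)$, to a rotationally symmetric barbell in $J$ of a specific shape, and then to connect all these barbell endpoints to one canonical reference $j$ (or $-j$) by rigid motions of $\R^3$ and reparametrizations of $\S^2$, all of which preserve Willmore energy.

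To choose the reference $j$, I would use Lemma~\ref{lem_catenoidSphereShrinking}~\ref{lem_catenoidSphereShrinking_lambdaToInfty} to fix admissible parameters $\lambda_0 = R_0 \geq \Lambda$ and a small $\delta>0$ with $2\,W_\alpha(\lambda_0,\lambda_0,\delta) < \omega$, which is possible since $\omega > 8\pi$ and $W_\alpha(\lambda,\lambda,\delta) \to 4\pi$ as $\lambda\to\infty$. Gluing two type-$\alpha$ catenoid spheres with these parameters back-to-back along their shared catenoid neck, canonically placed with symmetry axis along the $z$-axis, produces a smooth rotationally symmetric immersion $j$. Its generating profile is exactly of the kind displayed in Figure~\ref{fig_sketchJ}, so $j\in J$, and $W(j) = 2\,W_\alpha(\lambda_0,\lambda_0,\delta) < \omega$.

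For $f \in I(\omega) \cap J$, I would apply Theorem~\ref{thm_globalSing}, running the underlying Proposition~\ref{prop_gluingA} with the same parameters $(\lambda_0,\lambda_0,\delta)$. This yields a regular homotopy $H^f$ with $H^f_0 = f$ and $W(H^f_t) < W(f) \leq \omega$ for all $t \in (0,1]$, so $H^f$ stays in $I(\omega)$. The endpoint $H^f_1$ is either a round sphere or some $j_f \in J$. The round-sphere alternative is ruled out by Corollary~\ref{cor_triplePoints}: since $f \in J$ and a round sphere is embedded, such an $H^f$ would be a regular homotopy from $J$ to an embedded sphere with $W(H^f_t) \leq \omega \leq 12\pi$ everywhere, contradicting the strict lower bound $W(H^f_t) > 12\pi$ required by the corollary. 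Hence the proof of Theorem~\ref{thm_globalSing} must proceed through the case in which both catenoid spheres produced by Proposition~\ref{prop_gluingA} are of type $\alpha$, and $j_f = H^f_1$ is the corresponding barbell with parameters $(\lambda_0,\lambda_0,\delta)$, parametrizing the same image as $j$ up to an orientation-preserving rigid motion of $\R^3$.

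Finally, I would append to $H^f$ a regular homotopy given by a path in the group $SE(3)$ of orientation-preserving rigid motions of $\R^3$ (which is path-connected) followed by an orientation-preserving reparametrization of $\S^2$ (diffeomorphisms of $\S^2$ preserving orientation form a path-connected space by Smale's theorem \cite{Smale_DiffS2}). This transports $j_f$ to either $j$ or $-j$, depending on the orientation inherited from the construction. Since rigid motions and reparametrizations preserve Willmore energy, the extended homotopy remains in $I(\omega)$. The main technical point to verify carefully is that, for fixed parameters, the $J$-ending endpoint produced by Theorem~\ref{thm_globalSing} is canonical up to rigid motion and reparametrization; the orientation ambiguity is precisely why the lemma must allow both $j$ and $-j$ as targets.
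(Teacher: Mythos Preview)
Your overall architecture is right, but there is a genuine gap in the uniformity of parameters. Proposition~\ref{prop_gluingA} produces, for a given $f_0$ and $\delta$, a threshold $\Lambda=\Lambda(f_0,\delta)$ such that the conclusion holds for $\lambda,R\geq\Lambda$. You fix $(\lambda_0,\lambda_0,\delta)$ \emph{before} seeing $f$ and then try to run Proposition~\ref{prop_gluingA} for every $f\in I(\omega)\cap J$ with these same parameters. Nothing guarantees $\lambda_0\geq\Lambda(f,\delta)$ for all $f$; in fact, inspection of the proof of Proposition~\ref{prop_gluingA} shows $\Lambda$ depends on the energy drop $\omega_f=W(f_0)-W(f_{s_0})$ along the flow, which certainly varies with $f$. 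So your claim that the endpoint $j_f$ coincides with $j$ up to rigid motion is unjustified.

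The paper fixes this with one extra move. It first fixes $\delta$ and a threshold $\lambda_1$ depending only on $\omega$ such that $W_\alpha(\lambda,\lambda,\delta)<\omega/2$ for all $\lambda\geq\lambda_1$, and defines the reference $j$ at scale $\lambda_1$. For each $f$ it then applies Proposition~\ref{prop_gluingA} with some $\lambda(f)>\max(\lambda_1,\Lambda(f,\delta))$, landing on a double type-$\alpha$ catenoid sphere at scale $\lambda(f)$ (type $\beta$ is excluded by Theorem~\ref{thm_triplePoints}). The crucial additional step is the explicit regular homotopy through the family $\CatSph_\alpha(\lambda,\lambda,\delta)$ as $\lambda$ decreases from $\lambda(f)$ to $\lambda_1$; since $2W_\alpha(\lambda,\lambda,\delta)<\omega$ throughout, this stays in $I(\omega)$ and brings every $f$ to the common scale $\lambda_1$, after which only orientation distinguishes the endpoint from $j$ or $-j$. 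Inserting this scale-reduction homotopy would close the gap in your argument.
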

\begin{proof}
	Fix $ \Lambda>0 $ from Lemma~\ref{lem_catenoidSphereShrinking}~\ref{lem_catenoidSphereShrinking_lambdaToInfty}. Then, we choose $ \delta\in\left(0,\frac12\right) $ and $ \lambda_1 \geq \Lambda $ such that 
	\begin{equation*}
		W(\lambda,\lambda,\delta)<\frac\omega2 \label{eq_WllSmall}
	\end{equation*}
	for all $ \lambda \geq \lambda_1 $. Let $ j $ be a smoothly immersed sphere such that on each hemisphere $ S_i $ it is a catenoid sphere $ j|_{S_i}\in \CatSph_\alpha(\lambda_1,\lambda_1,\delta) $ with outer normal on the spherical parts. It has Willmore energy
	\begin{align*}
		W(j) = 2W_\alpha(\lambda_1,\lambda_1,\delta)<\omega.
	\end{align*}
	Now, for given $ f\in I(\omega)\, \cap\, J  $ we apply Proposition~\ref{prop_gluingA} for $ \delta $ and $ \lambda=R>\lambda_1 $ to obtain a regular homotopy to some $ i\in\Imm(\S^2,\R^3) $ with hemispheres $ i|_{S_i}\in \CatSph_\alpha(\lambda,\lambda,\delta) $. Type $ \beta $ cannot occur by Theorem~\ref{thm_triplePoints}. We concatenate the regular homotopy that deforms each hemisphere by $ t\mapsto i_t|_{S_i}\in \CatSph(\lambda -t,\lambda -t,\delta)  $, $ t\in[0,\lambda-\lambda_1] $. By \eqref{eq_WllSmall} we have $ W(i_t)<\omega $ for all $ t\in[0,1] $. The obtained surface either has outer or inner normal on the spherical parts and thus can be joined to $ j $ or $ -j $ by reparametrizing homotopies.
\end{proof}
\begin{rem}\label{rem_JW>8pi}
	There exists a sequence of immersions $ j_n\in J $ with $ W(j_n)\to 8\pi $. This follows from Lemma~\ref{lem_Jlem} but also already from \cite[Section~5]{Blatt}. For $ j\in J $, we have $ W(j)\geq 8\pi $ by the Li--Yau inequality. We even have $ W(j)>8\pi $, otherwise the Willmore flow would yield some $ j_t\in J $ with $ W(j_t)<8\pi $ since $ j $ is not a critical point.
	We conclude that the minimal Willmore energy $ \inf_{j\in J}W(j) = 8\pi $ is not assumed within the family $ J $.
\end{rem}
\subsection{Partial Global Classification of Singularities below $ 16\pi $}\label{sec_globalSing16pi}
If we replace the energy bound of $ 12\pi $ by $ 16\pi $, we also have to consider the possibility of catenoid, trinoid, and Enneper singularities \cite{LN}. 
Here, \textit{trinoid} refers to any complete genus zero minimal surfaces of total curvature $ -8\pi $ with three ends, as classified in \cite[Theorem~3]{Lopez_CompleteMSTotCurv}\com{In (46) second formula it should be $ [\dots] + r_1^2 =0 $ and not $ [\dots] - r_1^2 =0 $}. These have either two catenoid ends and one planar end or three catenoid ends which can have varying orientations, neck sizes and axes (with some restrictions cf.\ \cite{K_Bubbles,Perez_RelMS}).

The approach of the previous section capitalizes on the fact that the minimal surface that models the singularity---the catenoid--- has more than one end. This allowed us to divide the remaining surface around the blow-up region into parts of smaller Willmore energy. This can be applied to trinoids as well.
Enneper's surface, however, only has one end and would require an entirely different method and is therefore excluded in the following partial classification.

The construction of catenoid spheres in Section~\ref{sec_gluingCatSph} can be generalized to include not only catenoids but also asymptotic catenoid ends and planar ends. Hence, planar ends can be connected to a round sphere and catenoid ends can have a connection to a round sphere of type $ \alpha $ or $ \beta $. 
This leads to the following additional cases.
\begin{prop}\label{prop_globalSing16pi}
	Let $ f\in \S^2 \to \R^3 $ be a smooth immersion with Willmore energy less than $ 16\pi $. If $ f $ does not develop a singularity of Enneper-type (see Theorem~\ref{thm_LN}), then there exists a regular homotopy $ H:\S^2\times [0,1]\to\R^3 $ with $ H_0=f $ and $ W(H_t)<W(f) $ for all $ t\in(0,1] $, where $ H_1 $ realizes one of the following models (see also Figure~\ref{fig_globalSing16pi}).
	\begin{enumerate}[label=(\alph*)]
		\item A round sphere.\label{enum_16pi_sphere}
		\item \label{enum_16pi_J} A catenoid whose ends are $ \alpha $-connected to round spheres (see Proposition~\ref{prop_gluingA}).
		
		\item \label{enum_16pi_DoubleJ} Two catenoids whose four ends are each $ \alpha $-connected to round spheres.
		
		\item \label{enum_16pi_trinoid} A trinoid whose ends are connected to round spheres and the connections at the catenoid ends are of type $ \alpha $.
		
	\end{enumerate}
\end{prop}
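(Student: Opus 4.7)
The plan is to iterate the surgery from Proposition~\ref{prop_gluingA}, extending it to accommodate trinoid necks in addition to catenoid necks. Run the Willmore flow from $f$; if it converges smoothly to a Willmore sphere, we are in case~\ref{enum_16pi_sphere}. Otherwise, Theorem~\ref{thm_LN} together with the Enneper-exclusion hypothesis identifies the blow-up at the singular time as either a catenoid or a trinoid.

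In the catenoid case, I would repeat the construction of Proposition~\ref{prop_gluingA} to produce a regular homotopy of strictly decreasing Willmore energy to a configuration consisting of a small catenoid neck $\hat f$ glued via Lemma~\ref{lem_gluingLemma} to two sphere-cap pieces. On each cap, restart the Willmore flow: if that cap has energy below $8\pi$ then Theorem~\ref{thm_KS8pi} yields convergence to a round sphere; otherwise the cap itself develops a catenoid or trinoid singularity and one recurses. In the trinoid case, I would generalize Section~\ref{sec_gluingCatSph} to a ``trinoid-sphere'' construction, in which for each of the three ends of the trinoid (planar or catenoidal, as classified in~\cite{Lopez_CompleteMSTotCurv}) one glues either a flat disk or a catenoid-to-sphere cap of the appropriate $\alpha$- or $\beta$-type, and replays the gluing estimate of Lemma~\ref{lem_gluingLemma} end by end. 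The surgery region is the pair of pants around the blow-up, and the three resulting disk components each receive a restarted Willmore flow exactly as in the catenoid case.

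Termination and enumeration of the final configuration are governed by the budget $W(f) < 16\pi$. By Lemma~\ref{lem_catenoidSphereShrinking}~\ref{lem_catenoidSphereShrinking_lambdaToInfty} and its trinoid analogue, each sphere cap contributes Willmore energy arbitrarily close to $4\pi$ in the scaling limit, while catenoids and trinoids themselves are minimal and thus contribute zero; combined with the gluing-excess bound of Lemma~\ref{lem_gluingLemma}, the final Willmore energy is approximately $4\pi$ times the number of sphere caps, so at most three or four caps can appear. The possibilities are: one catenoid with two caps (case~\ref{enum_16pi_J}), two catenoids with three caps arranged in a chain so that the middle cap absorbs two end connections (case~\ref{enum_16pi_DoubleJ}), or one trinoid with three caps (case~\ref{enum_16pi_trinoid}). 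Type-$\beta$ connections are eliminated exactly as in the proof of Theorem~\ref{thm_globalSing}: the monotonicity of Lemma~\ref{lem_catenoidSphereShrinking}~\ref{lem_catenoidSphereShrinking_lambdaMonotonicity} lets us shrink any $\beta$-cap until the energy of the corresponding piece drops below $8\pi$, at which point Theorem~\ref{thm_KS8pi} resolves it into a round sphere.

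The main obstacle will be the trinoid surgery itself: one must develop the trinoid analogue of the family $\CatSph(\lambda,R,\delta)$ and re-establish the monotonicity and scaling limits of Lemma~\ref{lem_catenoidSphereShrinking} end by end, which requires invoking the geometric classification of~\cite{Lopez_CompleteMSTotCurv,K_Bubbles,Perez_RelMS} to control neck sizes, axes, and orientations uniformly in the gluing parameters. A secondary subtlety is that the Enneper-exclusion hypothesis must be inherited by the restarted flows on every generation of surgery; this should be stated as a standing hypothesis, or one should argue that restarted flows of strictly lower Willmore energy cannot develop Enneper blow-ups, which would require a lower-energy-bound statement specific to Enneper singularities that is not yet available in the literature.
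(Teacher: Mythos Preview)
Your approach is essentially the same as the paper's, but you overcomplicate the recursion and flag as obstacles two points that the paper dispatches by simple energy bookkeeping.

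First, the Enneper-inheritance concern is a non-issue. After a catenoid surgery on a surface with $W(f)<16\pi$, the two capped disks satisfy $W(H^1_0)+W(H^2_0)<16\pi$ (up to arbitrarily small gluing excess) and each is at least $4\pi$, so the larger one has energy strictly below $12\pi$. By Theorem~\ref{thm_LN}, a flow starting below $12\pi$ can only develop catenoid singularities---no trinoid, no Enneper. After a trinoid surgery, the three capped disks sum to less than $16\pi$ and each is at least $4\pi$, so every one of them lies below $8\pi$ and Theorem~\ref{thm_KS8pi} applies directly with no further singularity at all. The hypothesis is therefore never invoked on a restarted flow.

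Second, this same arithmetic bounds the recursion depth at one. In the trinoid case there is no recursion: all three caps flow directly to round spheres, yielding case~\ref{enum_16pi_trinoid} (or, after eliminating a $\beta$-connection, the energy drops below $12\pi$ and Proposition~\ref{prop_gluingA} gives~\ref{enum_16pi_sphere} or~\ref{enum_16pi_J}). In the catenoid case, one cap is below $8\pi$ (direct convergence) and the other is below $12\pi$, so instead of ``recursing'' you simply invoke the already-established Proposition~\ref{prop_gluingA} and Theorem~\ref{thm_globalSing}, which hand you a round sphere or a single-catenoid $J$-surface. Gluing this back to the first catenoid gives cases~\ref{enum_16pi_sphere}, \ref{enum_16pi_J}, or~\ref{enum_16pi_DoubleJ}, after eliminating $\beta$-connections exactly as you describe.

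Finally, you do not need a full ``trinoid-sphere'' family with new monotonicity lemmas. Lemma~\ref{lem_catenoidSphereShrinking} uses only the catenoidal asymptotics of the end, so it applies verbatim at each catenoid end of a trinoid; planar ends are even easier. The paper makes exactly this observation rather than redeveloping Section~\ref{sec_gluingCatSph}.
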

\begin{figure}[H]
	\centering
	\subfigure[]{\includegraphics[width=0.2\textwidth]{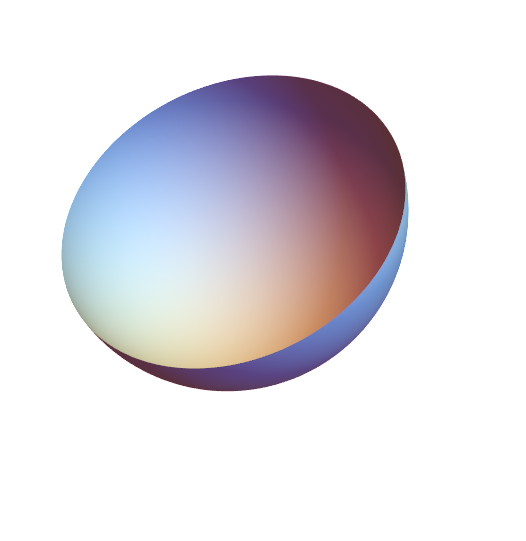}}
	\hspace*{-0.75cm}
	\subfigure[]{\includegraphics[width=0.2\textwidth]{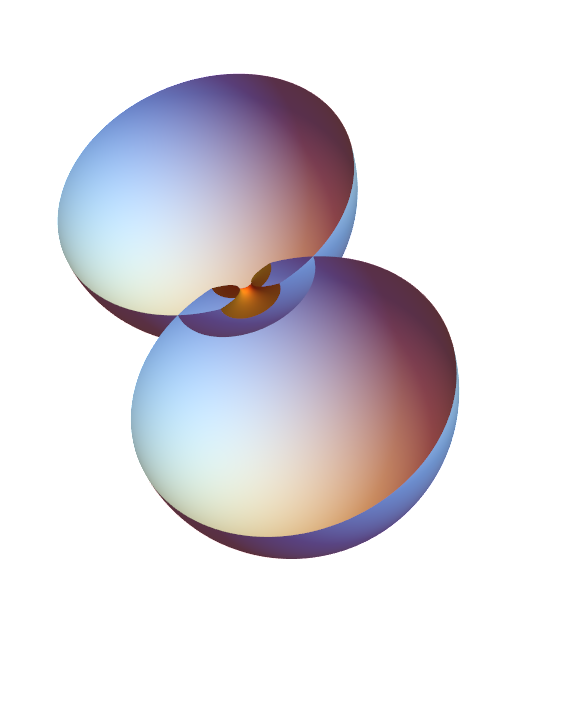}}
	\hspace*{-0.75cm}
	\subfigure[]{\includegraphics[width=0.2\textwidth]{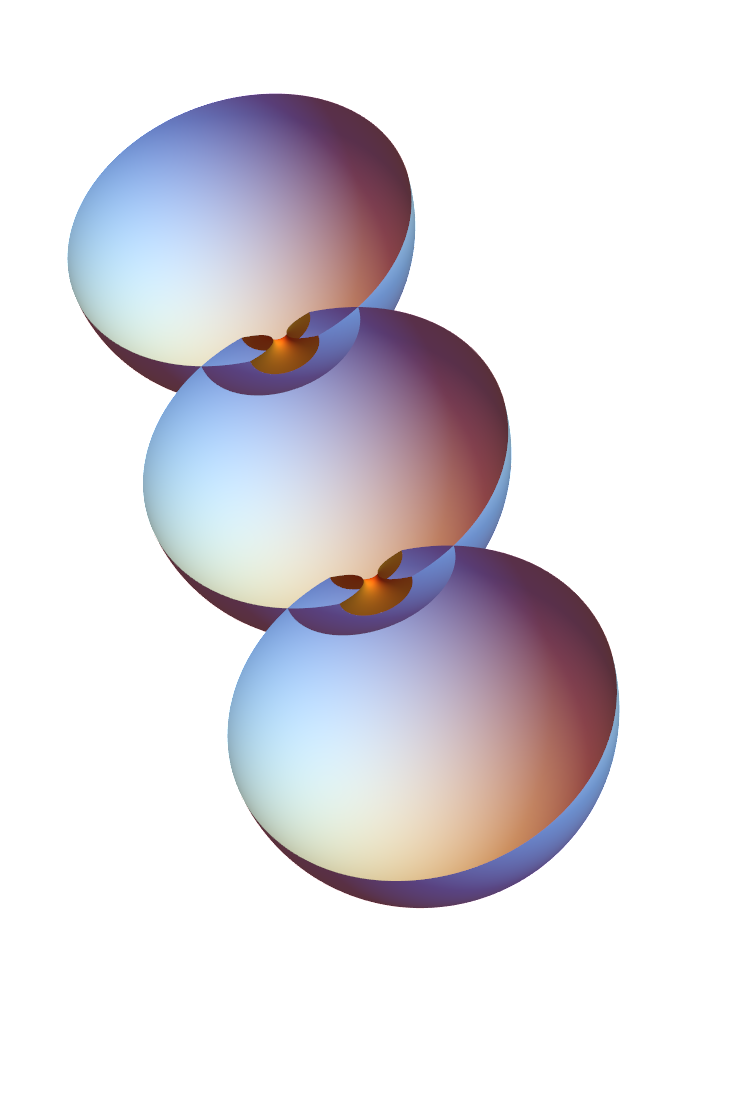}}
	\hspace*{-0.75cm}
	\subfigure[]{
		\raisebox{0cm}{\includegraphics[width=0.25\textwidth]{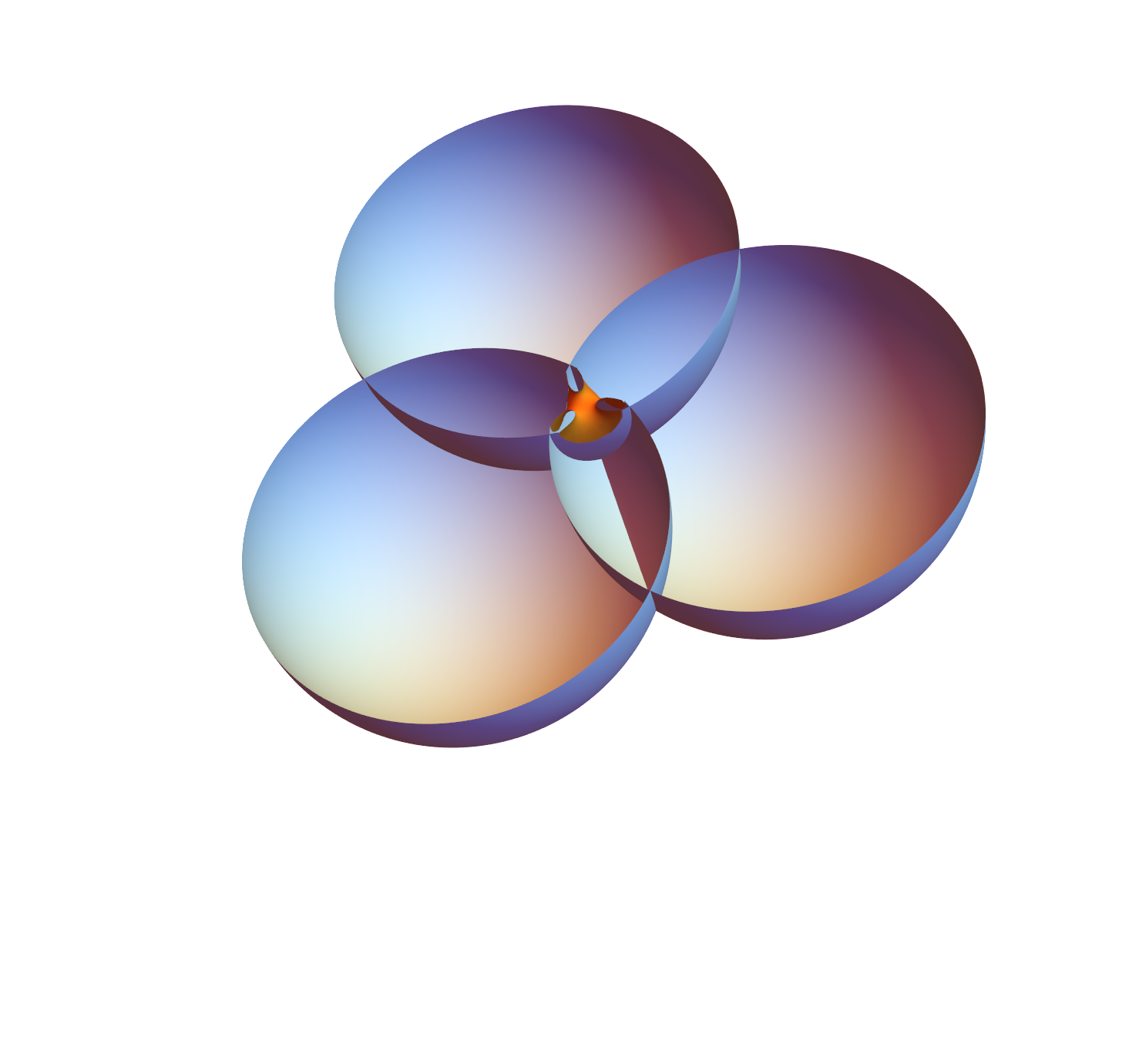}}
		\hspace*{-1.7cm}
		\raisebox{1.5cm}{\includegraphics[width=0.23\textwidth]{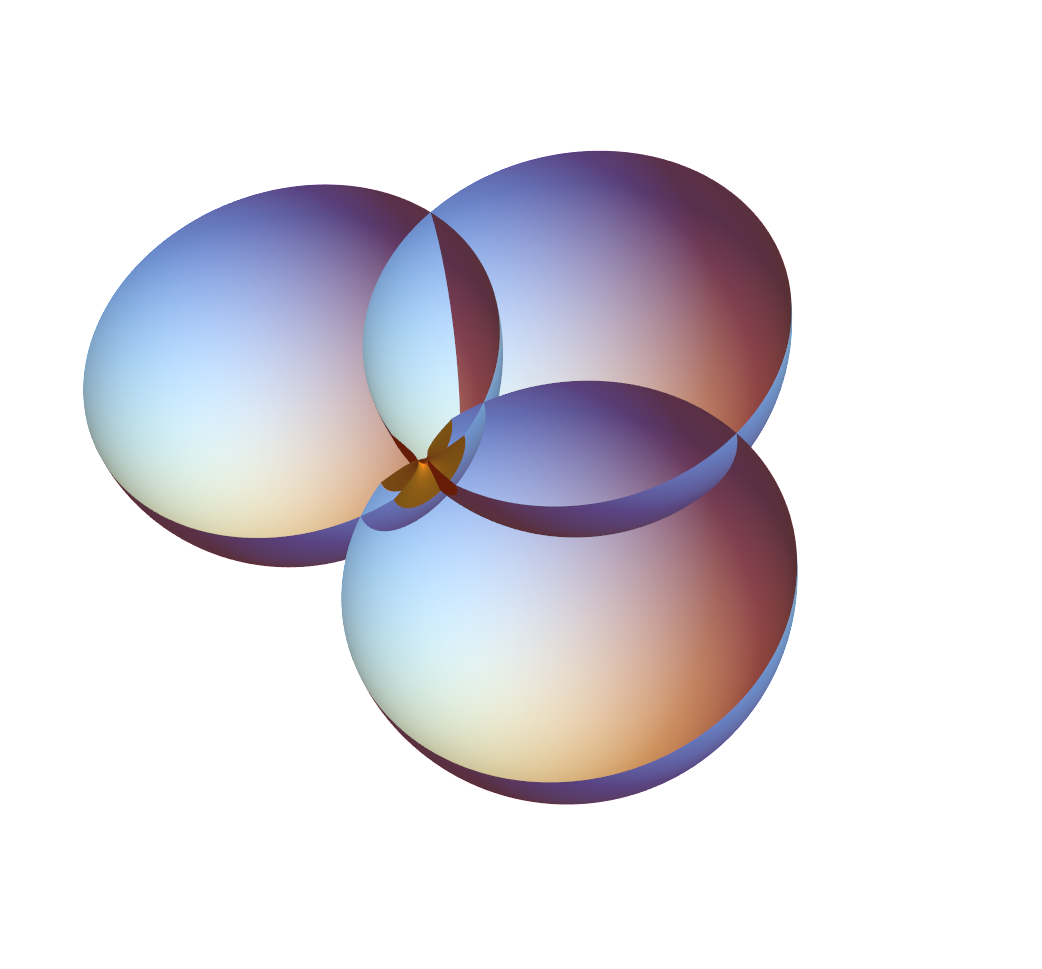}}
}
	\caption{The singularity models of Proposition~\ref{prop_globalSing16pi}. Their images in the lower half-space are displayed and a reflection at the horizontal plane gives their full images.}\label{fig_globalSing16pi}
\end{figure}
\begin{proof}
	We repeat many arguments of the proof of Proposition~\ref{prop_gluingA} and Theorem~\ref{thm_globalSing} without going into detail. 
	If the Willmore flow with initial surface $ f $ does not converge to a round sphere, it develops a singularity whose blow-up is either a catenoid, a trinoid, or Enneper's surface (see Theorem~\ref{thm_LN}). The latter was ruled out by assumption.\\
	Trinoid case:\\
	Analogously to the catenoid case in Proposition~\ref{prop_gluingA}, each of the three disks that close up the trinoid have Willmore energy less than $ 8\pi $ and with the same gluing procedure we arrive at a trinoid whose ends are $ \alpha $-connected or $ \beta $-connected to round spheres. Note that Lemma~\ref{lem_catenoidSphereShrinking} uses the asymptotics of the catenoid and thus equally applies to catenoid ends of the trinoid. So, if one of connections at a catenoid end is of type $ \beta $, we employ Lemma~\ref{lem_catenoidSphereShrinking} as in the proof of Theorem~\ref{thm_globalSing} to arrive at an immersion of Willmore energy less than $ 12\pi $ from which we apply Proposition~\ref{prop_gluingA} to obtain the case \ref{enum_16pi_sphere} or \ref{enum_16pi_J}. But if all are of type $ \alpha $, we arrive at \ref{enum_16pi_trinoid}.\\
	Catenoid case:\\
	The disks that close up a catenoid singularity now each have Willmore energy less than $ 12\pi $, but only one of them can have Willmore energy larger than $ 8\pi $. On this part we proceed as in the proof of Proposition~\ref{prop_gluingA}, but instead of using the Willmore flow to a round sphere to construct the homotopy $ H^1 $, we use the regular homotopy obtained from Proposition~\ref{prop_gluingA} to one of the cases \ref{enum_16pi_sphere} or \ref{enum_16pi_J}. In the former case, the remaining proof carries through to yield \ref{enum_16pi_sphere} or \ref{enum_16pi_J} for the whole surface. Whereas in the latter case we obtain a catenoid piece which is attached to a sphere and to a surface of type \ref{enum_16pi_J}. Both connections can be of either type $ \alpha $ or type $ \beta $. If one of them is of type $ \beta $, we employ Lemma~\ref{lem_catenoidSphereShrinking} to get below an energy of $ 12\pi $ and apply Proposition~\ref{prop_gluingA} again. But if both connections are of type $ \alpha $, we arrived at a surface described in \ref{enum_16pi_DoubleJ}. 
\end{proof}

We outline a potential notion of ``Willmore flow with surgery'' for spheres with energy at most $ 16\pi $.
\begin{rem}\label{rem_surgery}
	Whenever the Willmore flow wit initial energy at most $ 12\pi $ does not converge, we replace the catenoid by two flat disks. The convergence of the two resulting Willmore flows of spheres is automatically obtained by Theorem~\ref{thm_KS8pi} \cite{KS_Remov} for initial energies at most $ 8\pi $ (see the proof of Proposition~\ref{prop_gluingA}). Hence, with this ``surgery'' we obtain convergence for all initial surfaces with energy at most $ 12\pi $ to either \textit{one} or \textit{two} round spheres. However, it is not clear whether this is a ``canonical'' surgery or if there exist other ways of flowing ``through'' a catenoid singularity with just one sphere. Compare this to the setting of tori of revolution, where the authors of \cite{DMRS_WillmoreFlowToriFixedConformal} propose a way of flowing ``through'' singularities of certain Willmore flows with a change of topology from torus to sphere.\com{If the initial surface has turning number zero and mirror symmetry and the flow is immortal and has bounded area, then they show that it converges to an inverted catenoid. They provide a spherical Willmore flow that admits the (nonsmooth) inverted catenoid as initial surface}
	The discussion aboe for Willmore energy at most $ 16\pi $ yields the following.
	
	\textit{For smooth immersions $ f_0:\S^2\to\R^3 $ with $ W(f_0)\leq 16\pi $, a notion of ``Willmore flow with surgery'' with initial surface $ f_0 $ either has an Enneper-type singularity or converges to one, two or three round spheres.}
\end{rem}
\appendix
\section{Appendix}\label{sec_appendixGlue}
\corTriplePoints*
\begin{proof}
	By the Li--Yau inequality, it follows immediately that there exists a time $ t\in[0,1] $ with $ W(H_t)\geq 12\pi $. But actually a strict inequality holds, since \cite{S_TriplePoints} implies that there must be an immersion $ H_t $ with a generic triple point, in particular, all immersions in a $ C^\infty $-neighborhood of that immersion have triple points\com{Otherwise a small perturbation would yield a regular homotopy without triple points}. Thus, any Willmore flow started in $ H_t $ must still have energy at least $ 12\pi $ after some time $ \eps>0 $. And since there are no critical points at $ 12\pi $ it follows $ W(H_t)>12\pi $.
\end{proof}

\propLiYauTurningNumber*
\begin{proof}
	First, define for $ r_0>0 $, $ h_0\in\R $ the quarter circles 
	\begin{align*}
		\gamma^I_{r_0,h_0}&:\left[-\frac\pi2,0\right]\to[0,\infty)\times \R,\qquad &t\mapsto  \left(r_0\cos t, r_0\sin t + h_0\right),\\
		\gamma^{II}_{r_0,h_0}&:\left[0,\frac\pi2\right]\to[0,\infty)\times \R,\qquad &t\mapsto  \left(r_0\cos t, r_0\sin t + h_0\right).
	\end{align*}
	Without loss of generality we may assume $ \tau>0 $. Let $ \vartheta:[0,1]\to\R $ be the lift of the tangent angle of $ c $ with $ \vartheta(0)=0 $ and $$ \vartheta(1)=2\pi\tau = 2\pi k + \pi $$ for some $ k\in \N $. Let $ l\in\{0,1,\dots,k\} $ and denote $ c=(r,h) $. The mean value theorem lets us pick $ t_l $ to be the smallest number in $ \vartheta^{-1}(\{2\pi l + \frac\pi2 \}) $. The tangent at $ t_l $ is $ (0,1) $. In the following we construct $ C^1 $-curves which are composed by smooth curves connected to each other at points where their tangent is $ (0,1) $. For $ l\in\{0,1,\dots,k-1\} $ consider the $ C^1 $-curve $ \gamma_l $ obtained as the concatenation of 
	\begin{align*}
		\gamma^I_{r(t_l),h(t_l)},\quad c|_{[t_l,t_{l+1}]},\quad \gamma^{II}_{r(t_{l+1}),h(t_{l+1})}
	\end{align*}
	Now, the turning number of $ \gamma_l $ is $ \frac32 $. In an abuse of notation we identify each curve segment $ \rho $ with the surface obtained by rotating the curve and write $ W(\rho) $ for its Willmore energy. Furthermore, we can approximate piecewise smooth curves by smooth curves. They can be chosen to have Willmore energy arbitrary close to the sum of the energies of each smooth piece. We claim
	\begin{align}\label{eq_turningNumber8pi}
		W(\gamma_l)\geq 8\pi.
	\end{align}
	This can be deduced in two ways.
	First, this is due to the fact that the closed plane curve obtained from mirroring $ \gamma $ has turning number $ 3 $ and thus by Hopf's Umlaufsatz (or Theorem of turning tangents \cite{DC}) has self-intersections. Therefore, the immersion $ f $ is not embedded and by the Li--Yau inequality the claim follows. Second, the surface of revolution and its turning number is preserved under the Willmore flow \cite{Blatt}, hence the Willmore flow with initial surface $ f $ cannot converge to a round sphere and by Theorem~\ref{thm_KS8pi} the claim follows. The two quarter circles produce two round hemisphere and each contributes $ 2\pi $ energy to \eqref{eq_turningNumber8pi}. Hence, we obtain
	$$ W(c|_{[t_l,t_{l+1}]})\geq 4\pi. $$
	Furthermore, we construct two $ C^1 $-spheres of revolution by concatenating
	\begin{align*}
		c|_{[0,t_0]}, \quad \gamma^{II}_{r(t_0),h(t_0)},
	\end{align*}
	and
	\begin{align*}
		\gamma^I_{r(t_k),h(t_k)},\quad c|_{[t_k,1]}.
	\end{align*}
	Both have Willmore energy at least $ 4\pi $ where $ 2\pi $ is contributed by the round hemispheres. Therefore
	\begin{align*}
		W(c|_{[0,t_{0}]}),W(c|_{[t_k,1]}) \geq 2\pi,
	\end{align*}
	and we obtain in total (see Figure~\ref{fig_LiYauTurningNumber} for an example)
	\begin{align*}
		W(f)&\geq W(c|_{[0,t_{0}]}) + \sum_{l=0}^{k-1}W(c|_{[t_l,t_{l+1}]}) + W(c|_{[t_k,1]})\\
		&\geq2\pi + 4\pi k + 2\pi\\
		&= 4\pi \left(\tau +\frac12\right).
	\end{align*}
	\begin{figure}[H]
		\centering
		\begin{tikzpicture}
			\def\d{\textwidth*0.05}
			\node[anchor=south] at (0,0) {\includegraphics[width=0.75\textwidth]{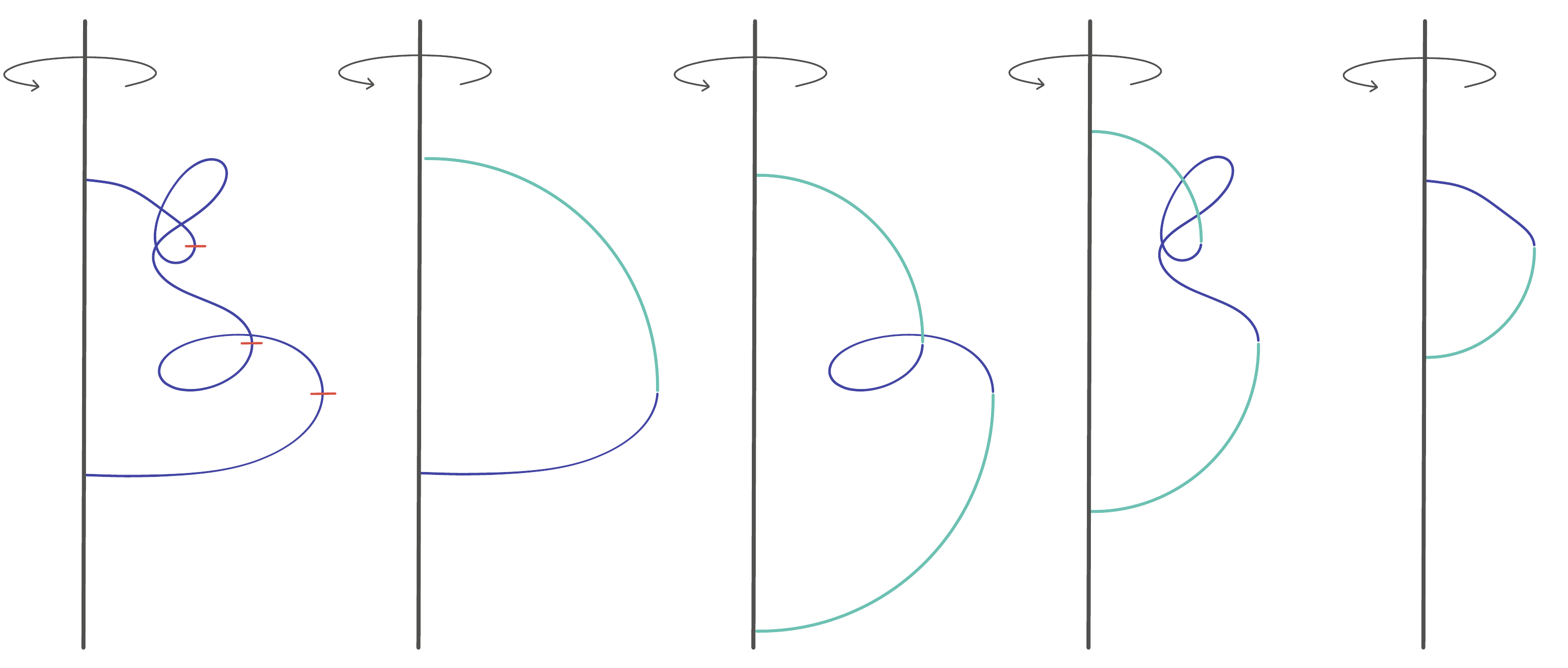}};
			\node[anchor=center] at (-6\d,2.25\d) {$ c $};
			\node[anchor=center] at (0.8\d,1.2\d) {$ \gamma_0 $};
			\node[anchor=center] at (4\d,2.55\d) {$ \gamma_1 $};
			\def\x{0.039\textwidth}
			\node[anchor=north] at (0.73,0) {$ W(f) $ \hspace{\x} $ \geq $ \hspace{\x} $ 2\pi $ \hspace{\x} $ + $ \hspace{\x} $ 4\pi $ \hspace{\x} $ + $ \hspace{\x} $ 4\pi $ \hspace{\x} $ + $ \hspace{\x} $ 2\pi $};
		\end{tikzpicture}
		\caption{An example for the proof of Proposition~\ref{prop_LiYauExtensionRotSymm}.}\label{fig_LiYauTurningNumber}
	\end{figure}
	It remains to show the strict inequality. Assume it is an equality, then $ f $ is a critical point of the Willmore energy. Otherwise, the Willmore flow would yield a surface contradicting the inequality since it preserves surfaces of revolution and their turning number. By assumption, $ f $ is not a round sphere. By Bryant's classification \cite{Bryant_Dual}, with $ n:=\left(\tau + \frac12\right)\geq 2 $, the immersion $ f $ has an $ n $-tuple point at which it can be inverted to obtain a complete minimal surface with $ n $ embedded planar ends. But, as a surface of revolution, $ f $ has a full curve of $ n $-tuple points and the minimal surface also has $ n $-tuple points as well as density $ n $ at infinity. This is a contradiction to \cite[Theorem~A]{Kusner_Conformal}.
\end{proof}

Here we present the computations of the Willmore energy of catenoid spheres omitted in Lemma~\ref{lem_catenoidSphereShrinking}.
\begin{lem}\label{lem_catenoidSpheresShrinkingAppendix}
	There exist constants $ \Lambda,C>0 $ such that for all $ \lambda\geq \Lambda $ and $ \delta\in\left(0,\frac12\right) $
	\begin{enumerate}[label=\roman*)]
		\item $ W^{glue}_{\alpha}(\lambda,\delta) \leq C\frac{1}{\delta\lambda^2}, $ \label{lem_catenoidSpheresShrinkingAppendix_alpha}
		\item $ W^{glue}_{\beta}(\lambda,\delta) \leq C \frac{\delta}{\lambda^2}, $ \label{lem_catenoidSpheresShrinkingAppendix_beta}
		\item $ \partial_\lambda W^{glue}_{\beta}(\lambda,\delta) \geq -  C \frac{\delta}{\lambda^3}. $ \label{lem_catenoidSpheresShrinkingAppendix_monotonicity}
	\end{enumerate}
\end{lem}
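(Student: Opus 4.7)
The plan is to establish all three estimates by directly exploiting the explicit expressions for the catenoid and sphere graphs, observing that in the relevant regime all quantities have scale $\lambda^{-1}$. First I would fix polar coordinates $(r,\varphi)$ on the annulus $A=\Ann[1-\delta,1+\delta]$ and record the graph parametrizations. From the catenoid \eqref{eq_catenoid}, inverting $x=\cosh(\lambda t)/\lambda$ gives $u_1(r)=[\cosh^{-1}(\lambda r)-\cosh^{-1}(\lambda)]/\lambda$, with
\[
u_1'(r)=\frac{1}{\sqrt{\lambda^2 r^2-1}},\qquad u_1''(r)=-\frac{\lambda^2 r}{(\lambda^2 r^2-1)^{3/2}}.
\]
For the sphere of radius $\lambda$, I get $u_2^{\beta}(r)=\sqrt{\lambda^2-1}-\sqrt{\lambda^2-r^2}$ and $u_2^{\alpha}(r)=\sqrt{\lambda^2-r^2}-\sqrt{\lambda^2-1}$, with explicit first and second derivatives. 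A short computation shows that for $\lambda\ge\Lambda$ large enough and $r\in[1-\delta,1+\delta]\subset[\tfrac12,\tfrac32]$, both $u_1,u_2^{\alpha/\beta}$ and their first two $r$-derivatives are uniformly bounded by $C/\lambda$.

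The essential difference between the two types is the order of contact at $r=1$. For type $\beta$ one has $(u_2-u_1)(1)=0$, $(u_2'-u_1')(1)=0$, and $(u_2''-u_1'')(1)=2\lambda^2/(\lambda^2-1)^{3/2}=O(1/\lambda)$, so Taylor expansion in $r-1$ yields
\[
|u_2-u_1|\le C\delta^2/\lambda,\qquad |u_2'-u_1'|\le C\delta/\lambda,\qquad |u_2''-u_1''|\le C/\lambda
\]
throughout $A$. For type $\alpha$ the jump in the first derivative is already of order $1/\lambda$, so only $|u_2-u_1|\le C\delta/\lambda$, $|u_2'-u_1'|\le C/\lambda$, $|u_2''-u_1''|\le C/\lambda$. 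Next I would plug these into the formula for the glued function $\tilde u=u_1+\phi(r)(u_2-u_1)$ (differentiated as in the proof of Lemma~\ref{lem_gluingLemma}) and use $|\phi|\le1$, $|\phi'|\le M/\delta$, $|\phi''|\le M/\delta^2$ to obtain
\[
|\tilde u|+|\partial_r\tilde u|+|\partial_{rr}\tilde u|\le C/\lambda\quad(\text{type }\beta),\qquad |\partial_{rr}\tilde u|\le C/(\delta\lambda)\quad(\text{type }\alpha),
\]
with the other norms still $O(1/\lambda)$ in the $\alpha$ case. The cancellations of the $\delta^{-2}$ factors in the $\beta$ case come exactly from the first-order tangency at $r=1$.

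Now I would translate these bounds to the Willmore integrand. For rotationally symmetric graphs,
\[
H^2\sqrt{\det G}=\frac{r\sqrt{1+(u')^2}}{4}\left(\frac{u''}{(1+(u')^2)^{3/2}}+\frac{u'}{r\sqrt{1+(u')^2}}\right)^{2},
\]
and with $|u'|,|u''|=o(1)$ this is pointwise $\le C\bigl((u'')^2+(u'/r)^2\bigr)$. In the $\beta$ case this gives $H^2\sqrt{\det G}\le C/\lambda^2$, and integrating over $A$ (area $\le C\delta$) produces $W^{glue}_{\beta}\le C\delta/\lambda^2$, proving (ii). In the $\alpha$ case the integrand is bounded by $C/(\delta^2\lambda^2)$, giving $W^{glue}_{\alpha}\le C\cdot\delta/(\delta^2\lambda^2)=C/(\delta\lambda^2)$, which is (i).

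For (iii), I would differentiate $W^{glue}_{\beta}$ under the integral sign with respect to $\lambda$. Since the integrand $\Psi(u',u'',r)=H^2\sqrt{\det G}$ is smooth in its arguments and $u_1,u_2$ depend smoothly on $\lambda$, the chain rule gives
\[
\partial_\lambda W^{glue}_{\beta}=\int_A\bigl(\partial_{u'}\Psi\cdot\partial_\lambda\tilde u'+\partial_{u''}\Psi\cdot\partial_\lambda\tilde u''\bigr)\,dA.
\]
A direct computation shows $|\partial_\lambda u_i|,|\partial_\lambda u_i'|,|\partial_\lambda u_i''|\le C/\lambda^2$ and that at $r=1$ the first-order tangency persists under $\partial_\lambda$, so $\partial_\lambda(u_2-u_1)$ and its first two $r$-derivatives satisfy the same Taylor-bounds as $(u_2-u_1)$, divided by an extra power of $\lambda$. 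Consequently $|\partial_\lambda\tilde u'|+|\partial_\lambda\tilde u''|\le C/\lambda^2$. Combining with $|\partial_{u'}\Psi|+|\partial_{u''}\Psi|\le C/\lambda$ (from the size of $u'$ and $u''$), the integrand is $\le C/\lambda^3$, and integration over $A$ gives $|\partial_\lambda W^{glue}_{\beta}|\le C\delta/\lambda^3$, which implies (iii).

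The main technical obstacle will be the bookkeeping in step~(iii): one must verify that the first-order tangency which makes the $\beta$ estimate so much better than the $\alpha$ estimate is preserved under $\partial_\lambda$, and that no derivative of the gluing function $\phi$ picks up a divergent factor. I expect this to reduce to the fact that $\partial_\lambda u_1$ and $\partial_\lambda u_2^{\beta}$ are themselves smooth functions of $r$ that agree to first order at $r=1$ with matching values and slopes of size $O(1/\lambda^2)$, a verification that is a direct but slightly lengthy computation from the explicit formulas above.
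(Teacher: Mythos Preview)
Your proposal is correct and follows essentially the same route as the paper: explicit formulas for the catenoid and sphere graphs, Taylor expansion at $r=1$ exploiting the first-order tangency in the $\beta$ case (and its absence in the $\alpha$ case), the rotationally symmetric Willmore integrand, and the bounds $|\phi'|\le M/\delta$, $|\phi''|\le M/\delta^2$. The only minor difference is in part~(iii): the paper writes the integrand as $J(t)/\bigl(t(1+h'^2)^{5/2}\bigr)$ with $J\ge 0$, observes that $\partial_\lambda h'<0$ (so the denominator is monotone), and uses the elementary inequality $(f/g)'\ge f'/g$ for $f\ge 0$, $g>0$, $g'\le 0$ before bounding $|\partial_\lambda J|$; you instead bound the full $\lambda$-derivative in absolute value via the chain rule, which is slightly more direct and in fact yields the two-sided estimate $|\partial_\lambda W^{glue}_\beta|\le C\delta/\lambda^3$.
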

\begin{proof}
	For a general surface of revolution $ f(t,\varphi) = (r(t)\cos(\varphi),r(t)\sin(\varphi),h(t)) $ where we set $ r(t)=t $ and $ h:I\to\R $ we obtain formulas for its mean curvature $ H $, the mean of the principal curvatures $ \kappa_1 $ and $ \kappa_2 $ and its first fundamental form $ G $. With $ l(t):=\sqrt{1+h'(t)^2} $ we have
	\begin{align*}
		H(t,\varphi) &= \frac12 (\kappa_1(t,\varphi) + \kappa_2(t,\varphi)) \\
		&= \frac12 \left(\frac{h''(t)}{l(t)^3} + \frac{h'(t)}{t l(t)}\right)\\
		&= \frac{t h''(t) + l(t)^2 h'(t)}{2t l(t)^3},\\
		\sqrt{\det G(t,\varphi)}&= \sqrt{l(t)^2 t^2} = t l(t),\\
		H(t,\varphi)^2 \sqrt{\det G(t,\varphi)} &= \frac{(th''(t) + l(t)^2 h'(t))^2}{4tl(t)^5} = \frac{\left( th''(t) + h'(t) + h'(t)^3 \right)^2}{4 t (1+h'(t)^2)^\frac52}.
	\end{align*}
	We first lay out the details of a catenoid sphere of type $ \beta $ and omit the index $ \beta $. Let $ \lambda>1 $ and $ \delta\in\left(0,\frac12\right) $. Let $ u:(1/\lambda,\infty)\to\R $ and $ v:[0,R)\to\R $ be the graph parametrizations of the generating curves $ (t,u(t)) $ of the catenoid and $ (t,v(t)) $ of the sphere in the catenoid sphere construction, respectively (see Definition~\ref{def_CatSph}). We have the catenoid
	\begin{align*}
		u(t):&=\frac{\arccosh(\lambda t) - \arccosh(\lambda)}{\lambda},\\
		u'(t) &= \frac{1}{\sqrt{\lambda^2 t^2 -1}},\\
		u''(t) &= -\frac{\lambda^2 t}{(\lambda^2 t^2 -1)^{\frac32}},\\
		u'''(t) &= \frac{\lambda^2 (2\lambda^2 t^2 +1)}{(\lambda^2 t^2 -1)^{\frac52}},\\
		\partial_\lambda u(t) &= \lambda^{-2}\left( \arccosh(\lambda) - \arccosh(\lambda t) + \lambda t \left(\lambda^2 t^2 -1 \right)^{-\frac12} - \lambda \left(\lambda^2 -1\right)^{-\frac12} \right),\\
		\partial_\lambda u'(t) &= -\frac{\lambda t^2}{(\lambda^2 t^2 -1)^{\frac32}},\\
		\partial_\lambda u''(t) &= \frac{\lambda t (2+ \lambda^2 t^2)}{(\lambda^2 t^2 -1)^{\frac52}},\\
		\partial_\lambda u'''(t) &= -\frac{\lambda (2\lambda^4 t^4 + 11\lambda^2 t^2 +2)}{(\lambda^2 t^2 -1)^{\frac72}},
	\end{align*}
	and the sphere 
	\begin{align*}
		v(t):&= \sqrt{\lambda^2 - 1} - \sqrt{\lambda^2-t^2},\\
		v'(t)&= \frac{t}{\sqrt{\lambda^2-t^2}},\\
		v''(t)&= \frac{\lambda^2}{(\lambda^2-t^2)^{\frac32}},\\
		v'''(t)&= \frac{3\lambda^2 t}{(\lambda^2-t^2)^{\frac{5}{2}}},\\
		\partial_\lambda v(t) &=\frac{\lambda}{\sqrt{\lambda^2-1}} - \frac{\lambda}{\sqrt{\lambda^2-t^2}},\\
		\partial_\lambda v'(t) &= -\frac{\lambda t}{(\lambda^2-t^2)^{\frac32}},\\
		\partial_\lambda v''(t) &= -\frac{\lambda^3 +2 \lambda t^2}{(\lambda^2-t^2)^{\frac52}},\\
		\partial_\lambda v'''(t) &= -\frac{3\lambda t (3\lambda^2 + 2 t^2)}{(\lambda^2-t^2)^{\frac72}},
	\end{align*}
	In the gluing region $ I:=[1-\delta,1+\delta] $ the generating curve $ h:I\to\R^2 $ that interpolates between the catenoid and the sphere is defined and satisfies the following.
	\begin{align*}
		h:&=u + \phi (v-u),\\ 
		h'&=u' + \phi (v'-u') + \phi' (v-u),\\
		h''&= u'' + \phi(v''-u'') + 2\phi' (v'-u') + \phi'' (v-u).\\
		\partial_\lambda h'&= \partial_\lambda u' + \phi \partial_\lambda (v'-u') + \phi' \partial_\lambda(v-u),\\
		\partial_\lambda h''&= \partial_\lambda u'' + \phi \partial_\lambda(v''-u'') + 2\phi' \partial_\lambda(v'-u') + \phi'' \partial_\lambda(v-u).\\
	\end{align*}
	We note that
	\begin{align}\label{eq_hTangentDec}
		\partial_\lambda h' = (1-\phi) \underset{<0}{\underbrace{\partial_\lambda u'}} + \phi \underset{<0}{\underbrace{\partial_\lambda v'}} + \phi' \underset{\leq 0}{\underbrace{\partial_\lambda \left( v - u \right)}} <0,
	\end{align}
	where the last term $ w(t):=\partial_\lambda (v(t)-u(t)) $ is nonpositive due to the fact that $ w(1)=0 $, $ w'(1)=0 $ and $ w''<0 $ which can be directly deduced from the explicit formulas.
	Furthermore, we can infer the asymptotic behavior of the explicit functions by elementary estimations, e.g.\ we get for all $ \lambda\geq 3 $
	\begin{align}\label{eq_WglueElementaryEst}
		\abs{v''(t)}\leq \frac{\lambda^2}{\left(\lambda^2-2^2\right)^{\frac32}} \leq \frac{\lambda^2}{\left( \lambda^2/2 \right)^{\frac32}} \leq \frac{8}{\lambda}.
	\end{align}
	We gather some estimations. In the following we write $ A(\lambda,\delta,t)\lesssim B(\lambda,\delta,t) $ if there exist constants $ \Lambda>0 $ and $ C>0 $ independent of $ \lambda $, $ \delta $ and $ t $ such that $ A(\lambda,\delta,t)\leq C B(\lambda,\delta,t) $ for all $ \lambda\geq\Lambda $, $ \delta\in(0,\frac12) $, $ t\in [1-\delta,1+\delta] $. 
	Using Taylor expansion at $ t=1 $ with the remainder term in Lagrange form and elementary estimations as in \eqref{eq_WglueElementaryEst}, we obtain the following.
	\begin{align*}
		\abs{v''(t)-u''(t)} &\leq \abs{v''(1) - u''(1)} + \delta \max_{\xi\in I} \abs{v'''(\xi)-u'''(\xi)}\lesssim \frac1\lambda + \frac\delta\lambda \\
		&\lesssim \frac1\lambda,\\
		\abs{v'(u) - u'(t)} &\leq \underset{=0}{\underbrace{\abs{v'(1) - u'(1)}}} + \delta \max_{\xi\in I} \abs{v''(\xi)-u''(\xi)}\\
		&\lesssim \frac\delta\lambda,\\
		\abs{v(u) - u(t)} &\leq \underset{=0}{\underbrace{\abs{v(1) - u(1)}}} + \delta \underset{=0}{\underbrace{\abs{v'(1) - u'(1)}}} + \frac{\delta^2}{2} \max_{\xi\in I} \abs{v''(\xi)-u''(\xi)} \\ 
		&\lesssim \frac{\delta^2}{\lambda}.\\
		\abs{\partial_\lambda v''(t)- \partial_\lambda u''(t)} &\leq \abs{\partial_\lambda v''(1) - \partial_\lambda u''(1)} + \delta \max_{\xi\in I} \abs{\partial_\lambda v'''(\xi)- \partial_\lambda u'''(\xi)}\\
		&\lesssim \frac{1}{\lambda^2} + \frac{\delta}{\lambda^2}
		\lesssim \frac{1}{\lambda^2},\\
		\abs{\partial_\lambda v'(u) - \partial_\lambda u'(t)} &\leq \underset{=0}{\underbrace{\abs{\partial_\lambda v'(1) - \partial_\lambda u'(1)}}} + \delta \max_{\xi\in I} \abs{\partial_\lambda v''(\xi)- \partial_\lambda u''(\xi)}\\
		&\lesssim \frac{\delta}{\lambda^2},\\
		\abs{\partial_\lambda v(u) - \partial_\lambda u(t)} &\leq \underset{=0}{\underbrace{\abs{\partial_\lambda v(1) - \partial_\lambda u(1)}}} + \delta \underset{=0}{\underbrace{\abs{\partial_\lambda v'(1) - \partial_\lambda u'(1)}}}\\ &\quad + \frac{\delta^2}{2} \max_{\xi\in I} \abs{\partial_\lambda v''(\xi)- \partial_\lambda u''(\xi)}\\ 
		&\lesssim \frac{\delta^2}{\lambda^2}.\\
	\end{align*}
	We obtain for the interpolating function using $ \abs{\phi}\leq 1 $, $ \abs{\phi'}\lesssim \frac{1}{\delta} $ and $ \abs{\phi''}\lesssim \frac{1}{\delta^2}, $
	\begin{align*}
		\abs{h'(t)} &\lesssim \abs{u'(t)} + \abs{v'(t)-u'(t)} + \frac1\delta \abs{v(t)-u(t)} \lesssim \frac1\lambda + \frac1\lambda + \frac{\delta}{\lambda} \lesssim \frac1\lambda,\\
		\abs{h''(t)} &\lesssim \abs{u''(t)} + \abs{v''(t)-u''(t)} + \frac1\delta \abs{v'(t)-u'(t)} + \frac{1}{\delta^2} \abs{v(t)-u(t)} \lesssim \frac1\lambda,\\
		\abs{\partial_\lambda h'(t)} &\lesssim \abs{\partial_\lambda u'(t)} + \abs{\partial_\lambda v'(t)-\partial_\lambda u'(t)} + \frac1\delta \abs{\partial_\lambda v(t)-\partial_\lambda u(t)}\lesssim \frac{1}{\lambda^2},\\
		\abs{\partial_\lambda h''(t)} &\lesssim \abs{\partial_\lambda u''(t)} + \abs{\partial_\lambda v''(t)-\partial_\lambda u''(t)} + \frac1\delta \abs{\partial_\lambda v'(t)-\partial_\lambda u'(t)}\\ 
		\phantom{\abs{\partial_\lambda h''(t)}}&\phantom{\lesssim \abs{\partial_\lambda u''(t)}} + \frac{1}{\delta^2} \abs{\partial_\lambda v(t)-\partial_\lambda u(t)} \lesssim \frac{1}{\lambda^2},\\
	\end{align*}
	We plug the above estimations into the formula for the gluing energy
	\begin{align}
		W^{glue}(\lambda,\delta) &= \int_{1-\delta}^{1+\delta} \int_{0}^{2\pi} H(t,\varphi)^2 \sqrt{\det G(t,\varphi)}\ d\varphi dt \notag\\
		&=2\pi \int_{1-\delta}^{1+\delta} \frac{\left( th''(t) + h'(t) + h'(t)^3 \right)^2}{4 t (1+h'(t)^2)^\frac52}\ dt \label{eq_Wglue}\\
		&\leq \pi \delta \max_{t\in I}  \left( th''(t) + h'(t) + h'(t)^3 \right)^2,\notag
	\end{align}
	and obtain
	\begin{align}
		W^{glue}_\beta(\lambda,\delta) & \lesssim \delta \left(\frac{1}{\lambda}+ \frac{1}{\lambda} + \frac{1}{\lambda^3}\right)^2 \lesssim \frac{\delta}{\lambda^2},\label{eq_WglueEstimated}
	\end{align}
	which is \ref{lem_catenoidSpheresShrinkingAppendix_beta}.
	
	To prove \ref{lem_catenoidSpheresShrinkingAppendix_alpha} for type $ \alpha $ we note the few differences from the computations above. Let $ u_\alpha,v_\alpha,h_\alpha $ denote the corresponding catenoid, sphere and interpolating curve of the catenoid sphere of type $ \alpha $. We have $ u_\alpha=u $ and $ v_\alpha=-v $. The only situation where the opposite sign of $ v $ weakens the estimations is where $ v'(1)-u'(1)=0 $ for type $ \beta $ is used. This now becomes
	\begin{align*}
		\abs{v_\alpha'(1)-u_\alpha'(1)}\lesssim \frac1\lambda.
	\end{align*}
	which weakens the following estimations to
	\begin{align*}
		\abs{v_\alpha'-u_\alpha'}&\lesssim \frac1\lambda,\\
		\abs{v_\alpha-u_\alpha}&\lesssim \frac{\delta}{\lambda},\\
		\abs{h'_\alpha}&\lesssim \frac1\lambda,\\
		\abs{h''_\alpha}&\lesssim \frac{1}{\delta \lambda},
	\end{align*}
	and instead of \eqref{eq_WglueEstimated}, we obtain
	$$ W_\alpha^{glue}(\lambda,\delta)\lesssim \delta \left( \frac{1}{\delta \lambda} + \frac1\lambda + \frac{1}{\lambda^3} \right)^2 \lesssim \frac{1}{\delta \lambda^2}, $$
	which is \ref{lem_catenoidSpheresShrinkingAppendix_alpha}.
	
	It remains to show \ref{lem_catenoidSpheresShrinkingAppendix_monotonicity}.
	Writing $$ J(t):=\left( th''(t) + h'(t) + h'(t)^3 \right)^2, $$ 
	and using \eqref{eq_Wglue}, we obtain
	\begin{align*}
		\partial_\lambda W^{glue}(\lambda,\delta)&= \frac{\pi}{2} \int_{1-\delta}^{1+\delta} \partial_\lambda \left( \frac{J(t)}{t (1+h'(t)^2)^\frac52} \right) dt.
	\end{align*}
	We may use the fact that for functions $ f:\R\to [0,\infty) $, $ g:\R\to (0,\infty) $ with $ g'\leq 0 $ we have
	$$ \left(\frac{f}{g}\right)'=\frac{f'g - fg'}{g^2}\geq \frac{f'}{g}. $$ We have $ \partial_\lambda h' <0 $ by \eqref{eq_hTangentDec} which implies $ \partial_\lambda \left( t \left(1+h'(t)^2\right)^{\frac52} \right) < 0 $ and thus
	\begin{align*}
		\partial_\lambda W^{glue}(\lambda,\delta) &\geq \frac{\pi}{2} \int_{1-\delta}^{1+\delta} \frac{\partial_\lambda J(t)}{ t (1+h'(t)^2)^\frac52}dt.
	\end{align*}
	A case distinction on the sign of $ \partial_\lambda J(t) $ implies with $ t\geq \frac12 $,
	\begin{align*}
		\frac{\partial_\lambda J(t)}{ t (1+h'(t)^2)^\frac52} \geq \min\left\{ 0 , 2 \partial_\lambda J(t) \right\}.
	\end{align*}
	We estimate further
	\begin{align*}
		\abs{\partial_\lambda J(t)}=&2\abs{ \left( th''(t) + h'(t) + h'(t)^3 \right) \left( t \partial_\lambda h''(t) + \partial_\lambda h'(t) + 3h(t)^2\partial_\lambda h'(t) \right) }\\
		&\lesssim \left( \frac1\lambda + \frac{1}{\lambda} + \frac{1}{\lambda^3} \right) \left( \frac{1}{\lambda^2} + \frac{1}{\lambda^2} + \frac{1}{\lambda^4} \right) \lesssim  \frac{1}{\lambda^3},
	\end{align*}
	and obtain
	\begin{align*}
		\partial_\lambda W^{glue}(\lambda,\delta) &\gtrsim \int_{1-\delta}^{1+\delta} \min\left\{ 0, \partial_\lambda J(t) \right\} dt\\
		&\gtrsim \int_{1-\delta}^{1+\delta} -\frac{1}{\lambda^3} dt \\
		&\gtrsim - \frac{\delta}{\lambda^3}
	\end{align*}
	which is \ref{lem_catenoidSpheresShrinkingAppendix_monotonicity}.
\end{proof}

\begin{lem}\label{lem_uniformGraphRadius}
	For all $ A_{\max}>0 $, there exists a radius $ r(A_{\max})>0 $ such that all smooth immersions $ f:\Sigma\to\R^3 $ of a closed surface $ \Sigma $ with second fundamental form $$ \max_{p\in\Sigma}{\norm{A(p)}}\leq A_{\max} $$ are graphs over a ball $ B_r(0)\subseteq T_pf$ \com{$ B_r(0) $ w.r.t $ \norm{\cdot}_{g(p)} $}in the tangent space for all $ p\in\Sigma $ and their graph parametrizations $ u:B_r(0)\to \R $ satisfy 
	\begin{align*}
		\norm{D^2 u}_{C^0(B_r(0))}\leq 5 A_{max}.
	\end{align*}
\end{lem}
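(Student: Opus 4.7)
\medskip

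\noindent\textbf{Proof proposal.} The plan is to combine the curvature bound with a quantitative inverse function theorem to obtain a graph representation, and then to compute $D^2 u$ from the identity $u_{ij}=W h_{ij}$ in graph coordinates.

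First, I would fix $p\in\Sigma$ and arrange coordinates in $\R^3$ so that $f(p)=0$, $T_pf=\R^2\times\{0\}$ and the unit normal at $p$ is $\nu(p)=(0,0,1)$. Let $\pi:\R^3\to\R^2$ be the orthogonal projection. The Gauss map $\nu:\Sigma\to\S^2$ satisfies $\|d\nu_q\|_{\mathrm{op}}=\|A(q)\|\le A_{\max}$, so for every rectifiable curve $\gamma$ in $\Sigma$ of intrinsic length $L$ we have $|\nu(\gamma(L))-\nu(\gamma(0))|_{\R^3}\le A_{\max}L$. In particular, on the intrinsic geodesic ball $B^\Sigma_{\rho_0}(p)$ with $\rho_0=c/A_{\max}$ and $c$ a small absolute constant, the normal stays in a narrow cone around $\nu(p)$, i.e.\ $\langle\nu(q),\nu(p)\rangle\ge\tfrac{99}{100}$. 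Using the Gauss equation, the sectional curvature is bounded by $A_{\max}^2$, so standard Jacobi field estimates guarantee that $\exp_p$ is a smooth local diffeomorphism on the Euclidean ball $B_{\rho_0}(0)\subset T_p\Sigma$ (with $c$ adjusted).

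Next, I would consider $G:=\pi\circ f\circ\exp_p:B_{\rho_0}(0)\to\R^2$. By construction $G(0)=0$ and $dG(0)$ is an isometry (since $df_p|_{T_p\Sigma}$ is an isometry onto $T_pf$ and $\pi|_{T_pf}=\mathrm{id}$). Because $d\exp_p$ is close to the identity on this small ball and because the tangent planes of $f$ are close to $T_pf$ by the Gauss map control above, we can arrange $\|dG(x)-dG(0)\|_{\mathrm{op}}\le\tfrac12$ for $|x|\le\rho_0$. The quantitative inverse function theorem then implies that $G$ is a diffeomorphism of its preimage onto a Euclidean ball $B_r(0)\subset\R^2$ of radius $r=r(A_{\max})$. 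Setting $u:=z\circ f\circ\exp_p\circ G^{-1}$ (where $z$ is the third coordinate) yields a graph parametrization $u:B_r(0)\to\R$ with $u(0)=0$ and $\nabla u(0)=0$.

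Now I would bound $|\nabla u|$ on $B_r(0)$: at $\xi\in B_r(0)$, write $q=G^{-1}(\xi)$, let $\theta(\xi)$ be the angle between $\nu(\exp_p q)$ and $\nu(p)$, so that $\sin\theta(\xi)=|\nabla u(\xi)|/W(\xi)$ with $W=\sqrt{1+|\nabla u|^2}$. Step~1 gives $\sin\theta\le A_{\max}\cdot d_\Sigma(p,\exp_p q)\le 2A_{\max}|\xi|\le 2A_{\max} r$, and choosing $r$ small enough (e.g.\ $r\le 1/(100 A_{\max})$) ensures $|\nabla u|\le\tfrac{1}{10}$ throughout, hence $1\le W\le\sqrt{1.01}$.

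Finally, I would obtain the Hessian bound from the graph formula for the second fundamental form. In graph coordinates, $h_{ij}=u_{ij}/W$ and $\|A\|^2=g^{ik}g^{jl}h_{ij}h_{kl}\le A_{\max}^2$, where $g^{ij}=\delta^{ij}-u_iu_j/W^2$. For $|\nabla u|\le\tfrac{1}{10}$, the matrix $g^{-1}$ is within $\tfrac{1}{99}$ of the identity in operator norm, so
\begin{equation*}
\sum_{i,j}u_{ij}^2 \;=\; W^2\sum_{i,j}h_{ij}^2 \;\le\; W^2(1+\tfrac{1}{50})\,\|A\|^2 \;\le\; \tfrac{21}{20}\,A_{\max}^2.
\end{equation*}
Thus $\max_{i,j}|u_{ij}|\le\tfrac{11}{10}A_{\max}$, and summing the four partial derivatives $u_{11},u_{12},u_{21},u_{22}$ as in the paper's convention gives $\|D^2 u\|_{C^0(B_r(0))}\le 4\max_{i,j}|u_{ij}|\le 5 A_{\max}$, after possibly shrinking $r$ to tighten the constants. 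The main obstacle is Step~2: producing a quantitative lower bound on the graph radius $r$ that is uniform in $p\in\Sigma$ and depends only on $A_{\max}$. This requires careful use of Jacobi field comparison (which in turn uses the Gauss equation to translate $\|A\|\le A_{\max}$ into a sectional curvature bound) together with a quantitative inverse function theorem. The Hessian estimate in Step~4 is then a straightforward algebraic consequence.
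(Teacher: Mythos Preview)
Your argument is correct and follows the same strategy as the paper, but self-contained rather than by citation. The paper's proof is just two lines: it invokes Langer's compactness theorem to obtain, for any $\alpha\in(0,1)$, a uniform graph radius $r=r(\alpha,A_{\max})$ with $\|Du\|_{C^0(B_r)}\le\alpha$, and then quotes from Breuning the pointwise algebraic inequality $\|D^2u\|_{C^0}\le 4(1+\|Du\|_{C^0})^{3/2}\|A\|$; choosing $\alpha$ small makes the right-hand side $\le 5A_{\max}$. Your Steps~1--3 reproduce the content of Langer's theorem (Gauss map Lipschitz control, Jacobi field comparison for $\exp_p$, quantitative inverse function theorem), and your Step~4 reproves Breuning's inequality directly from the graph formula $h_{ij}=u_{ij}/W$ together with $g^{ij}\approx\delta^{ij}$. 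So no genuinely different idea is involved; you have simply unpacked the two citations.

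One minor remark: with the paper's convention $\|D^2u\|_{C^0}=\sum_{|\alpha|=2}\sup|D^\alpha u|$ there are three multi-indices of order two in $\R^2$, not four, so your final summation gives $3\max_{i,j}|u_{ij}|$ rather than $4\max_{i,j}|u_{ij}|$, which only improves your constant.
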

\begin{proof}
	By \cite[Theorem~2.4]{Langer_Comp}, for all $ \alpha\in(0,1) $ there exists a radius $ r(\alpha)>0 $ such that for all immersions $ f:\Sigma\to\R^3 $ with $ \norm{A}\leq A_{\max} $ and for all $ p\in\Sigma $ the graph parametrization $ u:B_r(0)\to\R^3 $ satisfies
	\begin{align*}
	 	\norm{Du}_{C^0(B_r(0))}\leq \alpha.
	\end{align*}
	Together with (see e.g.\ \cite[Proof of Lemma~2.2]{Breuning_Comp})
	\begin{align*}
	 	\norm{D^2u}_{C^0(B_r(0))}\leq 4\left(1+\norm{Du}_{C^0(B_r(0))}\right)^{\frac32} \norm{A},
	\end{align*}
	we obtain with appropriate choice of $ \alpha $
	\begin{align*}
		\norm{D^2u}_{C^0(B_r(0))} \leq 4\left(1+\alpha\right)^{\frac32} \norm{A} \leq 5 A_{max}.
	\end{align*}
\end{proof}

\section*{Acknowledgments}
The authors thank Karsten Große-Brauckmann, Robert Kusner and Fabian Rupp for useful discussions and helpful comments.
The authors were partially funded by the Deutsche Forschungsgemeinschaft (DFG, German Research Foundation) under the project number MA 7559/1-2.
The second author would like to thank the Hausdorff Institute\footnote{funded by the Deutsche Forschungsgemeinschaft (DFG, German Research Foundation) under Germany's Excellence Strategy – EXC-2047/1 – 390685813} in Bonn for a welcoming and productive stay, where part of this work was completed. This work is part of the second author's PhD thesis.

\phantomsection\addcontentsline{toc}{section}{References}  
\bibliography{d:/Jona/Documents/Nextcloud/MyStuff/MyLit/000_bibs/total.bib}	
\end{document}